\newtheorem{lem}{Lemma}[section]
\newtheorem{theo}[lem]{Theorem}
\newtheorem{cor}[lem]{Corollary}
\newtheorem{rem}[lem]{Remark}
\def\bs{\boldsymbol}
\def\mr{\mathring}
\def\ol{\overline}
\def\To{\longrightarrow}
\def\incl{\hookrightarrow}
\DeclareMathOperator{\reals}{\mathbb{R}}
\DeclareMathOperator{\nat}{\mathbb{N}}
\def\ga{\Gamma}
\def\gat{\ga_{\!t}}
\def\gan{\ga_{\!n}}
\def\om{\Omega}
\def\rt{\reals^{3}}
\DeclareMathOperator{\RM}{\mathbb{RM}}
\DeclareMathOperator{\mcB}{\mathcal{B}}
\DeclareMathOperator{\mcH}{\mathcal{H}}
\DeclareMathOperator{\sfL}{\mathsf{L}}
\DeclareMathOperator{\sfH}{\mathsf{H}}
\DeclareMathOperator{\mbH}{\bs{\mathsf{H}}}
\DeclareMathOperator{\sfC}{\mathsf{C}}
\newcommand{\Harm}[2]{\mcH^{#1}_{\mathsf{#2}}}
\renewcommand{\L}[2]{\sfL^{#1}_{#2}}
\renewcommand{\H}[2]{\sfH^{#1}_{#2}}
\newcommand{\bH}[2]{\mbH^{#1}_{#2}}
\newcommand{\C}[2]{\sfC^{#1}_{#2}}
\newcommand{\B}[2]{\mcB^{#1}_{#2}}
\newcommand{\eps}{\varepsilon}
\DeclareMathOperator{\Lin}{Lin}
\DeclareMathOperator{\A}{A}
\DeclareMathOperator{\p}{\partial}
\DeclareMathOperator{\id}{id}
\DeclareMathOperator{\sym}{sym}
\DeclareMathOperator{\skw}{skw}
\DeclareMathOperator{\tr}{tr}
\DeclareMathOperator{\dev}{dev}
\DeclareMathOperator{\spn}{spn}
\DeclareMathOperator{\ed}{d}
\DeclareMathOperator{\grad}{grad}
\DeclareMathOperator{\rot}{rot}
\DeclareMathOperator{\divergence}{div}
\def\div{\divergence}
\DeclareMathOperator{\Grad}{Grad}
\DeclareMathOperator{\Rot}{Rot}
\DeclareMathOperator{\Div}{Div}
\DeclareMathOperator{\symGrad}{symGrad}
\newcommand{\symGradgat}{\symGrad_{\gat}}
\newcommand{\symGradgan}{\symGrad_{\gan}}
\newcommand{\symGradgatk}{\symGrad_{\gat}^{k}}
\newcommand{\symGradgank}{\symGrad_{\gan}^{k}}
\newcommand{\bsymGradgan}{\bs\symGrad_{\gan}}
\newcommand{\bsymGradgatk}{\bs\symGrad_{\gat}^{k}}
\newcommand{\bsymGradgank}{\bs\symGrad_{\gan}^{k}}
\newcommand{\rsymGradgat}{\mr\symGrad_{\gat}}
\newcommand{\symGradgats}{\symGrad_{\gat}^{*}}
\newcommand{\rsymGradgats}{\mr\symGrad{}_{\gat}^{*}}
\newcommand{\DivS}{\Div_{\S}}
\newcommand{\DivSgat}{\Div_{\S,\gat}}
\newcommand{\DivSgan}{\Div_{\S,\gan}}
\newcommand{\DivSgatk}{\Div_{\S,\gat}^{k}}
\newcommand{\DivSgank}{\Div_{\S,\gan}^{k}}
\newcommand{\DivSgatkmo}{\Div_{\S,\gat}^{k-1}}
\newcommand{\DivSgankmo}{\Div_{\S,\gan}^{k-1}}
\newcommand{\bDivSgan}{\bs\Div_{\S,\gan}}
\newcommand{\bDivSgatk}{\bs\Div_{\S,\gat}^{k}}
\newcommand{\bDivSgank}{\bs\Div_{\S,\gan}^{k}}
\newcommand{\bDivSgankmo}{\bs\Div_{\S,\gan}^{k-1}}
\newcommand{\rDivSgat}{\mr\Div_{\S,\gat}}
\newcommand{\DivSgats}{\Div_{\S,\gat}^{*}}
\newcommand{\rDivSgats}{\mr\Div{}_{\S,\gat}^{*}}
\DeclareMathOperator{\RotRot}{RotRot}
\newcommand{\RotRott}{\RotRot^{\top}}
\newcommand{\RotRottS}{\RotRot_{\S}^{\!\top}}
\newcommand{\RotRottSgat}{\RotRot_{\S,\gat}^{\!\top}}
\newcommand{\RotRottSgan}{\RotRot_{\S,\gan}^{\!\top}}
\newcommand{\RotRottSgatk}{\RotRot_{\S,\gat}^{\!\top\!,k}}
\newcommand{\RotRottSgatkmo}{\RotRot_{\S,\gat}^{\!\top\!,k-1}}
\newcommand{\RotRottSgank}{\RotRot_{\S,\gan}^{\!\top\!,k}}
\newcommand{\RotRottSgatkkmo}{\RotRot_{\S,\gat}^{\!\top\!,k,k-1}}
\newcommand{\RotRottSgankkmo}{\RotRot_{\S,\gan}^{\!\top\!,k,k-1}}
\newcommand{\RotRottSgatkpok}{\RotRot_{\S,\gat}^{\!\top\!,k+1,k}}
\newcommand{\RotRottSgankpok}{\RotRot_{\S,\gan}^{\!\top\!,k+1,k}}
\newcommand{\bRotRottSgan}{\bs\RotRot_{\S,\gan}^{\!\top}}
\newcommand{\bRotRottSgatk}{\bs\RotRot_{\S,\gat}^{\!\top\!,k}}
\newcommand{\bRotRottSgank}{\bs\RotRot_{\S,\gan}^{\!\top\!,k}}
\newcommand{\bRotRottSgatkkmo}{\bs\RotRot_{\S,\gat}^{\!\top\!,k,k-1}}
\newcommand{\bRotRottSgankkmo}{\bs\RotRot_{\S,\gan}^{\!\top\!,k,k-1}}
\newcommand{\rRotRottSgat}{\mr\RotRot{}_{\S,\gat}^{\!\top}}
\renewcommand{\S}{\mathbb{S}}
\newcommand{\PotP}{\mathcal{P}}
\newcommand{\PotQ}{\mathcal{Q}}
\newcommand{\PotN}{\mathcal{N}}
\newcommand{\norm}[1]{|#1|}
\newcommand{\scp}[2]{\langle#1,#2\rangle}
\title[Hilbert Complexes with Mixed Boundary Conditions -- Part 2: Elasticity Complex]
{Hilbert Complexes with Mixed Boundary Conditions\\
Part 2: Elasticity Complex}
\author{Dirk Pauly}
\author{Michael Schomburg}
\address{Fakult\"at f\"ur Mathematik, Universit\"at Duisburg-Essen, Germany}
\email[Dirk Pauly]{dirk.pauly@uni-due.de}
\email[Michael Schomburg]{michael.schomburg@uni-due.de}
\keywords{regular potentials, regular decompositions, compact embeddings,
Hilbert complexes, Mixed Boundary Conditions, elasticity complex}
\subjclass{}
\date{\today; {\it Corresponding Author}: Dirk Pauly}
\thanks{}
\begin{document}

\def\titlerepude{\sf Hilbert Complexes with Mixed Boundary Conditions:\\
Regular Decompositions, Compact Embeddings,\\ 
and Functional Analysis ToolBox\\
Part 2: Elasticity Complex}
\def\authorrepude{Dirk Pauly \& Michael Schomburg}
\def\daterepdue{\today}
\def\reportudemathyesno{no}
\def\reportudemathnumber{SM-UDE-827}
\def\reportudemathyear{2021}
\def\reportudematheingang{\daterepdue}
\newcommand{\preprintudemath}[5]{
\thispagestyle{empty}
\begin{center}\normalsize SCHRIFTENREIHE DER FAKULT\"AT F\"UR MATHEMATIK\end{center}
\vspace*{5mm}
\begin{center}#1\end{center}
\vspace*{5mm}
\begin{center}by\end{center}
\vspace*{0mm}
\begin{center}#2\end{center}
\vspace*{5mm}
\normalsize 
\begin{center}#3\hspace{69mm}#4\end{center}
\newpage
\thispagestyle{empty}
\vspace*{210mm}
Received: #5
\newpage
\addtocounter{page}{-2}
\normalsize
}
\ifthenelse{\equal{\reportudemathyesno}{yes}}
{\preprintudemath{\titlerepude}{\authorrepude}{\reportudemathnumber}{\reportudemathyear}{\reportudematheingang}}
{}


\begin{abstract}
We show that the elasticity Hilbert complex
with mixed boundary conditions on bounded strong Lipschitz domains
is closed and compact. The crucial results are compact embeddings 
which follow by abstract arguments using functional analysis
together with particular regular decompositions.
Higher Sobolev order results are proved as well.
This paper extends recent results on the de Rham Hilbert complex 
with mixed boundary conditions from \cite{PS2021a}
and recent results on the elasticity Hilbert complex 
with empty or full boundary conditions from \cite{PZ2020b}.
\end{abstract}


\maketitle
\setcounter{tocdepth}{3}
{
\tableofcontents}


\section{Introduction}
\label{sec:intro}%

In this paper we prove regular decompositions and resulting compact embeddings for 
the \emph{elasticity complex}
\begin{equation*}
\def\arrowlength{5ex}
\def\arrowdistance{0}
\begin{tikzcd}[column sep=\arrowlength]
\cdots 
\arrow[r, rightarrow, shift left=\arrowdistance, "\cdots"] 
& 
\L{2}{}(\om) 
\ar[r, rightarrow, shift left=\arrowdistance, "\symGrad"] 
& 
[2.5em]
\L{2}{\S}(\om)
\arrow[r, rightarrow, shift left=\arrowdistance, "\RotRottS"] 
& 
[2.5em]
\L{2}{\S}(\om)
\arrow[r, rightarrow, shift left=\arrowdistance, "\DivS"] 
& 
[1em]
\L{2}{}(\om)
\arrow[r, rightarrow, shift left=\arrowdistance, "\cdots"] 
&
\cdots.
\end{tikzcd}
\end{equation*}
This extends the corresponding results from \cite{PS2021a}
for the de Rham complex
\begin{equation*}
\def\arrowlength{5ex}
\def\arrowdistance{0}
\begin{tikzcd}[column sep=\arrowlength]
\cdots 
\arrow[r, rightarrow, shift left=\arrowdistance, "\cdots"] 
& 
\L{q-1,2}{}(\om) 
\ar[r, rightarrow, shift left=\arrowdistance, "\ed^{q-1}"] 
& 
[1em]
\L{q,2}{}(\om) 
\arrow[r, rightarrow, shift left=\arrowdistance, "\ed^{q}"] 
& 
\L{q+1,2}{}(\om) 
\arrow[r, rightarrow, shift left=\arrowdistance, "\cdots"] 
&
\cdots,
\end{tikzcd}
\end{equation*}
whose 3D-version for vector proxies is given by
\begin{equation*}
\def\arrowlength{5ex}
\def\arrowdistance{0}
\begin{tikzcd}[column sep=\arrowlength]
\cdots 
\arrow[r, rightarrow, shift left=\arrowdistance, "\cdots"] 
& 
\L{2}{}(\om) 
\ar[r, rightarrow, shift left=\arrowdistance, "\ed^{0}\widehat{=}\grad"] 
& 
[2.5em]
\L{2}{}(\om)
\arrow[r, rightarrow, shift left=\arrowdistance, "\ed^{1}\widehat{=}\rot"] 
& 
[2.5em]
\L{2}{}(\om)
\arrow[r, rightarrow, shift left=\arrowdistance, "\ed^{2}\widehat{=}\div"] 
& 
[2.5em]
\L{2}{}(\om)
\arrow[r, rightarrow, shift left=\arrowdistance, "\cdots"] 
&
\cdots.
\end{tikzcd}
\end{equation*}
We shall consider mixed boundary conditions
on a bounded strong Lipschitz domain $\om\subset\rt$. 

Like the de Rham complex, the elasticity complex has the geometric structure 
of a \emph{Hilbert complex}, i.e.,
\begin{equation*}
\def\arrowlength{5ex}
\def\arrowdistance{0}
\begin{tikzcd}[column sep=\arrowlength]
\cdots 
\arrow[r, rightarrow, shift left=\arrowdistance, "\cdots"] 
& 
\H{}{0} 
\ar[r, rightarrow, shift left=\arrowdistance, "\A_{0}"] 
& 
\H{}{1}
\arrow[r, rightarrow, shift left=\arrowdistance, "\A_{1}"] 
& 
\H{}{2}
\arrow[r, rightarrow, shift left=\arrowdistance, "\cdots"] 
&
\cdots,
\end{tikzcd}
\qquad
R(\A_{0})\subset N(\A_{1}),
\end{equation*}
where $\A_{0}$ and $\A_{1}$ are densely defined and closed (unbounded) linear operators
between Hilbert spaces $\H{}{\ell}$.
The corresponding \emph{domain Hilbert complex} is denoted by
\begin{equation*}
\def\arrowlength{5ex}
\def\arrowdistance{0}
\begin{tikzcd}[column sep=\arrowlength]
\cdots 
\arrow[r, rightarrow, shift left=\arrowdistance, "\cdots"] 
& 
D(\A_{0})
\ar[r, rightarrow, shift left=\arrowdistance, "\A_{0}"] 
& 
D(\A_{0})
\arrow[r, rightarrow, shift left=\arrowdistance, "\A_{1}"] 
& 
\H{}{2}
\arrow[r, rightarrow, shift left=\arrowdistance, "\cdots"] 
&
\cdots.
\end{tikzcd}
\end{equation*}

In fact, we show that the assumptions of \cite[Lemma 2.22]{PS2021a} hold,
which provides an elegant, abstract, and short way to prove the crucial compact embeddings
\begin{align}
\label{cptemb1}
D(\A_{1})\cap D(\A_{0}^{*})\incl\H{}{1}
\end{align}
for the elasticity Hilbert complex.
In principle, our general technique 
-- compact embeddings by regular decompositions and Rellich's selection theorem --
works for all Hilbert complexes known in the literature, see, e.g., 
\cite{AH2020a} for a comprehensive list of such Hilbert complexes.

Roughly speaking a regular decomposition has the form
$$D(\A_{1})=\H{+}{1}+\A_{0}\H{+}{0}$$
with regular subspaces $\H{+}{0}\subset D(\A_{0})$ and $\H{+}{1}\subset D(\A_{1})$
such that the embeddings $\H{+}{0}\incl\H{}{0}$ and $\H{+}{1}\incl\H{}{1}$
are compact. The compactness is typically and simply given by Rellich's selection theorem,
which justifies the notion ``regular''.
By applying $\A_{1}$ any regular decomposition implies regular potentials
$$R(\A_{1})=\A_{1}\H{+}{1}$$
by the complex property. 
The respective regular potential and decomposition operators 
\begin{align*}
\PotP_{\A_{1}}:R(\A_{1})&\to\H{+}{1},
&
\PotQ_{\A_{1}}^{1}:D(\A_{1})&\to\H{+}{1},
&
\PotQ_{\A_{1}}^{0}:D(\A_{1})&\to\H{+}{0}
\end{align*}
are bounded and satisfy $\A_{1}\PotP_{\A_{1}}=\id_{R(\A_{1})}$ as well as
$\id_{D(\A_{1})}=\PotQ_{\A_{1}}^{1}+\A_{0}\PotQ_{\A_{1}}^{0}$.

Note that \eqref{cptemb1} implies several important results related 
to the particular Hilbert complex by the so-called FA-ToolBox, 
such as closed ranges,
Friedrichs/Poincar\'e type estimates, Helmholtz typ decompositions,
and comprehensive solution theories,
cf.~\cite{P2017a,P2019b,P2019a,P2020a} and \cite{PZ2016a,PZ2020a,PZ2020b}.

For an historical overview on 
the compact embeddings \eqref{cptemb1} corresponding to the de Rham complex and Maxwell's equations,
i.e., Weck's or Weber-Weck-Picard's selection theorem,
see, e.g., the introductions in \cite{BPS2016a,NPW2015a},
the original papers \cite{W1974a,W1980a,P1984a,W1993a,J1997a,PWW2001a},
and the recent state of the art results 
for mixed boundary conditions and bounded weak Lipschitz domains in \cite{BPS2016a,BPS2018a,BPS2019a}.
Compact embeddings \eqref{cptemb1} corresponding to the biharmonic and the elasticity complex 
are given in \cite{PZ2020b} and \cite{PZ2016a,PZ2020a}, respectively.
Note that in the recent paper \cite{AH2020a}
similar results have been shown for no or full boundary conditions
using an alternative and more algebraic approach, 
the so-called Bernstein-Gelfand-Gelfand resolution (BGG). 

\section{Elasticity Complexes I}
\label{sec:ela1}%

Throughout this paper, let
$\om\subset\rt$ be a \emph{bounded strong Lipschitz domain}
with boundary $\ga$, decomposed into two parts $\gat$ and $\gan:=\ga\setminus\ol{\gat}$
with some \emph{relatively open and strong Lipschitz boundary part} $\gat\subset\ga$.

\subsection{Notations and Preliminaries}
\label{sec:not}%

We will strongly use the notations and results from 
our corresponding papers for the elasticity complex \cite{PZ2020b}
and for the de Rham complex \cite{PS2021a}.
In particular, we recall \cite[Section 2, Section 3]{PS2021a}
including the notion of \emph{extendable domains}.

We utilise the standard Sobolev spaces from \cite{PS2021a},
e.g., the usual Lebesgue and Sobolev spaces (scalar or tensor valued)
$\L{2}{}(\om)$ and $\H{k}{}(\om)$ with $k\in\nat_{0}$.
Boundary conditions are introduced in the \emph{strong sense} 
as closures of respective test fields, i.e.,
\begin{align*}
\H{k}{\gat}(\om)&:=\ol{\C{\infty}{\gat}(\om)}^{\H{k}{}(\om)},
\end{align*}
we well as in the \emph{weak sense} by
$$\bH{k}{\gat}(\om)
:=\big\{u\in\H{k}{}(\om):
\scp{\p^{\alpha}u}{\phi}_{\L{2}{}(\om)}=(-1)^{|\alpha|}\scp{u}{\p^{\alpha}\phi}_{\L{2}{}(\om)}
\quad\forall\,\phi\in\C{\infty}{\gan}(\om)\quad\forall\,|\alpha|\leq k\big\}.$$

\begin{lem}[{\cite[Lemma 3.2, Theorem 4.6]{PS2021a}}]
\label{lem:weakeqstrongderham}
$\bH{k}{\gat}(\om)=\H{k}{\gat}(\om)$, i.e.,
weak and strong boundary conditions coincide
for the standard Sobolev spaces with mixed boundary conditions.
\end{lem}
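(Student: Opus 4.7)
The plan is to prove the two inclusions separately. The easy direction $\H{k}{\gat}(\om)\subset\bH{k}{\gat}(\om)$ is a direct consequence of the definitions: given $u\in\H{k}{\gat}(\om)$ and an approximating sequence $u_{n}\in\C{\infty}{\gat}(\om)$, integration by parts against any test field $\phi\in\C{\infty}{\gan}(\om)$ produces no boundary terms -- the supports of $u_{n}$ and $\phi$ are disjoint from $\gat$ and $\gan$ respectively, hence disjoint from all of $\ga$ except possibly the interface $\overline{\gat}\cap\overline{\gan}$, where the test-field supports stay at positive distance. Passing $n\to\infty$ in $\L{2}{}(\om)$ yields the weak derivative identity defining $\bH{k}{\gat}(\om)$.

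The reverse inclusion $\bH{k}{\gat}(\om)\subset\H{k}{\gat}(\om)$ is the substantive step. I would follow a classical localize--extend--shift--mollify scheme adapted to the mixed setting. First, cover $\overline{\om}$ by finitely many open balls $B_{i}$ of three types: (a) $B_{i}\subset\om$ (interior); (b) $B_{i}\cap\overline{\gat}=\emptyset$ (touching only $\gan$); (c) $B_{i}$ centered on $\overline{\gat}$ and small enough that the strong Lipschitz hypothesis provides a bi-Lipschitz chart straightening $\om\cap B_{i}$ into a half-ball with $\gat\cap B_{i}$ flat. Using a subordinate smooth partition of unity $\{\varphi_{i}\}$ and the Leibniz rule -- which preserves the weak definition since differentiating $\varphi_{i} u$ against $\phi\in\C{\infty}{\gan}(\om)$ only redistributes derivatives onto $\varphi_{i}\phi\in\C{\infty}{\gan}(\om)$ -- it suffices to treat each localized piece $\varphi_{i}u$ separately.

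For pieces of type (a), standard interior mollification suffices. For type (b), the membership in $\bH{k}{\gat}(\om)$ carries no information beyond $\H{k}{}(\om)$ locally, but the support is already bounded away from $\gat$, so mollification with small enough radius produces smooth approximants in $\C{\infty}{\gat}(\om)$. The essential case is (c): after the bi-Lipschitz straightening, I would invoke the \emph{extendable domain} framework of \cite[Section~3]{PS2021a} to extend $\varphi_{i}u$ across $\gan$ without losing $\H{k}{}$-regularity, then apply a translation $u\mapsto u(\,\cdot\,+\tau\nu)$ in a direction $\nu$ pointing strictly into $\om$ away from the flattened piece of $\gat$; this produces a family $u_{\tau}$ converging in $\H{k}{}$ as $\tau\to 0$ whose supports have a positive gap from $\gat$. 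A subsequent mollification with radius $\ll\tau$ gives approximants in $\C{\infty}{\gat}(\om)$.

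The main obstacle, and the reason the weak-equals-strong equivalence is nontrivial for \emph{mixed} conditions, is precisely the geometry near the interface $\overline{\gat}\cap\overline{\gan}$, where one cannot simultaneously straighten both boundary parts and a naive extension across $\gan$ could destroy the weak condition along $\gat$. The extendable domain notion from \cite{PS2021a} is designed exactly to bypass this difficulty: it packages, for the standard Sobolev scale, the existence of a bounded $\H{k}{}$-extension operator across $\gan$ that is compatible with the test-field duality defining $\bH{k}{\gat}(\om)$, so that the shift-and-mollify trick closes the argument uniformly. Once all local pieces are handled, summing via the partition of unity gives a sequence in $\C{\infty}{\gat}(\om)$ converging to $u$ in $\H{k}{}(\om)$, which is the required membership $u\in\H{k}{\gat}(\om)$.
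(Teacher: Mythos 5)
First, note that the paper does not prove this lemma at all: it is quoted verbatim from \cite[Lemma 3.2, Theorem 4.6]{PS2021a} and used as an imported black box, so there is no in-paper proof to compare against. Your sketch follows the standard localize--extend--shift--mollify route that the cited reference (and \cite{BPS2016a}) uses, and the easy inclusion ``strong $\subset$ weak'' is handled correctly.

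There is, however, one genuine gap in the hard direction: you never say where the weak boundary condition is actually consumed. The mechanism that makes the whole scheme work is that $u\in\bH{k}{\gat}(\om)$ is, by testing against $\phi\in\C{\infty}{\gan}(\om)$, \emph{equivalent} to the statement that the extension of $u$ by zero \emph{through $\gat$} lies in $\H{k}{}$ of the enlarged domain. Only after this zero extension does the translation toward the interior produce a function that literally vanishes in a neighbourhood of $\gat$ (one evaluates at points shifted across $\gat$, where the extension is zero); mollification with radius much smaller than the shift then lands in $\C{\infty}{\gat}(\om)$. As written, your type-(c) step extends across $\gan$ and then translates --- but translating a function that has not been extended by zero across $\gat$ does not create a support gap from $\gat$; near $\gat$ you would be sampling points outside $\om$ where no (zero) values are available. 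Relatedly, the ``extendable domain'' notion of \cite[Section 3]{PS2021a} is a geometric condition about enlarging $\om$ through $\gat$, used for the potential constructions, not a $\gan$-extension operator; the $\gan$-side only needs an ordinary Stein/Calder\'on extension for the strong Lipschitz boundary so that mollification is admissible up to $\ol{\om}$ (this is also needed for your type-(b) pieces, where plain mollification of a function known only on $\om$ is not smooth up to the boundary). With the zero-extension-through-$\gat$ step inserted, your outline closes and coincides with the proof in the cited reference.
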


We shall use the abbreviations 
$\H{k}{\emptyset}(\om)=\H{k}{}(\om)$ and $\H{0}{\gat}(\om)=\L{2}{}(\om)$,
where the first notion is actually a density result
and incorporated into the notation by purpose.

\subsection{Operators}
\label{sec:elaop}%

Let
$\symGrad$, $\RotRott$, and $\Div$
(here $\Grad$, $\Rot$, and $\Div$ act row-wise as the operators 
$\grad$, $\rot$, and $\div$ from the vector de Rham complex)
be realised as densely defined (unbounded) linear operators
\begin{align*}
\rsymGradgat:D(\rsymGradgat)\subset\L{2}{}(\om)&\to\L{2}{\S}(\om);
&
v&\mapsto\sym\Grad v=\frac{1}{2}\big(\Grad v+(\Grad v)^{\top}\big),\\
\rRotRottSgat:D(\rRotRottSgat)\subset\L{2}{\S}(\om)&\to\L{2}{\S}(\om);
&
S&\mapsto\RotRott S=\Rot\big((\Rot S)^{\top}\big),\\
\rDivSgat:D(\rDivSgat)\subset\L{2}{\S}(\om)&\to\L{2}{}(\om);
&
T&\mapsto\Div T
\end{align*}
($S$, $T$, $\Grad v$, $\sym\Grad v$, $\Rot S$, $\RotRott S$ are $(3\times3)$-tensor fields, 
and $v$, $\Div T$ are $3$-vector fields) with domains of definition
$$D(\rsymGradgat):=\C{\infty}{\gat}(\om),\qquad
D(\rRotRottSgat):=\C{\infty}{\S,\gat}(\om),\qquad
D(\rDivSgat):=\C{\infty}{\S,\gat}(\om)$$
satisfying the complex properties
$$\rRotRottSgat\rsymGradgat\subset0,\qquad
\rDivSgat\rRotRottSgat\subset0.$$
For elementary properties of these operators see, e.g., \cite{PZ2020b},
in particular, we have the collection of formulas presented in Lemma \ref{lem:PZformulalem}.
Here, we introduce the Lebesgue Hilbert space
and the test space of symmetric tensor fields
$$\L{2}{\S}(\om)
:=\big\{S\in\L{2}{}(\om):S^{\top}=S\big\},\qquad
\C{\infty}{\S,\gat}(\om)
:=\C{\infty}{\gat}(\om)\cap\L{2}{\S}(\om),$$
respectively. We get the elasticity complex on smooth tensor fields
\begin{equation*}
\def\arrowlength{5ex}
\def\arrowdistance{0}
\begin{tikzcd}[column sep=\arrowlength]
\cdots 
\arrow[r, rightarrow, shift left=\arrowdistance, "\cdots"] 
& 
\L{2}{}(\om) 
\ar[r, rightarrow, shift left=\arrowdistance, "\rsymGradgat"] 
& 
[2.5em]
\L{2}{\S}(\om)
\arrow[r, rightarrow, shift left=\arrowdistance, "\rRotRottSgat"] 
& 
[2.5em]
\L{2}{\S}(\om)
\arrow[r, rightarrow, shift left=\arrowdistance, "\rDivSgat"] 
& 
[1em]
\L{2}{}(\om)
\arrow[r, rightarrow, shift left=\arrowdistance, "\cdots"] 
&
\cdots.
\end{tikzcd}
\end{equation*}

The closures 
$$\symGradgat:=\ol{\rsymGradgat},\qquad
\RotRottSgat:=\ol{\rRotRottSgat},\qquad
\DivSgat:=\ol{\rDivSgat}$$
and Hilbert space adjoints 
$$\symGradgats=\rsymGradgats,\qquad
(\RotRottSgat)^{*}=(\rRotRottSgat)^{*},\qquad
\DivSgats=\rDivSgats$$
are given by the densely defined and closed linear operators
\begin{align*}
\A_{0}:=\symGradgat:D(\symGradgat)\subset\L{2}{}(\om)&\to\L{2}{\S}(\om);
&
v&\mapsto\symGrad v,\\
\A_{1}:=\RotRottSgat:D(\RotRottSgat)\subset\L{2}{\S}(\om)&\to\L{2}{\S}(\om);
&
S&\mapsto\RotRott S,\\
\A_{2}:=\DivSgat:D(\DivSgat)\subset\L{2}{\S}(\om)&\to\L{2}{}(\om);
&
T&\mapsto\Div T,\\
\A_{0}^{*}=\symGradgats=-\bDivSgan:D(\bDivSgan)\subset\L{2}{\S}(\om)&\to\L{2}{}(\om);
&
S&\mapsto-\Div S,\\
\A_{1}^{*}=(\RotRottSgat)^{*}=\bRotRottSgan:D(\bRotRottSgan)\subset\L{2}{\S}(\om)&\to\L{2}{\S}(\om);
&
T&\mapsto\RotRott T,\\
\A_{2}^{*}=\DivSgats=-\bsymGradgan:D(\bsymGradgan)\subset\L{2}{}(\om)&\to\L{2}{\S}(\om);
&
v&\mapsto-\symGrad v
\end{align*}
with domains of definition
\begin{align*}
D(\A_{0})=D(\symGradgat)&=\H{}{\gat}(\symGrad,\om),
&
D(\A_{0}^{*})=D(\bDivSgan)&=\bH{}{\S,\gan}(\Div,\om),\\
D(\A_{1})=D(\RotRottSgat)&=\H{}{\S,\gat}(\RotRott\!\!,\om),
&
D(\A_{1}^{*})=D(\bRotRottSgan)&=\bH{}{\S,\gan}(\RotRott\!\!,\om),\\
D(\A_{2})=D(\DivSgat)&=\H{}{\S,\gat}(\Div,\om),
&
D(\A_{2}^{*})=D(\bsymGradgan)&=\bH{}{\gan}(\symGrad,\om).
\end{align*}
We shall introduce the latter Sobolev spaces in the next section.

\subsection{Sobolev Spaces}
\label{sec:sobolev}%

Let
\begin{align*}
\H{}{}(\symGrad,\om)
&:=\big\{v\in\L{2}{}(\om):\symGrad v\in\L{2}{}(\om)\big\},\\
\H{}{\S}(\RotRott\!\!,\om)
&:=\big\{S\in\L{2}{\S}(\om):\RotRott S\in\L{2}{}(\om)\big\},\\
\H{}{\S}(\Div,\om)
&:=\big\{T\in\L{2}{\S}(\om):\Div T\in\L{2}{}(\om)\big\}.
\end{align*}
Note that $M\in\H{}{\S}(\RotRott\!\!,\om)$ implies $\RotRott M\in\L{2}{\S}(\om)$,
and that we have by Korn's inequality the regularity 
$$\H{}{}(\symGrad,\om)=\H{1}{}(\om)$$
with equivalent norms.
Moreover, we define boundary conditions in the \emph{strong sense} 
as closures of respective test fields, i.e.,
\begin{align*}
\H{}{\gat}(\symGrad,\om)&:=\ol{\C{\infty}{\gat}(\om)}^{\H{}{}(\symGrad,\om)},\\
\H{}{\S,\gat}(\RotRott\!\!,\om)&:=\ol{\C{\infty}{\S,\gat}(\om)}^{\H{}{\S}(\RotRott\!\!,\om)},\\
\H{}{\S,\gat}(\Div,\om)&:=\ol{\C{\infty}{\S,\gat}(\om)}^{\H{}{\S}(\Div,\om)},
\end{align*}
and we have $\H{}{\emptyset}(\symGrad,\om)=\H{}{}(\symGrad,\om)=\H{1}{}(\om)$,
$\H{}{\S,\emptyset}(\RotRott\!\!,\om)=\H{}{\S}(\RotRott\!\!,\om)$,
and $\H{}{\S,\emptyset}(\Div,\om)=\H{}{\S}(\Div,\om)$,
which are density results and incorporated into the notation by purpose.
Spaces with vanishing $\RotRott$ and $\Div$
are denoted by $\H{}{\S,\gat,0}(\RotRott\!\!,\om)$ and $\H{}{\S,\gat,0}(\Div,\om)$, respectively. 
Note that, again by Korn's inequality, we have 
$$\H{}{\gat}(\symGrad,\om)=\H{1}{\gat}(\om).$$ 

We need also the Sobolev spaces with boundary conditions defined 
in the \emph{weak sense}, i.e.,
\begin{align*}
\bH{}{\gat}(\symGrad,\om)
&:=\big\{v\in\H{}{}(\symGrad,\om):
\scp{\symGrad v}{\Phi}_{\L{2}{}(\om)}=-\scp{v}{\Div\Phi}_{\L{2}{}(\om)}\\
&\hspace{60mm}\forall\,\Phi\in\C{\infty}{\S,\gan}(\om)\big\},\\
\bH{}{\S,\gat}(\RotRott\!\!,\om)
&:=\big\{S\in\H{}{\S}(\RotRott\!\!,\om):
\scp{\RotRott S}{\Psi}_{\L{2}{}(\om)}=\scp{S}{\RotRott\Psi}_{\L{2}{}(\om)}\\
&\hspace{60mm}\forall\,\Psi\in\C{\infty}{\S,\gan}(\om)\big\},\\
\bH{}{\S,\gat}(\Div,\om)
&:=\big\{T\in\H{}{\S}(\Div,\om):
\scp{\Div T}{\phi}_{\L{2}{}(\om)}=-\scp{T}{\symGrad\phi}_{\L{2}{}(\om)}\\
&\hspace{60mm}\forall\,\phi\in\C{\infty}{\gan}(\om)\big\}.
\end{align*}
Note that ``\emph{strong $\subset$ weak}'' holds, i.e., 
$\H{}{\cdots}(\cdots,\om)\subset\bH{}{\cdots}(\cdots,\om)$, e.g., 
$$\H{}{\S,\gat}(\RotRott\!\!,\om)\subset\bH{}{\S,\gat}(\RotRott\!\!,\om),\qquad
\H{}{\S,\gat}(\Div,\om)\subset\bH{}{\S,\gat}(\Div,\om),$$
and that the complex properties hold in both the strong and the weak case, e.g.,
$$\symGrad\H{}{\gat}(\om)\subset\H{}{\S,\gat,0}(\RotRott\!\!,\om),\qquad
\RotRott\bH{}{\S,\gat}(\RotRott\!\!,\om)\subset\bH{}{\S,\gat,0}(\Div,\om),$$
which follows immediately by the definitions.
In Remark \ref{rem:weakeqstrongela} below we comment on the question 
whether ``\emph{strong $=$ weak}'' holds in general.

\subsection{Higher Order Sobolev Spaces}
\label{sec:highsobolev}%

For $k\in\nat_{0}$ we define higher order Sobolev spaces by
\begin{align*}
\H{k}{\S}(\om)
&:=\H{k}{}(\om)\cap\L{2}{\S}(\om),\\
\H{k}{\S,\gat}(\om)
&:=\ol{\C{\infty}{\S,\gat}(\om)}^{\H{k}{}(\om)}
=\H{k}{\gat}(\om)\cap\L{2}{\S}(\om),\\
\H{k}{}(\symGrad,\om)
&:=\big\{v\in\H{k}{}(\om):\symGrad v\in\H{k}{}(\om)\big\},\\
\H{k}{\gat}(\symGrad,\om)
&:=\big\{v\in\H{k}{\gat}(\om)\cap\H{}{\gat}(\symGrad,\om):\symGrad v\in\H{k}{\gat}(\om)\big\},\\
\H{k}{\S}(\RotRott\!\!,\om)
&:=\big\{S\in\H{k}{\S}(\om):\RotRott S\in\H{k}{}(\om)\big\},\\
\H{k}{\S,\gat}(\RotRott\!\!,\om)
&:=\big\{S\in\H{k}{\gat}(\om)\cap\H{}{\S,\gat}(\RotRott\!\!,\om):\RotRott S\in\H{k}{\gat}(\om)\big\},\\
\H{k}{\S}(\Div,\om)
&:=\big\{T\in\H{k}{\S}(\om):\Div T\in\H{k}{}(\om)\big\},\\
\H{k}{\S,\gat}(\Div,\om)
&:=\big\{T\in\H{k}{\gat}(\om)\cap\H{}{\S,\gat}(\Div,\om):\Div T\in\H{k}{\gat}(\om)\big\}.
\end{align*}
We see $\H{k}{\S,\emptyset}(\RotRott\!\!,\om)=\H{k}{\S}(\RotRott\!\!,\om)$
and $\H{0}{\S,\emptyset}(\RotRott\!\!,\om)=\H{}{\S}(\RotRott\!\!,\om)$
as well as $\H{0}{\S,\gat}(\RotRott\!\!,\om)=\H{}{\S,\gat}(\RotRott\!\!,\om)$.
Note that for $\gat\neq\emptyset$ it holds
\begin{align}
\label{defsobolevkgeqtwo}
\H{k}{\S,\gat}(\RotRott\!\!,\om)
&=\big\{S\in\H{k}{\S,\gat}(\om):\RotRott S\in\H{k}{\gat}(\om)\big\},\qquad
k\geq2,
\end{align}
but for $\gat\neq\emptyset$ and $k=0$ and $k=1$ \big(as $\H{0}{\S,\gat}(\om)=\L{2}{\S}(\om)$\big)
\begin{align*}
\H{0}{\S,\gat}(\RotRott\!\!,\om)
&=\H{}{\S,\gat}(\RotRott\!\!,\om)\\
&\subsetneq\big\{S\in\H{0}{\S,\gat}(\om):\RotRott S\in\H{0}{\gat}(\om)\big\}
=\H{}{\S}(\RotRott\!\!,\om),\\
\H{1}{\S,\gat}(\RotRott\!\!,\om)
&\subsetneq\big\{S\in\H{1}{\S,\gat}(\om):\RotRott S\in\H{1}{\gat}(\om)\big\},
\end{align*}
respectively. As before, we introduce the kernels
\begin{align*}
\H{k}{\S,\gat,0}(\RotRott\!\!,\om)
&:=\H{k}{\gat}(\om)\cap\H{}{\S,\gat,0}(\RotRott\!\!,\om)
=\H{k}{\S,\gat}(\RotRott\!\!,\om)\cap\H{}{\S,0}(\RotRott\!\!,\om)\\
&\;=\big\{S\in\H{k}{\S,\gat}(\RotRott\!\!,\om):\RotRott S=0\big\}.
\end{align*}
The corresponding remarks and definitions extend 
to the $\H{k}{\gat}(\symGrad,\om)$-spaces and $\H{k}{\S,\gat}(\Div,\om)$-spaces as well.
In particular, we have for $\gat\neq\emptyset$ and $k\geq1$
\begin{align}
\begin{aligned}
\label{defsobolevkgeqone}
\H{k}{\gat}(\symGrad,\om)
&=\big\{v\in\H{k}{\gat}(\om):\symGrad v\in\H{k}{\gat}(\om)\big\},\\
\H{k}{\S,\gat}(\Div,\om)
&=\big\{T\in\H{k}{\S,\gat}(\om):\Div T\in\H{k}{\gat}(\om)\big\},
\end{aligned}
\end{align}
and 
\begin{align*}
\H{0}{\gat}(\symGrad,\om)
=\H{}{\gat}(\symGrad,\om)
&\subsetneq\big\{v\in\H{0}{\gat}(\om):\symGrad v\in\H{0}{\gat}(\om)\big\}
=\H{}{}(\symGrad,\om),\\
\H{0}{\S,\gat}(\Div,\om)
=\H{}{\S,\gat}(\Div,\om)
&\subsetneq\big\{T\in\H{0}{\S,\gat}(\om):\Div T\in\H{0}{\gat}(\om)\big\}
=\H{}{\S}(\Div,\om),
\end{align*}
as well as
\begin{align*}
\H{k}{\S,\gat,0}(\Div,\om)
&=\H{k}{\gat}(\om)\cap\H{}{\S,\gat,0}(\Div,\om)
=\H{k}{\S,\gat}(\Div,\om)\cap\H{}{\S,0}(\Div,\om)\\
&=\big\{T\in\H{k}{\S,\gat}(\Div,\om):\Div T=0\big\}.
\end{align*}

Analogously, we define the Sobolev spaces 
$\bH{k}{\gat}(\symGrad,\om)$, $\bH{k}{\S,\gat}(\RotRott\!\!,\om)$, $\bH{k}{\S,\gat}(\Div,\om)$,
and $\bH{k}{\S,\gat,0}(\RotRott\!\!,\om)$, $\bH{k}{\S,\gat,0}(\Div,\om)$
using the respective Sobolev spaces with weak boundary conditions.
Note that again ``\emph{strong $\subset$ weak}'' holds, i.e., 
$\H{\cdots}{\cdots}(\cdots,\om)\subset\bH{\cdots}{\cdots}(\cdots,\om)$, e.g., 
$$\H{k}{\S,\gat}(\RotRott\!\!,\om)\subset\bH{k}{\S,\gat}(\RotRott\!\!,\om),\qquad
\H{k}{\S,\gat}(\Div,\om)\subset\bH{k}{\S,\gat}(\Div,\om),$$
and that the complex properties hold in both the strong and the weak case, e.g.,
$$\symGrad\H{k+1}{\gat}(\om)\subset\H{k}{\S,\gat,0}(\RotRott\!\!,\om),\qquad
\RotRott\bH{k}{\S,\gat}(\RotRott\!\!,\om)\subset\bH{k}{\S,\gat,0}(\Div,\om).$$
Moreover, the corresponding results for \eqref{defsobolevkgeqtwo} and \eqref{defsobolevkgeqone}
hold for the weak spaces as well.

In the forthcoming sections we shall also investigate
whether indeed ``\emph{strong $=$ weak}'' holds.
We start with a simple implication from Lemma \ref{lem:weakeqstrongderham}.

\begin{cor}
\label{cor:weakeqstrongderham}
$\bH{k}{\S,\gat}(\om)=\H{k}{\S,\gat}(\om)$, i.e.,
weak and strong boundary conditions coincide
for the standard Sobolev spaces of symmetric tensor fields 
with mixed boundary conditions.
\end{cor}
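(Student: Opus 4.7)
The plan is to reduce the statement to Lemma \ref{lem:weakeqstrongderham} essentially by a component-wise argument, exploiting that the symmetry constraint interacts trivially with the distinction between weak and strong boundary conditions.

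First I would recall that the inclusion ``\emph{strong $\subset$ weak}'', i.e., $\H{k}{\S,\gat}(\om)\subset\bH{k}{\S,\gat}(\om)$, is immediate from the definitions (as noted in the text): any approximating sequence of test fields in $\C{\infty}{\S,\gat}(\om)$ manifestly satisfies the integration-by-parts identities defining the weak space, and passing to the $\H{k}{}(\om)$-limit preserves them. So the only nontrivial task is the reverse inclusion.

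For ``\emph{weak $\subset$ strong}'', let $S\in\bH{k}{\S,\gat}(\om)$. By the analogue of the definition of $\bH{k}{\S,\gat}(\om)$ for symmetric tensor fields, $S$ belongs to $\H{k}{}(\om)$, is symmetric (i.e., $S\in\L{2}{\S}(\om)$), and satisfies the weak boundary conditions of Lemma \ref{lem:weakeqstrongderham} component-wise, since each scalar component of $S$ lies in $\bH{k}{\gat}(\om)$. Hence $S\in\bH{k}{\gat}(\om)\cap\L{2}{\S}(\om)$. Applying Lemma \ref{lem:weakeqstrongderham} to each component gives $\bH{k}{\gat}(\om)=\H{k}{\gat}(\om)$, so $S\in\H{k}{\gat}(\om)\cap\L{2}{\S}(\om)$, which equals $\H{k}{\S,\gat}(\om)$ by the identity stated together with the definition of $\H{k}{\S,\gat}(\om)$.

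The only conceivable obstacle would be the upgrade from approximation by general test fields in $\C{\infty}{\gat}(\om)$ to approximation by \emph{symmetric} test fields in $\C{\infty}{\S,\gat}(\om)$ (needed for the identity $\H{k}{\gat}(\om)\cap\L{2}{\S}(\om)=\ol{\C{\infty}{\S,\gat}(\om)}^{\H{k}{}(\om)}$ that underlies the second equality in the definition of $\H{k}{\S,\gat}(\om)$). This, however, is handled by applying the bounded linear projection $\sym:\H{k}{}(\om)\to\H{k}{}(\om)\cap\L{2}{\S}(\om)$ to any approximating sequence: $\sym$ preserves the vanishing near $\gat$, commutes with $\H{k}{}(\om)$-convergence, and acts as the identity on the symmetric limit $S$. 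Thus the whole argument collapses to the chain
\[
\bH{k}{\S,\gat}(\om)=\bH{k}{\gat}(\om)\cap\L{2}{\S}(\om)=\H{k}{\gat}(\om)\cap\L{2}{\S}(\om)=\H{k}{\S,\gat}(\om),
\]
where the middle equality is the content of Lemma \ref{lem:weakeqstrongderham}.
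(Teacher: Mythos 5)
Your proof is correct and follows essentially the paper's (implicit) route: the paper treats the corollary as immediate by intersecting the scalar statement $\bH{k}{\gat}(\om)=\H{k}{\gat}(\om)$ of Lemma \ref{lem:weakeqstrongderham} with $\L{2}{\S}(\om)$, relying on the identity $\H{k}{\S,\gat}(\om)=\H{k}{\gat}(\om)\cap\L{2}{\S}(\om)$ built into the definitions, exactly as in your final chain of equalities. Your remark that the bounded projection $\sym$ turns an approximating sequence in $\C{\infty}{\gat}(\om)$ into one in $\C{\infty}{\S,\gat}(\om)$ is precisely the (unstated) justification of that identity, so nothing is missing.
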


Lemma \ref{lem:weakeqstrongderham}, Corollary \ref{cor:weakeqstrongderham},
\eqref{defsobolevkgeqtwo}, \eqref{defsobolevkgeqone}, and Korn's inequality
show the following.

\begin{lem}[higher order weak and strong partial boundary conditions coincide]
\label{lem:weakeqstrongela}
\mbox{}
\begin{itemize}
\item[\bf(i)]
For $k\geq0$ it holds
$\H{k}{\gat}(\symGrad,\om)=\H{k+1}{\gat}(\om)=\bH{k+1}{\gat}(\om)$.
\item[\bf(ii)]
For $k\geq1$ it holds
\begin{align*}
\bH{k}{\gat}(\symGrad,\om)
&=\big\{v\in\H{k}{\gat}(\om):\symGrad v\in\H{k}{\gat}(\om)\big\}
=\H{k}{\gat}(\symGrad,\om)
=\H{k+1}{\gat}(\om),\\
\bH{k}{\S,\gat}(\Div,\om)
&=\big\{T\in\H{k}{\S,\gat}(\om):\Div T\in\H{k}{\gat}(\om)\big\}
=\H{k}{\S,\gat}(\Div,\om).
\end{align*}
\item[\bf(iii)]
For $k\geq2$ it holds
\begin{align*}
\bH{k}{\S,\gat}(\RotRott\!\!,\om)
&=\big\{S\in\H{k}{\S,\gat}(\om):\RotRott S\in\H{k}{\gat}(\om)\big\}
=\H{k}{\S,\gat}(\RotRott\!\!,\om).
\end{align*}
\end{itemize}
\end{lem}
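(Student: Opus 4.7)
The plan is to reduce each of (i), (ii), (iii) to the already established ``weak $=$ strong'' results for standard Sobolev spaces (Lemma \ref{lem:weakeqstrongderham} and Corollary \ref{cor:weakeqstrongderham}) together with the pointwise reformulations \eqref{defsobolevkgeqtwo}, \eqref{defsobolevkgeqone}, and Korn's inequality. The recurring mechanism is that each weak graph space is built by imposing a weak $\bH{k}{\gat}$-condition both on the field and on its differential image; once both weak conditions are replaced by their strong counterparts at the standard-Sobolev level, the weak and strong graph spaces must coincide as sets.

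For part (i), the case $k=0$ is immediate: $\H{0}{\gat}(\symGrad,\om)=\H{}{\gat}(\symGrad,\om)=\H{1}{\gat}(\om)=\bH{1}{\gat}(\om)$, by the definition, Korn's inequality (as already noted), and Lemma \ref{lem:weakeqstrongderham}. For $k\geq1$, \eqref{defsobolevkgeqone} rewrites $\H{k}{\gat}(\symGrad,\om)=\{v\in\H{k}{\gat}(\om):\symGrad v\in\H{k}{\gat}(\om)\}$. The inclusion $\H{k+1}{\gat}(\om)\subset\H{k}{\gat}(\symGrad,\om)$ is straightforward by approximation in $\C{\infty}{\gat}(\om)$ together with the observation that $\symGrad\C{\infty}{\gat}(\om)\subset\C{\infty}{\S,\gat}(\om)$. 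For the reverse inclusion, higher-order Korn produces $v\in\H{k+1}{}(\om)$, and the joint strong $\gat$-conditions on $v$ and on $\symGrad v$ are then combined (by an induction on $k$ anchored at $k=1$, where the algebraic identity $v|_{\gat}=0$ together with $\symGrad v|_{\gat}=0$ forces $\Grad v|_{\gat}=0$ after separating tangential and normal components) to conclude $v\in\H{k+1}{\gat}(\om)$. The concluding equality $\H{k+1}{\gat}(\om)=\bH{k+1}{\gat}(\om)$ is Lemma \ref{lem:weakeqstrongderham}.

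Parts (ii) and (iii) then run mechanically. The weak analogues of \eqref{defsobolevkgeqone} and \eqref{defsobolevkgeqtwo} give, for example,
$$\bH{k}{\gat}(\symGrad,\om)=\{v\in\bH{k}{\gat}(\om):\symGrad v\in\bH{k}{\gat}(\om)\},\qquad\bH{k}{\S,\gat}(\Div,\om)=\{T\in\bH{k}{\S,\gat}(\om):\Div T\in\bH{k}{\gat}(\om)\}$$
for $k\geq1$, and analogously for $\bH{k}{\S,\gat}(\RotRott\!\!,\om)$ when $k\geq2$. Applying Lemma \ref{lem:weakeqstrongderham} and Corollary \ref{cor:weakeqstrongderham} to each of the weak boundary conditions on $v$, $T$, $S$, $\symGrad v$, $\Div T$, $\RotRott S$ turns the set-builders into their strong counterparts, and part (i) together with \eqref{defsobolevkgeqone}/\eqref{defsobolevkgeqtwo} closes the cascade. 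The threshold $k\geq2$ in (iii) is forced precisely because \eqref{defsobolevkgeqtwo} is only available from $k=2$ onwards.

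The genuine obstacle is the reverse inclusion in (i) for $k\geq1$: upgrading a field $v\in\H{k}{\gat}(\om)$ with $\symGrad v\in\H{k}{\gat}(\om)$ all the way to $v\in\H{k+1}{\gat}(\om)$. Higher-order Korn yields the interior regularity $v\in\H{k+1}{}(\om)$, but extracting the matching boundary trace information -- namely, that every $k$-th order derivative of $v$ has vanishing trace on $\gat$ -- requires combining both strong $\gat$-conditions via an algebraic/iterative step (at base level, extracting the normal-derivative trace of $v$ from $\symGrad v|_{\gat}=0$ once tangential derivatives have been killed by $v|_{\gat}=0$, and iterating this reduction in $k$). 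Once this promotion of trace order is secured, parts (ii) and (iii) become a matter of threading the definitions through Lemma \ref{lem:weakeqstrongderham} and Corollary \ref{cor:weakeqstrongderham}.
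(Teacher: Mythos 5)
Your proposal is correct and follows essentially the same route as the paper, which disposes of this lemma in one line by combining Lemma \ref{lem:weakeqstrongderham}, Corollary \ref{cor:weakeqstrongderham}, the reformulations \eqref{defsobolevkgeqtwo} and \eqref{defsobolevkgeqone}, and Korn's inequality; your parts (ii) and (iii) are indeed just the mechanical replacement of weak by strong conditions inside the set-builder descriptions. For the one genuinely nontrivial step you correctly isolate in (i) --- promoting $v\in\H{k}{\gat}(\om)$ with $\symGrad v\in\H{k}{\gat}(\om)$ to $v\in\H{k+1}{\gat}(\om)$ --- I would replace the pointwise tangential/normal trace separation (delicate on a strong Lipschitz $\gat$) by its distributional counterpart: the Korn identity $\p_{i}\p_{j}v_{m}=\p_{i}(\symGrad v)_{jm}+\p_{j}(\symGrad v)_{im}-\p_{m}(\symGrad v)_{ij}$ lets you verify the weak condition defining $\bH{k+1}{\gat}(\om)$ directly from $v\in\bH{k}{\gat}(\om)$ and $\symGrad v\in\bH{k}{\gat}(\om)$ by moving all derivatives onto test fields from $\C{\infty}{\gan}(\om)$, after which Lemma \ref{lem:weakeqstrongderham} yields $v\in\H{k+1}{\gat}(\om)$.
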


\begin{rem}[weak and strong partial boundary conditions coincide]
\label{rem:weakeqstrongela}
In \cite{PZ2020b} we could prove the corresponding results ``\emph{strong $=$ weak}'' for the whole elasticity complex
but only with empty or full boundary conditions ($\ga_{t}=\emptyset$ or $\ga_{t}=\ga$).
Therefore, in these special cases, the adjoints are well-defined on the spaces with strong boundary conditions as well.

Lemma \ref{lem:weakeqstrongela} shows that
for higher values of $k$ indeed ``\emph{strong $=$ weak}'' holds.
Thus to show ``\emph{strong $=$ weak}'' in general 
we only have to prove that equality holds in the remains cases
$k=0$ and $k=1$, i.e.,
we only have to show
\begin{align*}
\bH{}{\gat}(\symGrad,\om)
&\subset\H{}{\gat}(\symGrad,\om),
&
\bH{}{\S,\gat}(\RotRott\!\!,\om)
&\subset\H{}{\S,\gat}(\RotRott\!\!,\om),\\
\bH{}{\S,\gat}(\Div,\om)
&\subset\H{k}{\S,\gat}(\Div,\om),
&
\bH{1}{\S,\gat}(\RotRott\!\!,\om)
&\subset\H{1}{\S,\gat}(\RotRott\!\!,\om).
\end{align*}

The most delicate situation appears due to the second order nature of $\RotRottS$.
In Corollary \ref{cor:weakstrongela} we shall show 
using regular decompositions that these results 
(weak and strong boundary conditions coincide for the elasticity complex for all $k\geq0$)
indeed hold true.
\end{rem}

\subsection{More Sobolev Spaces}
\label{sec:sobolevmore}%

For $k\in\nat$ we introduce also slightly less regular higher order Sobolev spaces by
\begin{align*}
\H{k,k-1}{\S,\gat}(\RotRott\!\!,\om)
&:=\big\{S\in\H{k}{\gat}(\om)\cap\H{}{\S,\gat}(\RotRott\!\!,\om):\RotRott S\in\H{k-1}{\gat}(\om)\big\},\\
\bH{k,k-1}{\S,\gat}(\RotRott\!\!,\om)
&:=\big\{S\in\bH{k}{\gat}(\om)\cap\bH{}{\S,\gat}(\RotRott\!\!,\om):\RotRott S\in\bH{k-1}{\gat}(\om)\big\},
\end{align*}
and we extend all conventions of our notations.
For the kernels we have
\begin{align*}
\H{k,k-1}{\S,\gat,0}(\RotRott\!\!,\om)
&=\H{k}{\S,\gat,0}(\RotRott\!\!,\om),
&
\bH{k,k-1}{\S,\gat,0}(\RotRott\!\!,\om)
&=\bH{k}{\S,\gat,0}(\RotRott\!\!,\om).
\end{align*}
Note that, as before, 
the intersection with $\H{}{\S,\gat}(\RotRott\!\!,\om)$ and $\bH{}{\S,\gat}(\RotRott\!\!,\om)$
is only needed if $k=1$.
Again we have ``\emph{strong $\subset$ weak}'', i.e., 
$\H{k,k-1}{\S,\gat}(\RotRott\!\!,\om)\subset\bH{k,k-1}{\S,\gat}(\RotRott\!\!,\om)$,
and in both cases (weak and strong) the complex properties hold, e.g.,
$$\symGrad\H{k+1}{\gat}(\om)\subset\H{k,k-1}{\S,\gat,0}(\RotRott\!\!,\om),\qquad
\RotRott\bH{k,k-1}{\S,\gat}(\RotRott\!\!,\om)\subset\bH{k-1}{\S,\gat,0}(\Div,\om).$$

Similar to Lemma \ref{lem:weakeqstrongela} we have the following.

\begin{lem}[higher order weak and strong partial boundary conditions coincide]
\label{lem:weakeqstrongela2}
For $k\geq2$
\begin{align*}
\bH{k,k-1}{\S,\gat}(\RotRott\!\!,\om)
&=\big\{S\in\H{k}{\S,\gat}(\om):\RotRott S\in\H{k-1}{\gat}(\om)\big\}
=\H{k,k-1}{\S,\gat}(\RotRott\!\!,\om).
\end{align*}
\end{lem}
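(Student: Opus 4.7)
The strategy mirrors that of Lemma~\ref{lem:weakeqstrongela}(iii); only the regularity book-keeping for $\RotRott S$ changes. The double equality is established by the chain
\[
\H{k,k-1}{\S,\gat}(\RotRott\!\!,\om)
\subset
\bH{k,k-1}{\S,\gat}(\RotRott\!\!,\om)
\subset
\big\{S\in\H{k}{\S,\gat}(\om):\RotRott S\in\H{k-1}{\gat}(\om)\big\}
\subset
\H{k,k-1}{\S,\gat}(\RotRott\!\!,\om),
\]
so the task is to verify each inclusion.

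First I would record that the initial inclusion is the built-in ``\emph{strong~$\subset$~weak}''. For the second inclusion, suppose $S\in\bH{k,k-1}{\S,\gat}(\RotRott\!\!,\om)$. By Corollary~\ref{cor:weakeqstrongderham} one has $\bH{k}{\S,\gat}(\om)=\H{k}{\S,\gat}(\om)$, giving $S\in\H{k}{\S,\gat}(\om)$. Similarly, Lemma~\ref{lem:weakeqstrongderham} applied to $\RotRott S\in\bH{k-1}{\gat}(\om)$ yields $\RotRott S\in\H{k-1}{\gat}(\om)$. This places $S$ in the middle set.

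The crucial inclusion is the last one. Take $S$ in the middle set. Since $S\in\H{k}{\S,\gat}(\om)$ and $k\geq 2$, the space $\H{k}{\S,\gat}(\om)$ equals the closure of $\C{\infty}{\S,\gat}(\om)$ in the $\H{k}{}(\om)$-norm (by definition of $\H{k}{\gat}(\om)$ intersected with $\L{2}{\S}(\om)$, see also Corollary~\ref{cor:weakeqstrongderham}). Choose an approximating sequence $S_{n}\in\C{\infty}{\S,\gat}(\om)$ with $S_{n}\to S$ in $\H{k}{}(\om)$. Because $\RotRott$ is a second-order operator, $\RotRott S_{n}\to\RotRott S$ in $\H{k-2}{}(\om)\subset\L{2}{}(\om)$, using $k\geq 2$. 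Hence the pair $(S_{n},\RotRott S_{n})\to(S,\RotRott S)$ converges in the graph norm of $\H{}{\S}(\RotRott\!\!,\om)$, showing
\[
S\in\H{}{\S,\gat}(\RotRott\!\!,\om).
\]
Combined with $S\in\H{k}{\gat}(\om)$ and the hypothesis $\RotRott S\in\H{k-1}{\gat}(\om)$, this is exactly the membership in $\H{k,k-1}{\S,\gat}(\RotRott\!\!,\om)$, closing the chain.

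The only delicate point is the use of $k\geq 2$: it is precisely this that makes $\RotRott S_{n}\to\RotRott S$ converge in $\L{2}{}(\om)$ without needing any additional boundary analysis, and thus promotes the $\H{k}{}$-approximation into an approximation in the strong $\RotRott$-graph norm. The cases $k=0,1$ (where this argument breaks down) are the ones deferred to the regular-decomposition machinery developed later, as announced in Remark~\ref{rem:weakeqstrongela}.
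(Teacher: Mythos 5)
Your proposal is correct and follows essentially the same route the paper takes: reduce the weak spaces to the strong ones componentwise via Lemma~\ref{lem:weakeqstrongderham} and Corollary~\ref{cor:weakeqstrongderham}, and observe that for $k\geq2$ membership in $\H{}{\S,\gat}(\RotRott\!\!,\om)$ is automatic, since an $\H{k}{}$-approximation by fields from $\C{\infty}{\S,\gat}(\om)$ is already an approximation in the $\RotRott$-graph norm. Your explicit density argument is exactly the mechanism behind the paper's identity \eqref{defsobolevkgeqtwo}, which the authors invoke implicitly when saying the lemma follows ``similar to Lemma~\ref{lem:weakeqstrongela}''.
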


\subsection{Some Elasticity Complexes}
\label{sec:elacomplexes}%

By definition we have densely defined and closed (unbounded) linear operators 
defining three dual pairs 
\begin{align*}
(\symGradgat,\symGradgats)
&=(\symGradgat,-\bDivSgan),\\
\big(\RotRottSgat,(\RotRottSgat)^{*}\big)
&=(\RotRottSgat,\bRotRottSgan),\\
(\DivSgat,\DivSgats)
&=(\DivSgat,-\bsymGradgan).
\end{align*}
\cite[Remark 2.5, Remark 2.6]{PS2021a} show the complex properties 
\begin{align*}
\RotRottSgat\symGradgat&\subset0,
&
\DivSgat\RotRottSgat&\subset0,\\
-\bDivSgan\bRotRottSgan&\subset0,
&
-\bRotRottSgan\bsymGradgan&\subset0.
\end{align*}
Hence we get the primal and dual elasticity Hilbert complex
\begin{equation}
\label{elacomplex1}
\def\arrowlength{16ex}
\def\arrowdistance{.8}
\begin{tikzcd}[column sep=\arrowlength]
\cdots
\arrow[r, rightarrow, shift left=\arrowdistance, "\cdots"] 
\arrow[r, leftarrow, shift right=\arrowdistance, "\cdots"']
& 
[-5em]
\L{2}{}(\om) 
\ar[r, rightarrow, shift left=\arrowdistance, "\symGradgat"] 
\ar[r, leftarrow, shift right=\arrowdistance, "-\bDivSgan"']
&
[-1em]
\L{2}{\S}(\om) 
\ar[r, rightarrow, shift left=\arrowdistance, "\RotRottSgat"] 
\ar[r, leftarrow, shift right=\arrowdistance, "\bRotRottSgan"']
& 
[-1em]
\L{2}{\S}(\om) 
\arrow[r, rightarrow, shift left=\arrowdistance, "\DivSgat"] 
\arrow[r, leftarrow, shift right=\arrowdistance, "-\bsymGradgan"']
& 
[-1em]
\L{2}{}(\om) 
\arrow[r, rightarrow, shift left=\arrowdistance, "\cdots"] 
\arrow[r, leftarrow, shift right=\arrowdistance, "\cdots"']
&
[-5em]
\cdots
\end{tikzcd}
\end{equation}
with the complex properties 
\begin{align*}
R(\symGradgat)&\subset N(\RotRottSgat),
&
R(\bRotRottSgan)&\subset N(\bDivSgan),\\
R(\RotRottSgat)&\subset N(\DivSgat),
&
R(\bsymGradgan)&\subset N(\bRotRottSgan).
\end{align*}
The long primal and dual elasticity Hilbert complex,
cf.~\cite[(12)]{PS2021a}, reads
\begin{equation}
\label{elacomplex2}
\footnotesize
\def\arrowlength{16ex}
\def\arrowdistance{.8}
\begin{tikzcd}[column sep=\arrowlength]
\RM_{\gat}
\arrow[r, rightarrow, shift left=\arrowdistance, "\iota_{\RM_{\gat}}"] 
\arrow[r, leftarrow, shift right=\arrowdistance, "\pi_{\RM_{\gat}}"']
& 
[-2em]
\L{2}{}(\om) 
\ar[r, rightarrow, shift left=\arrowdistance, "\symGradgat"] 
\ar[r, leftarrow, shift right=\arrowdistance, "-\bDivSgan"']
&
[-1em]
\L{2}{\S}(\om) 
\ar[r, rightarrow, shift left=\arrowdistance, "\RotRottSgat"] 
\ar[r, leftarrow, shift right=\arrowdistance, "\bRotRottSgan"']
& 
[0em]
\L{2}{\S}(\om) 
\arrow[r, rightarrow, shift left=\arrowdistance, "\DivSgat"] 
\arrow[r, leftarrow, shift right=\arrowdistance, "-\bsymGradgan"']
& 
[-1em]
\L{2}{}(\om) 
\arrow[r, rightarrow, shift left=\arrowdistance, "\pi_{\RM_{\gan}}"] 
\arrow[r, leftarrow, shift right=\arrowdistance, "\iota_{\RM_{\gan}}"']
&
[-2em]
\RM_{\gan}
\end{tikzcd}
\end{equation}
with the additional complex properties 
\begin{align*}
R(\iota_{\RM_{\gat}
})&=N(\symGradgat)=\RM_{\gat},
&
\ol{R(\bDivSgan)}&=\RM_{\gat}^{\bot_{\L{2}{}(\om)}},\\
\ol{R(\DivSgat)}&=\RM_{\gan}^{\bot_{\L{2}{}(\om)}},
&
R(\iota_{\RM_{\gan}})&=N(\bsymGradgan)=\RM_{\gan},
\end{align*}
where
$$\RM_{\Sigma}
=\begin{cases}
\{0\}&\text{if }\Sigma\neq\emptyset,\\
\RM&\text{if }\Sigma=\emptyset,
\end{cases}\qquad\text{with}\qquad
\RM:=\big\{x\mapsto Qx+q:Q\in\reals^{3\times3}\,\text{skew},\,q\in\reals^{3}\big\}$$
denoting the global rigid motions in $\om$. Note that $\dim\RM=6$.

More generally, in addition to \eqref{elacomplex2},
we shall discuss for $k\in\nat_{0}$ the higher Sobolev order 
(long primal and formally dual) elasticity Hilbert complexes
(omitting $\om$ in the notation)

\begin{equation*}
\footnotesize
\def\arrowlength{16ex}
\def\arrowdistance{0}
\begin{tikzcd}[column sep=\arrowlength]
\RM_{\gat}
\arrow[r, rightarrow, shift left=\arrowdistance, "\iota_{\RM_{\gat}}"] 
& 
[-3em]
\H{k}{\gat}
\ar[r, rightarrow, shift left=\arrowdistance, "\symGradgatk"] 
&
[-1em]
\H{k}{\S,\gat}
\ar[r, rightarrow, shift left=\arrowdistance, "\RotRottSgatk"] 
& 
[-1em]
\H{k}{\S,\gat}
\arrow[r, rightarrow, shift left=\arrowdistance, "\DivSgatk"] 
& 
[-3em]
\H{k}{\gat}
\arrow[r, rightarrow, shift left=\arrowdistance, "\pi_{\RM_{\gan}}"] 
&
[-3em]
\RM_{\gan},
\end{tikzcd}
\end{equation*}
\vspace*{-4mm}
\begin{equation*}
\footnotesize
\def\arrowlength{16ex}
\def\arrowdistance{0}
\begin{tikzcd}[column sep=\arrowlength]
\RM_{\gat}
\arrow[r, leftarrow, shift right=\arrowdistance, "\pi_{\RM_{\gat}}"]
& 
[-3em]
\H{k}{\gan}
\ar[r, leftarrow, shift right=\arrowdistance, "-\bDivSgank"]
&
[-2em]
\H{k}{\S,\gan}
\ar[r, leftarrow, shift right=\arrowdistance, "\bRotRottSgank"]
& 
[0em]
\H{k}{\S,\gan}
\arrow[r, leftarrow, shift right=\arrowdistance, "-\bsymGradgank"]
& 
[0em]
\H{k}{\gan}
\arrow[r, leftarrow, shift right=\arrowdistance, "\iota_{\RM_{\gan}}"]
&
[-3em]
\RM_{\gan}
\end{tikzcd}
\end{equation*}
with associated domain complexes
\begin{equation*}
\scriptsize
\def\arrowlength{16ex}
\def\arrowdistance{0}
\begin{tikzcd}[column sep=\arrowlength]
\RM_{\gat}
\arrow[r, rightarrow, shift left=\arrowdistance, "\iota_{\RM_{\gat}}"] 
& 
[-3em]
\H{k}{\gat}(\symGrad)
\ar[r, rightarrow, shift left=\arrowdistance, "\symGradgatk"] 
&
[-1em]
\H{k}{\S,\gat}(\RotRott)
\ar[r, rightarrow, shift left=\arrowdistance, "\RotRottSgatk"] 
& 
[-1em]
\H{k}{\S,\gat}(\Div)
\arrow[r, rightarrow, shift left=\arrowdistance, "\DivSgatk"] 
& 
[-3em]
\H{k}{\gat}
\arrow[r, rightarrow, shift left=\arrowdistance, "\pi_{\RM_{\gan}}"] 
&
[-3em]
\RM_{\gan},
\end{tikzcd}
\end{equation*}
\vspace*{-5mm}
\begin{equation*}
\scriptsize
\def\arrowlength{16ex}
\def\arrowdistance{0}
\begin{tikzcd}[column sep=\arrowlength]
\RM_{\gat}
\arrow[r, leftarrow, shift right=\arrowdistance, "\pi_{\RM_{\gat}}"]
& 
[-3em]
\H{k}{\gat}
\ar[r, leftarrow, shift right=\arrowdistance, "-\bDivSgank"]
&
[-2em]
\bH{k}{\S,\gan}(\Div)
\ar[r, leftarrow, shift right=\arrowdistance, "\bRotRottSgank"]
& 
[-1em]
\bH{k}{\S,\gan}(\RotRott)
\arrow[r, leftarrow, shift right=\arrowdistance, "-\bsymGradgank"]
& 
[0em]
\bH{k}{\gan}(\symGrad)
\arrow[r, leftarrow, shift right=\arrowdistance, "\iota_{\RM_{\gan}}"]
&
[-3em]
\RM_{\gan}.
\end{tikzcd}
\end{equation*}

Additionally, for $k\geq1$ we will also discuss the following variants of the elasticity complexes 
\begin{equation*}
\footnotesize
\def\arrowlength{16ex}
\def\arrowdistance{0}
\begin{tikzcd}[column sep=\arrowlength]
\RM_{\gat}
\arrow[r, rightarrow, shift left=\arrowdistance, "\iota_{\RM_{\gat}}"] 
& 
[-3em]
\H{k}{\gat}
\ar[r, rightarrow, shift left=\arrowdistance, "\symGradgatk"] 
&
[-1em]
\H{k}{\S,\gat}
\ar[r, rightarrow, shift left=\arrowdistance, "\RotRottSgatkkmo"] 
& 
[0em]
\H{k-1}{\S,\gat}
\arrow[r, rightarrow, shift left=\arrowdistance, "\DivSgatkmo"] 
& 
[-3em]
\H{k-1}{\gat}
\arrow[r, rightarrow, shift left=\arrowdistance, "\pi_{\RM_{\gan}}"] 
&
[-3em]
\RM_{\gan},
\end{tikzcd}
\end{equation*}
\vspace*{-4mm}
\begin{equation*}
\footnotesize
\def\arrowlength{16ex}
\def\arrowdistance{0}
\begin{tikzcd}[column sep=\arrowlength]
\RM_{\gat}
\arrow[r, leftarrow, shift right=\arrowdistance, "\pi_{\RM_{\gat}}"]
& 
[-3em]
\H{k-1}{\gan}
\ar[r, leftarrow, shift right=\arrowdistance, "-\bDivSgankmo"]
&
[-2em]
\H{k-1}{\S,\gan}
\ar[r, leftarrow, shift right=\arrowdistance, "\bRotRottSgankkmo"]
& 
[0em]
\H{k}{\S,\gan}
\arrow[r, leftarrow, shift right=\arrowdistance, "-\bsymGradgank"]
& 
[0em]
\H{k}{\gan}
\arrow[r, leftarrow, shift right=\arrowdistance, "\iota_{\RM_{\gan}}"]
&
[-3em]
\RM_{\gan}
\end{tikzcd}
\end{equation*}
with associated domain complexes
\begin{equation*}
\scriptsize
\def\arrowlength{16ex}
\def\arrowdistance{0}
\begin{tikzcd}[column sep=\arrowlength]
\RM_{\gat}
\arrow[r, rightarrow, shift left=\arrowdistance, "\iota_{\RM_{\gat}}"] 
& 
[-3em]
\H{k}{\gat}(\symGrad)
\ar[r, rightarrow, shift left=\arrowdistance, "\symGradgatk"] 
&
[-1em]
\H{k,k-1}{\S,\gat}(\RotRott)
\ar[r, rightarrow, shift left=\arrowdistance, "\RotRottSgatkkmo"] 
& 
[0em]
\H{k-1}{\S,\gat}(\Div)
\arrow[r, rightarrow, shift left=\arrowdistance, "\DivSgatkmo"] 
& 
[-3em]
\H{k-1}{\gat}
\arrow[r, rightarrow, shift left=\arrowdistance, "\pi_{\RM_{\gan}}"] 
&
[-3em]
\RM_{\gan},
\end{tikzcd}
\end{equation*}
\vspace*{-5mm}
\begin{equation*}
\scriptsize
\def\arrowlength{16ex}
\def\arrowdistance{0}
\begin{tikzcd}[column sep=\arrowlength]
\RM_{\gat}
\arrow[r, leftarrow, shift right=\arrowdistance, "\pi_{\RM_{\gat}}"]
& 
[-3em]
\H{k-1}{\gat}
\ar[r, leftarrow, shift right=\arrowdistance, "-\bDivSgankmo"]
&
[-2em]
\bH{k-1}{\S,\gan}(\Div)
\ar[r, leftarrow, shift right=\arrowdistance, "\bRotRottSgankkmo"]
& 
[0em]
\bH{k,k-1}{\S,\gan}(\RotRott)
\arrow[r, leftarrow, shift right=\arrowdistance, "-\bsymGradgank"]
& 
[0em]
\bH{k}{\gan}(\symGrad)
\arrow[r, leftarrow, shift right=\arrowdistance, "\iota_{\RM_{\gan}}"]
&
[-3em]
\RM_{\gan}.
\end{tikzcd}
\end{equation*}
Here we have introduced the densely defined and closed linear operators
\begin{align*}
\symGradgatk:D(\symGradgatk)\subset\H{k}{\gat}(\om)&\to\H{k}{\S,\gat}(\om);
&
v&\mapsto\symGrad v,\\
\RotRottSgatk:D(\RotRottSgatk)\subset\H{k}{\S,\gat}(\om)&\to\H{k}{\S,\gat}(\om);
&
S&\mapsto\RotRott S,\\
\DivSgatk:D(\DivSgatk)\subset\H{k}{\S,\gat}(\om)&\to\H{k}{\gat}(\om);
&
T&\mapsto\Div T,\\
-\bDivSgank:D(\bDivSgank)\subset\H{k}{\S,\gan}(\om)&\to\H{k}{\gan}(\om);
&
S&\mapsto-\Div S,\\
\bRotRottSgank:D(\bRotRottSgank)\subset\H{k}{\S,\gan}(\om)&\to\H{k}{\S,\gan}(\om);
&
T&\mapsto\RotRott T,\\
-\bsymGradgank:D(\bsymGradgank)\subset\H{k}{\gan}(\om)&\to\H{k}{\S,\gan}(\om);
&
v&\mapsto-\symGrad v
\end{align*}
with domains of definition
\begin{align*}
D(\symGradgatk)&=\H{k}{\gat}(\symGrad,\om),
&
D(\bDivSgank)&=\bH{k}{\S,\gan}(\Div,\om),\\
D(\RotRottSgatk)&=\H{k}{\S,\gat}(\RotRott\!\!,\om),
&
D(\bRotRottSgank)&=\bH{k}{\S,\gan}(\RotRott\!\!,\om),\\
D(\DivSgatk)&=\H{k}{\S,\gat}(\Div,\om),
&
D(\bsymGradgank)&=\bH{k}{\gan}(\symGrad,\om),
\end{align*}
as well as 
\begin{align*}
\RotRottSgatkkmo:D(\RotRottSgatkkmo)\subset\H{k}{\S,\gat}(\om)&\to\H{k-1}{\S,\gat}(\om);
&
S&\mapsto\RotRott S,\\
\bRotRottSgankkmo:D(\bRotRottSgankkmo)\subset\H{k}{\S,\gan}(\om)&\to\H{k-1}{\S,\gan}(\om);
&
T&\mapsto\RotRott T
\end{align*}
with domains of definition
\begin{align*}
D(\RotRottSgatkkmo)&=\H{k,k-1}{\S,\gat}(\RotRott\!\!,\om),
&
D(\bRotRottSgankkmo)&=\bH{k,k-1}{\S,\gan}(\RotRott\!\!,\om).
\end{align*}

\subsection{Dirichlet/Neumann Fields}
\label{sec:dirneu}%

We also introduce the cohomology space of 
elastic Dirichlet/Neumann tensor fields (generalised harmonic tensors)
$$\Harm{}{\S,\gat,\gan,\eps}(\om)
:=N(\RotRottSgat)\cap N(\DivSgan\eps)
=\H{}{\S,\gat,0}(\RotRott,\om)\cap\eps^{-1}\H{}{\S,\gan,0}(\Div,\om).$$
Here, $\eps:\L{2}{\S}(\om)\to\L{2}{\S}(\om)$ is a symmetric and positive 
topological isomorphism (symmetric and positive bijective bounded linear operator),
which introduces a new inner product
$$\scp{\,\cdot\,}{\,\cdot\,}_{\L{2}{\S,\eps}(\om)}
:=\scp{\eps\,\cdot\,}{\,\cdot\,}_{\L{2}{\S}(\om)},$$
where $\L{2}{\S,\eps}(\om):=\L{2}{\S}(\om)$ (as linear space)
equipped with the inner product $\scp{\,\cdot\,}{\,\cdot\,}_{\L{2}{\S,\eps}(\om)}$.
Such \emph{weights} $\eps$ shall be called \emph{admissible}
and a typical example is given by a 
symmetric, $\L{\infty}{}$-bounded, and uniformly positive definite tensor field
$\eps:\om\to\reals^{(3\times3)\times(3\times3)}$.

\section{Elasticity Complexes II}
\label{sec:ela2}%

\subsection{Regular Potentials and Decompositions I}
\label{sec:regpotdeco1}%

\subsubsection{Extendable Domains}
\label{sec:regpotdecoextdom}%

\begin{theo}[regular potential operators for extendable domains]
\label{highorderregpotextdomela}
Let $(\om,\gat)$ be an extendable bounded strong Lipschitz pair
and let $k\geq0$. Then there exist bounded linear regular potential operators
\begin{align*}
\PotP_{\symGrad,\gat}^{k}:
\bH{k}{\S,\gat,0}(\RotRott,\om)
&\To\H{k+1}{\gat}(\om)\cap\H{k+1}{}(\rt),\\
\PotP_{\RotRottS,\gat}^{k}:
\bH{k}{\S,\gat,0}(\Div,\om)
&\To\H{k+2}{\S,\gat}(\om)\cap\H{k+2}{}(\rt),\\
\PotP_{\DivS,\gat}^{k}:
\H{k}{\gat}(\om)\cap(\RM_{\gan})^{\bot_{\L{2}{}(\om)}}
&\To\H{k+1}{\S,\gat}(\om)\cap\H{k+1}{}(\rt).
\end{align*}
In particular, $\PotP_{\dots}^{\dots}$ are right inverses for $\symGrad$, $\RotRott$, and $\Div$, respectively, i.e.,
\begin{align*}
\symGrad\PotP_{\symGrad,\gat}^{k}
&=\id_{\bH{k}{\S,\gat,0}(\RotRott,\om)},\\
\RotRott\PotP_{\RotRottS,\gat}^{k}
&=\id_{\bH{k}{\S,\gat,0}(\Div,\om)},\\
\Div\PotP_{\DivS,\gat}^{k}
&=\id_{\H{k}{\gat}(\om)\cap(\RM_{\gan})^{\bot_{\L{2}{}(\om)}}}.
\end{align*}
Without loss of generality, $\PotP_{\dots}^{\dots}$ 
map to tensor fields with a fixed compact support in $\rt$.
\end{theo}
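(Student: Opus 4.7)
My strategy is to mirror the construction of regular potentials for the de Rham complex carried out in \cite{PS2021a}, reducing each elasticity potential to a combination of de Rham potentials together with algebraic (symmetrisation) corrections. For each of the three operators $\PotP_{\symGrad,\gat}^{k}$, $\PotP_{\RotRottS,\gat}^{k}$, $\PotP_{\DivS,\gat}^{k}$, I would proceed in three phases: first extend the input field from $\om$ to a compactly supported tensor field on $\rt$ preserving both the Sobolev regularity and the relevant differential identity (vanishing $\RotRott$, vanishing $\Div$, or orthogonality to $\RM_{\gan}$); second, invert $\symGrad$, $\RotRott$, or $\Div$ on $\rt$ by a whole-space potential with the appropriate gain of Sobolev order ($+1$, $+2$, $+1$, respectively); third, restrict to $\om$ and verify that the $\gat$-boundary conditions are inherited. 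Lemma \ref{lem:weakeqstrongela} then allows me to switch freely between the weak and strong interpretations of the boundary conditions throughout.

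\textbf{Extension and whole-space inversion.} For the extension step I would use the extendability of $(\om,\gat)$ together with the de Rham extension operators of \cite{PS2021a}, applied row-by-row to the symmetric tensor and composed with symmetrisation where needed; in the divergence-free case the standard trick is to first construct a primitive on a slightly larger extendable pair and extend the primitive. For the whole-space inversion I would invoke classical integral formulas on compactly supported fields: a Cesaro/Saint-Venant formula provides a bounded right inverse of $\symGrad$ on symmetric tensors with vanishing $\RotRott$ and a one-derivative gain; a Beltrami/Michell stress-function construction provides a right inverse of $\RotRott$ on symmetric divergence-free tensors with a two-derivative gain; and a symmetric Bogovski-type construction provides a right inverse of $\Div$ with a one-derivative gain, under orthogonality to $\RM_{\gan}$. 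A fixed cut-off and the fact that all extensions can be arranged to vanish near $\gan$ ensure compact support in $\rt$ and preservation of the $\gat$-boundary conditions after restriction.

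\textbf{Main obstacle.} The hardest case is $\PotP_{\RotRottS,\gat}^{k}$, for three reasons: $\RotRott$ is second order, so the potential must gain two Sobolev orders; both the datum $T$ and the output $S$ must be symmetric, so a single application of a de Rham potential cannot suffice; and the divergence-free extension of $T$ from $\om$ to $\rt$ has to respect the symmetry. A robust way to handle this simultaneously is a BGG-type reduction to the de Rham complex as in \cite{AH2020a}: invert $\Rot$ row-wise on $T$ to obtain a (generally non-symmetric) tensor $M$ of regularity $\H{k+1}{}$, then symmetrise and correct $M$ by a $\symGrad$-term chosen so that $\RotRott$ of the resulting symmetric tensor still equals $T$. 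The technical heart of the proof is to verify that the correction can be realised by a bounded linear map, preserves the full two-order regularity gain, and does not disturb the $\gat$-boundary conditions inherited from the compactly supported extension; this bookkeeping, rather than any individual analytic estimate, is where I expect the main effort.
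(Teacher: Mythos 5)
Your three-phase plan diverges from the paper's argument in a way that leaves a genuine gap, and the first phase as stated is not achievable. The paper does \emph{not} extend the datum to a compactly supported, constraint-preserving field on all of $\rt$: such an extension (e.g.\ a compactly supported symmetric $\widetilde T$ with $\Div\widetilde T=0$ on $\rt$) is obstructed by moment conditions and is essentially as hard as the potential problem itself. Instead, the paper extends the datum \emph{by zero only through $\gat$} into the extended bounded domain $\widetilde\om=\om\cup\widehat\om$ --- this costs nothing, because it is exactly what the weak boundary condition in $\bH{k}{\S,\gat,0}(\cdot,\om)$ permits --- and then applies \cite[Theorem 4.2]{PZ2020b} (the empty/full boundary condition case, which is the black box your ``whole-space inversion'' phase is re-deriving) on the topologically trivial domain $\widetilde\om$.

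The step your proposal is missing is the mechanism by which the potential acquires the \emph{strong} $\gat$-boundary condition. The potential does not ``inherit'' it from the extension of the datum: the potential $\widetilde v$ (resp.\ $\widetilde S$, $\widetilde T$) produced on $\widetilde\om$ merely lies in the kernel of $\symGrad$ (resp.\ $\RotRott$, $\Div$) on the attached bulge $\widehat\om$, where the extended datum vanishes. One must then use exactness of the complex on the topologically trivial $\widehat\om$ to write this kernel element as a rigid motion (resp.\ a $\symGrad$, a $\RotRott$) on $\widehat\om$, extend that previous-stage potential to $\rt$, and \emph{subtract} it, so that the corrected potential vanishes identically on $\widehat\om$ and hence lies in $\H{k+1}{\gat}(\om)$, $\H{k+2}{\S,\gat}(\om)$, $\H{k+1}{\S,\gat}(\om)$ respectively. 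Your proposal replaces this correction by the remark that extensions ``can be arranged to vanish near $\gan$'' --- which is the wrong boundary part (the zero-extension happens through $\gat$, and it is on $\gat$ that the potential must satisfy boundary conditions) and in any case does not produce the required vanishing of the \emph{potential} on the bulge. Without this exactness-based correction the asserted mapping properties into $\H{k+1}{\gat}(\om)$ etc.\ do not follow. The BGG-type construction you sketch for the $\RotRott$ case addresses a different (already known) part of the problem, namely the construction for empty boundary conditions, not the new mixed-boundary-condition content of the theorem.
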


\begin{rem}
Note that $\A_{n}\PotP_{\A_{n}}=\id_{R(\A_{n})}$ is a general property 
of a (bounded regular) potential operator $\PotP_{\A_{n}}:R(\A_{n})\to\H{+}{n}$
with $\H{+}{n}\subset D(\A_{n})$.
\end{rem}

\begin{proof}[Proof of Theorem \ref{highorderregpotextdomela}]
In \cite[Theorem 4.2]{PZ2020b} we have shown the stated results for $\gat=\ga$ and $\gat=\emptyset$,
which is also a crucial ingredient of this proof.
Note that in these two special cases always ``\emph{strong $=$ weak}'' holds
as $\A_{n}^{**}=\ol{\A_{n}}=\A_{n}$, and that this argument fails in the remaining cases
of mixed boundary conditions. Therefore, let $\emptyset\varsubsetneq\gat\varsubsetneq\ga$.
Moreover, recall the notion of an extendable domain from \cite[Section 3]{PS2021a}.
In particular, $\widehat{\om}$ and the extended domain 
$\widetilde{\om}$ are topologically trivial.
\begin{itemize}
\item
Let $S\in\bH{k}{\S,\gat,0}(\RotRott,\om)$. 
By definition, $S$ can be extended through $\gat$
by zero to the larger domain $\widetilde{\om}$ yielding
$$\widetilde{S}\in\bH{k}{\S,\emptyset,0}(\RotRott,\widetilde{\om})
=\bH{k}{\S,0}(\RotRott,\widetilde{\om})
=\H{k}{\S,0}(\RotRott,\widetilde{\om}).$$
By \cite[Theorem 4.2]{PZ2020b} there exists $\widetilde{v}\in\H{k+1}{}(\rt)$
such that $\symGrad\widetilde{v}=\widetilde{S}$ in $\widetilde{\om}$.
Since $\widetilde{S}=0$ in $\widehat{\om}$, $\widetilde{v}$ must be a rigid motion 
$r\in\RM$ in $\widehat{\om}$. Far outside of $\widetilde{\om}$ we modify $r$
by a cut-off function such that the resulting vector field $\widetilde{r}$ 
is compactly supported and $\widetilde{r}|_{\widetilde{\om}}=r$.
Then $v:=\widetilde{v}-\widetilde{r}\in\H{k+1}{}(\rt)$
with $v|_{\widehat{\om}}=0$. Hence $v$ belongs to $\H{k+1}{\gat}(\om)$
and depends continuously on $S$. Moreover, $v$ satisfies
$\symGrad v=\symGrad\widetilde{v}=\widetilde{S}$ in $\widetilde{\om}$,
in particular $\symGrad v=S$ in $\om$.
We put $\PotP_{\symGrad,\gat}^{k}S:=v\in\H{k+1}{\gat}(\om)$.
\item
Let $T\in\bH{k}{\S,\gat,0}(\Div,\om)$. By definition, $T$ can be extended through $\gat$
by zero to $\widetilde{\om}$ giving
$$\widetilde{T}\in\bH{k}{\S,\emptyset,0}(\Div,\widetilde{\om})
=\bH{k}{\S,0}(\Div,\widetilde{\om})
=\H{k}{\S,0}(\Div,\widetilde{\om}).$$
By \cite[Theorem 4.2]{PZ2020b} there exists $\widetilde{S}\in\H{k+2}{\S}(\rt)$
such that $\RotRott\widetilde{S}=\widetilde{T}$ in $\widetilde{\om}$.
Since $\widetilde{T}=0$ in $\widehat{\om}$, i.e.,
$\widetilde{S}|_{\widehat{\om}}\in\H{k+2}{\S,0}(\RotRott,\widehat{\om})$,
we get again by \cite[Theorem 4.2]{PZ2020b}
(or the first part of this proof) $\widetilde{v}\in\H{k+3}{}(\rt)$
such that $\symGrad\widetilde{v}=\widetilde{S}$ in $\widehat{\om}$.
Then $S:=\widetilde{S}-\symGrad\widetilde{v}$
belongs to $\H{k+2}{\S}(\rt)$ and satisfies $S|_{\widehat{\om}}=0$.
Thus $S\in\H{k+2}{\S,\gat}(\om)$ and depends continuously on $T$.
Furthermore, $\RotRott S=\RotRott\widetilde{S}=\widetilde{T}$ in $\widetilde{\om}$,
in particular $\RotRott S=T$ in $\om$.
We set $\PotP_{\RotRottS,\gat}^{k}T:=S\in\H{k+2}{\S,\gat}(\om)$.
\item
Let $v\in\H{k}{\gat}(\om)$. By definition, $v$ can be extended through $\gat$
by zero to $\widetilde{\om}$ defining $\widetilde{v}\in\H{k}{}(\widetilde{\om})$.
\cite[Theorem 4.2]{PZ2020b} yields $\widetilde{T}\in\H{k+1}{\S}(\rt)$
such that $\Div\widetilde{T}=\widetilde{v}$ in $\widetilde{\om}$.
As $\widetilde{v}=0$ in $\widehat{\om}$, i.e.,
$\widetilde{T}|_{\widehat{\om}}\in\H{k+1}{\S,0}(\Div,\widehat{\om})$,
we get again by \cite[Theorem 4.2]{PZ2020b}
(or the second part of this proof) $\widetilde{S}\in\H{k+3}{\S}(\rt)$
such that $\RotRott\widetilde{S}=\widetilde{T}$ holds in $\widehat{\om}$.
Then $T:=\widetilde{T}-\RotRott\widetilde{S}$
belongs to $\H{k+1}{\S}(\rt)$ with $T|_{\widehat{\om}}=0$.
Hence $T$ belongs to $\H{k+1}{\S,\gat}(\om)$ and depends continuously on $v$.
Furthermore, $\Div T=\Div\widetilde{T}=\widetilde{v}$ in $\widetilde{\om}$,
in particular $\Div T=v$ in $\om$.
Finally, we define $\PotP_{\DivS,\gat}^{k}v:=T\in\H{k+1}{\S,\gat}(\om)$.
\end{itemize}
The assertion about the compact supports is trivial.
\end{proof}

As a simple consequence of Theorem \ref{highorderregpotextdomela} 
we obtain a few corollaries.

\begin{cor}[regular potentials for extendable domains]
\label{highorderregpotextdomelacor}
Let $(\om,\gat)$ be an extendable bounded strong Lipschitz pair
and let $k\geq0$. Then the regular potentials representations 
\begin{align*}
\bH{k}{\S,\gat,0}(\RotRott,\om)
=\H{k}{\S,\gat,0}(\RotRott,\om)
&=\symGrad\H{k}{\gat}(\symGrad,\om)
=\symGrad\H{k+1}{\gat}(\om)\\
&=R(\symGradgatk),\\
\bH{k}{\S,\gat,0}(\Div,\om)
=\H{k}{\S,\gat,0}(\Div,\om)
&=\RotRott\H{k}{\S,\gat}(\RotRott,\om)
=\RotRott\H{k+2}{\S,\gat}(\om)\\
&=\RotRott\H{k+1,k}{\S,\gat}(\RotRott,\om)\\
&=R(\RotRottSgatk)
=R(\RotRottSgatkpok),\\
\H{k}{\gat}(\om)\cap(\RM_{\gan})^{\bot_{\L{2}{}(\om)}}
&=\Div\H{k}{\S,\gat}(\Div,\om)
=\Div\H{k+1}{\S,\gat}(\om)\\
&=R(\DivSgatk)
\end{align*}
hold, and the potentials can be chosen such that they depend continuously on the data.
In particular, the latter spaces are closed subspaces of 
$\H{k}{\S}(\om)$ and $\H{k}{}(\om)$, respectively.
\end{cor}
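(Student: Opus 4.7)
The plan is a sandwich argument built on Theorem \ref{highorderregpotextdomela}. For each of the three chains I establish a string of inclusions of the form
\begin{align*}
\bH{k}{\cdots,\gat,0}(\cdots,\om)
\subset \text{(operator applied to the regular space)}
\subset R(\cdot)
\subset \H{k}{\cdots,\gat,0}(\cdots,\om)
\subset \bH{k}{\cdots,\gat,0}(\cdots,\om),
\end{align*}
which forces equality throughout. The first inclusion is precisely the right-inverse identity from Theorem \ref{highorderregpotextdomela}: for any element of the weak kernel on the left, $\PotP_{\dots,\gat}^{k}$ produces a preimage lying in the regular space. The middle inclusions hold because the regular space on the left embeds into the domain of the relevant closed operator. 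The last two inclusions are the complex property (recorded for both strong and weak spaces in Section \ref{sec:highsobolev}) and the trivial ``strong $\subset$ weak''.

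For the $\symGrad$ chain I use Lemma \ref{lem:weakeqstrongela}(i) to identify $D(\symGradgatk)=\H{k}{\gat}(\symGrad,\om)=\H{k+1}{\gat}(\om)$, which immediately makes the intermediate descriptions coincide. For the $\RotRott$ chain, a direct density argument gives $\H{k+2}{\S,\gat}(\om)\subset\H{k}{\S,\gat}(\RotRott\!\!,\om)\cap\H{k+1,k}{\S,\gat}(\RotRott\!\!,\om)$: any $\H{k+2}{\S,\gat}$-limit of fields in $\C{\infty}{\S,\gat}(\om)$ is also an $\H{}{\S}(\RotRott\!\!,\om)$-limit, while $\RotRott$ lowers the regularity by two. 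Hence $\RotRott\H{k+2}{\S,\gat}(\om)$ sits inside both $R(\RotRottSgatk)$ and $R(\RotRottSgatkpok)$, and the sandwich closes on $\bH{k}{\S,\gat,0}(\Div,\om)$ because $\PotP_{\RotRottS,\gat}^{k}$ maps into $\H{k+2}{\S,\gat}(\om)$. For the $\Div$ chain, an analogous density argument (or Lemma \ref{lem:weakeqstrongela}(ii)) gives $\H{k+1}{\S,\gat}(\om)\subset\H{k}{\S,\gat}(\Div,\om)$; the orthogonality to $\RM_{\gan}$ is built into the codomain of $\PotP_{\DivS,\gat}^{k}$ on one side, and on the other it follows by integration by parts against rigid motions in $\H{1}{\gan}(\om)$, a condition that is nontrivial only when $\gan=\emptyset$.

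Closedness of each range space as a subspace of $\H{k}{\S}(\om)$ or $\H{k}{}(\om)$ is then immediate: each is the range of a bounded linear map that admits a bounded right inverse (one of the $\PotP_{\dots,\gat}^{k}$), hence closed and even topologically complemented. Continuous dependence of the constructed potentials on the data is inherited directly from the boundedness of these right inverses. I do not anticipate a serious obstacle; once Theorem \ref{highorderregpotextdomela} is in hand, the entire corollary reduces to careful book-keeping of the boundary conditions and Sobolev orders across the intermediate regular spaces, with the only mild subtlety being the verification that $\H{k+2}{\S,\gat}(\om)$ embeds into the mixed-order space $\H{k+1,k}{\S,\gat}(\RotRott\!\!,\om)$ for all $k\geq0$.
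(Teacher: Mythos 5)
Your proposal is correct and is essentially the paper's own proof: the authors also run the sandwich $\bH{k}{\S,\gat,0}(\Div,\om)=\RotRott\PotP_{\RotRottS,\gat}^{k}\bH{k}{\S,\gat,0}(\Div,\om)\subset\RotRott\H{k+2}{\S,\gat}(\om)\subset\RotRott\H{k+1,k}{\S,\gat}(\RotRott,\om)\subset\RotRott\H{k}{\S,\gat}(\RotRott,\om)\subset\H{k}{\S,\gat,0}(\Div,\om)\subset\bH{k}{\S,\gat,0}(\Div,\om)$ and dispatch the other two chains as analogous. Your additional checks (the embedding of $\H{k+2}{\S,\gat}(\om)$ into the mixed-order space and the $\RM_{\gan}$-orthogonality via $\symGrad r=0$) are correct book-keeping that the paper leaves implicit.
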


\begin{proof}
By Theorem \ref{highorderregpotextdomela} we have 
\begin{align*}
\bH{k}{\S,\gat,0}(\Div,\om)
&=\RotRott\PotP_{\RotRottS,\gat}^{k}\bH{k}{\S,\gat,0}(\Div,\om)
\subset\RotRott\H{k+2}{\S,\gat}(\om)\\
&\subset\RotRott\H{k+1,k}{\S,\gat}(\RotRott,\om)
\subset\RotRott\H{k}{\S,\gat}(\RotRott,\om)\\
&\subset\H{k}{\S,\gat,0}(\Div,\om)
\subset\bH{k}{\S,\gat,0}(\Div,\om).
\end{align*}
The other identities follow analogously.
\end{proof}

\begin{cor}[regular decompositions for extendable domains]
\label{highorderregdecoextdomelacor}
Let $(\om,\gat)$ be an extendable bounded strong Lipschitz pair
and let $k\geq0$. Then the bounded regular decompositions 
\begin{align*}
\bH{k}{\S,\gat}(\RotRott,\om)
&=\H{k+2}{\S,\gat}(\om)
+\symGrad\H{k+1}{\gat}(\om)
=R(\PotP_{\RotRottS,\gat}^{k})
\dotplus\H{k}{\S,\gat,0}(\RotRott,\om)\\
&=R(\PotP_{\RotRottS,\gat}^{k})
\dotplus\symGrad\H{k+1}{\gat}(\om)\\
&=R(\PotP_{\RotRottS,\gat}^{k})
\dotplus\symGrad R(\PotP_{\symGrad,\gat}^{k}),\\
\bH{k}{\S,\gat}(\Div,\om)
&=\H{k+1}{\S,\gat}(\om)
+\RotRott\H{k+2}{\S,\gat}(\om)
=R(\PotP_{\DivS,\gat}^{k})
\dotplus\H{k}{\S,\gat,0}(\Div,\om)\\
&=R(\PotP_{\DivS,\gat}^{k})
\dotplus\RotRott\H{k+2}{\S,\gat}(\om)\\
&=R(\PotP_{\DivS,\gat}^{k})
\dotplus\RotRott R(\PotP_{\RotRottS,\gat}^{k})
\end{align*}
hold with bounded linear regular decomposition operators
\begin{align*}
\PotQ_{\RotRottS,\gat}^{k,1}:=\PotP_{\RotRottS,\gat}^{k}\RotRott:
\bH{k}{\S,\gat}(\RotRott,\om)&\to\H{k+2}{\S,\gat}(\om),\\
\PotQ_{\RotRottS,\gat}^{k,0}:=\PotP_{\symGrad,\gat}^{k}(1-\PotQ_{\RotRottS,\gat}^{k,1}):
\bH{k}{\S,\gat}(\RotRott,\om)&\to\H{k+1}{\gat}(\om),\\
\PotQ_{\DivS,\gat}^{k,1}:=\PotP_{\DivS,\gat}^{k}\Div:
\bH{k}{\S,\gat}(\Div,\om)&\to\H{k+1}{\S,\gat}(\om),\\
\PotQ_{\DivS,\gat}^{k,0}:=\PotP_{\RotRottS,\gat}^{k}(1-\PotQ_{\DivS,\gat}^{k,1}):
\bH{k}{\S,\gat}(\Div,\om)&\to\H{k+2}{\S,\gat}(\om)
\end{align*}
satisfying
\begin{align*}
\PotQ_{\RotRottS,\gat}^{k,1}
+\symGrad\PotQ_{\RotRottS,\gat}^{k,0}
&=\id_{\bH{k}{\S,\gat}(\RotRott,\om)},\\
\PotQ_{\DivS,\gat}^{k,1}
+\RotRott\PotQ_{\DivS,\gat}^{k,0}
&=\id_{\bH{k}{\S,\gat}(\Div,\om)}.
\end{align*}
\end{cor}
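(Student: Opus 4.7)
The plan is to apply the regular potential operators $\PotP_{\symGrad,\gat}^{k}$, $\PotP_{\RotRottS,\gat}^{k}$, $\PotP_{\DivS,\gat}^{k}$ from Theorem \ref{highorderregpotextdomela}, together with the range/kernel identities from Corollary \ref{highorderregpotextdomelacor}, in the standard ``right-inverse of the differential, then potential of the remainder'' construction. Boundedness of every $\PotQ_{\dots,\gat}^{k,\bullet}$ is then automatic from the boundedness of the $\PotP_{\dots,\gat}^{k}$'s together with the continuity of $\RotRott$ and $\Div$ between the relevant graph spaces, so the proof reduces to checking the algebraic decomposition identities and the directness of the sums.

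For the first decomposition, let $S \in \bH{k}{\S,\gat}(\RotRott,\om)$. The weak complex property $\RotRott\bH{k}{\S,\gat}(\RotRott,\om) \subset \bH{k}{\S,\gat,0}(\Div,\om)$ makes $\PotQ_{\RotRottS,\gat}^{k,1} S := \PotP_{\RotRottS,\gat}^{k}(\RotRott S) \in \H{k+2}{\S,\gat}(\om)$ well-defined and yields $\RotRott\PotQ_{\RotRottS,\gat}^{k,1} S = \RotRott S$. Hence $S - \PotQ_{\RotRottS,\gat}^{k,1} S \in \bH{k}{\S,\gat,0}(\RotRott,\om) = \symGrad\H{k+1}{\gat}(\om) = \symGrad R(\PotP_{\symGrad,\gat}^{k})$ by Corollary \ref{highorderregpotextdomelacor}, so $\PotQ_{\RotRottS,\gat}^{k,0} S := \PotP_{\symGrad,\gat}^{k}(S - \PotQ_{\RotRottS,\gat}^{k,1} S)$ is defined and produces the identity $\PotQ_{\RotRottS,\gat}^{k,1} + \symGrad\PotQ_{\RotRottS,\gat}^{k,0} = \id_{\bH{k}{\S,\gat}(\RotRott,\om)}$. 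Directness of the sum $R(\PotP_{\RotRottS,\gat}^{k}) \dotplus \bH{k}{\S,\gat,0}(\RotRott,\om)$ is immediate from $\RotRott\PotP_{\RotRottS,\gat}^{k} = \id$: any $S = \PotP_{\RotRottS,\gat}^{k} T$ with $\RotRott S = 0$ forces $T = 0$, hence $S = 0$. The remaining three summand representations are obtained from the range identities of Corollary \ref{highorderregpotextdomelacor}.

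The $\Div$ case proceeds in parallel, provided one first verifies the additional ingredient $\Div T \in \H{k}{\gat}(\om) \cap (\RM_{\gan})^{\bot_{\L{2}{}(\om)}}$ for every $T \in \bH{k}{\S,\gat}(\Div,\om)$, which is required before $\PotP_{\DivS,\gat}^{k}$ can be applied. If $\gan \neq \emptyset$ then $\RM_{\gan} = \{0\}$ and nothing is to check. If $\gan = \emptyset$ (so $\gat = \ga$), then each global rigid motion $r \in \RM$ belongs to $\C{\infty}{\emptyset}(\om) = \C{\infty}{\gan}(\om)$, and the defining weak identity of $\bH{}{\S,\gat}(\Div,\om)$ yields $\scp{\Div T}{r}_{\L{2}{}(\om)} = -\scp{T}{\symGrad r}_{\L{2}{}(\om)} = 0$ since $\symGrad r = 0$. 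With this check in place, setting $\PotQ_{\DivS,\gat}^{k,1} T := \PotP_{\DivS,\gat}^{k}(\Div T) \in \H{k+1}{\S,\gat}(\om)$ and observing $T - \PotQ_{\DivS,\gat}^{k,1} T \in \bH{k}{\S,\gat,0}(\Div,\om) = \RotRott R(\PotP_{\RotRottS,\gat}^{k})$ from Corollary \ref{highorderregpotextdomelacor} lets us define $\PotQ_{\DivS,\gat}^{k,0} T := \PotP_{\RotRottS,\gat}^{k}(T - \PotQ_{\DivS,\gat}^{k,1} T)$; directness of the corresponding sum then follows exactly as before from $\Div\PotP_{\DivS,\gat}^{k} = \id$. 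I expect this orthogonality-to-rigid-motions check in the full-boundary case to be the only non-bookkeeping point in the argument, with the rest amounting to straightforward applications of the previously established potentials.
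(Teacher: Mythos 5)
Your proposal is correct and follows essentially the same route as the paper's proof: apply $\PotP_{\RotRottS,\gat}^{k}$ (resp.\ $\PotP_{\DivS,\gat}^{k}$) to $\RotRott S$ (resp.\ $\Div T$) using the weak complex property, then feed the remainder, which lies in the weak kernel, into the next potential operator, with directness coming from $\RotRott\PotP_{\RotRottS,\gat}^{k}=\id$ and $\Div\PotP_{\DivS,\gat}^{k}=\id$. Your explicit verification that $\Div T\bot_{\L{2}{}(\om)}\RM_{\gan}$ in the case $\gan=\emptyset$ is a welcome detail that the paper leaves implicit under ``follow analogously.''
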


\begin{rem}
Note that for (bounded linear) potential operators $\PotP_{\A_{n-1}}$ and $\PotP_{\A_{n}}$ the identity
\begin{align*}
\PotQ_{\A_{n}}^{1}+\A_{n-1}\PotQ_{\A_{n}}^{0}&=\id_{D(\A_{n})}\quad\text{with}
&
\PotQ_{\A_{n}}^{1}:=\PotP_{\A_{n}}\A_{n}:D(\A_{n})&\to\H{+}{n},\\
&&
\PotQ_{\A_{n}}^{0}:=\PotP_{\A_{n-1}}(1-\PotQ_{\A_{n}}^{1}):D(\A_{n})&\to\H{+}{n-1}
\end{align*}
is a general structure of a (bounded) regular decomposition. Moreover:
\begin{itemize}
\item[\bf(i)]
$R(\PotQ_{\A_{n}}^{1})=R(\PotP_{\A_{n}})$ 
and $R(\PotQ_{\A_{n}}^{0})=R(\PotP_{\A_{n-1}})$.
\item[\bf(ii)]
$N(\A_{n})$ is invariant under $\PotQ_{\A_{n}}^{1}$,
as $\A_{n}=\A_{n}\PotQ_{\A_{n}}^{1}$ holds by the complex property.
\item[\bf(iii)]
$\PotQ_{\A_{n}}^{1}$ and $\A_{n-1}\PotQ_{\A_{n}}^{0}=1-\PotQ_{\A_{n}}^{1}$
are projections.
\item[\bf(iv)]
There exists $c>0$ such that for all $x\in D(\A_{n})$
$$\norm{\PotQ_{\A_{n}}^{1}x}_{\H{+}{n}}\leq c\norm{\A_{n}x}_{\H{}{n+1}}.$$
\item[\bf(iv')]
In particular, $\PotQ_{\A_{n}}^{1}|_{N(\A_{n})}=0$.
\end{itemize}
\end{rem}

\begin{cor}[weak and strong partial boundary conditions coincide for extendable domains]
\label{weakstrongextdomelacor}
Let $(\om,\gat)$ be an extendable bounded strong Lipschitz pair
and let $k\geq0$. Then weak and strong boundary conditions coincide, i.e.,
\begin{align*}
\bH{k}{\gat}(\symGrad,\om)
&=\H{k}{\gat}(\symGrad,\om)
=\H{k+1}{\gat}(\om)
=\bH{k+1}{\gat}(\om),\\
\bH{k}{\S,\gat}(\RotRott,\om)
&=\H{k}{\S,\gat}(\RotRott,\om),\\
\bH{k}{\S,\gat}(\Div,\om)
&=\H{k}{\S,\gat}(\Div,\om).
\end{align*}
\end{cor}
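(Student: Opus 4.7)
\emph{Proof plan.} The outer chain $\H{k}{\gat}(\symGrad,\om) = \H{k+1}{\gat}(\om) = \bH{k+1}{\gat}(\om)$ is immediate from Korn's inequality (first equality) and Lemma \ref{lem:weakeqstrongderham} (second equality). Since ``\emph{strong $\subset$ weak}'' holds trivially, only the reverse inclusion remains to be shown in each of the three cases; the plan splits into the $\RotRott$/$\Div$-cases on the one hand and the $\symGrad$-case on the other.

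For the $\RotRott$- and $\Div$-cases the plan is to read off the reverse inclusion directly from the regular decompositions of Corollary \ref{highorderregdecoextdomelacor}. Given $S \in \bH{k}{\S,\gat}(\RotRott,\om)$, one writes $S = T + \symGrad v$ with $T \in \H{k+2}{\S,\gat}(\om)$ and $v \in \H{k+1}{\gat}(\om)$; it then suffices to check that each summand already lies in the strong space $\H{k}{\S,\gat}(\RotRott,\om)$. This is a density check: $\C{\infty}{\S,\gat}(\om)$-approximants of $T$ in the norm of $\H{k+2}{}(\om)$ are automatically approximants in the graph norm of $\H{k}{\S,\gat}(\RotRott,\om)$ since $\RotRott$ has order two, and $\symGrad$ applied to $\C{\infty}{\gat}(\om)$-approximants of $v$ in $\H{k+1}{}(\om)$ produces $\C{\infty}{\S,\gat}(\om)$-approximants of $\symGrad v$ in $\H{k}{\S}(\om)$ with identically vanishing $\RotRott$. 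The $\Div$-case is dispatched analogously, via the decomposition $T = M + \RotRott W$ with $M \in \H{k+1}{\S,\gat}(\om)$ and $W \in \H{k+2}{\S,\gat}(\om)$.

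For the $\symGrad$-case the range $k \geq 1$ is already part of Lemma \ref{lem:weakeqstrongela}(ii). The remaining case $k = 0$ is the principal obstacle -- neither the regular decomposition nor that lemma covers it directly -- and the plan is to bypass it by duality. The operator $\A_0 = \symGradgat$ is densely defined and closed with Hilbert adjoint $\A_0^* = -\bDivSgan$ on $D(\A_0^*) = \bH{}{\S,\gan}(\Div,\om)$; by the just-proved $\Div$-case applied to the pair $(\om,\gan)$ (which is extendable alongside $(\om,\gat)$), this weak domain coincides with the strong $\H{}{\S,\gan}(\Div,\om)$. Invoking von Neumann's identity $\A_0 = \A_0^{**}$ together with a short integration-by-parts calculation that uses the density of $\C{\infty}{\S,\gan}(\om)$ in $\H{}{\S,\gan}(\Div,\om)$ to identify $D(\A_0^{**})$ with $\bH{}{\gat}(\symGrad,\om)$ then yields $\H{}{\gat}(\symGrad,\om) = D(\A_0) = D(\A_0^{**}) = \bH{}{\gat}(\symGrad,\om)$. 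The ordering of the steps -- first the $\Div$-case (with the $\gan$-variant), then the $\symGrad$-case at $k=0$ via duality -- is therefore forced by the structure of the argument.
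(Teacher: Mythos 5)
Your treatment of the $\RotRott$- and $\Div$-cases is essentially the paper's own argument: the decomposition $S=T+\symGrad v$ with $T\in\H{k+2}{\S,\gat}(\om)$ and $v\in\H{k+1}{\gat}(\om)$ is exactly what Theorem \ref{highorderregpotextdomela} produces (via $T=\PotP_{\RotRottS,\gat}^{k}\RotRott S$ and $v=\PotP_{\symGrad,\gat}^{k}(S-T)$), and your density check amounts to the inclusion $\H{k+2}{\S,\gat}(\om)+\symGrad\H{k+1}{\gat}(\om)\subset\H{k}{\S,\gat}(\RotRott,\om)$ that the paper uses implicitly. That part is sound, and making the order-two density argument explicit is a reasonable addition.

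The gap is in the $\symGrad$-case at $k=0$. Your duality argument requires the density of $\C{\infty}{\S,\gan}(\om)$ in $D(\A_{0}^{*})=\bH{}{\S,\gan}(\Div,\om)$ with respect to the graph norm, i.e., $\bH{}{\S,\gan}(\Div,\om)=\H{}{\S,\gan}(\Div,\om)$, and you obtain this by applying the already-proved $\Div$-case to the pair $(\om,\gan)$, asserting parenthetically that $(\om,\gan)$ ``is extendable alongside $(\om,\gat)$''. This is not justified and does not follow from the definition of an extendable pair in \cite{PS2021a}: extendability is an asymmetric notion (one attaches a topologically trivial $\widehat{\om}$ to $\om$ along $\gat$ and requires the union $\widetilde{\om}$ to be topologically trivial), and the corollary's hypothesis concerns only $(\om,\gat)$. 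Nothing proved at this stage gives the $\gan$-version of the $\Div$-result; the statements with $\gat$ and $\gan$ interchanged only become available after the localization of Lemma \ref{lem:highorderregdecoela}, which holds for general strong Lipschitz pairs. Moreover, the detour is unnecessary: for $v\in\bH{k}{\gat}(\symGrad,\om)$ the weak complex property gives $\symGrad v\in\bH{k}{\S,\gat,0}(\RotRott,\om)$, so Theorem \ref{highorderregpotextdomela} applies directly and yields $\widehat{v}:=\PotP_{\symGrad,\gat}^{k}\symGrad v\in\H{k+1}{\gat}(\om)$ with $\symGrad\widehat{v}=\symGrad v$; then $r:=v-\widehat{v}\in\RM$, which vanishes if $\gat\neq\emptyset$ (its zero extension through $\gat$ is a rigid motion on the connected set $\widetilde{\om}$ vanishing on $\widehat{\om}$) and lies in $\H{k+1}{\emptyset}(\om)$ trivially if $\gat=\emptyset$. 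This is the paper's route, works uniformly for all $k\geq0$, and should replace your duality step.
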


\begin{proof}[Proof of Corollary \ref{highorderregdecoextdomelacor} and Corollary \ref{weakstrongextdomelacor}]
Let us pick $S\in\bH{k}{\S,\gat}(\RotRott,\om)$. By Theorem \ref{highorderregpotextdomela} we have 
$\RotRott S\in\bH{k}{\S,\gat,0}(\Div,\om)$ and 
$\widehat{S}:=\PotP_{\RotRottS,\gat}^{k}\RotRott S\in\H{k+2}{\S,\gat}$.
Hence, we obtain $S-\widehat{S}\in\bH{k}{\S,\gat,0}(\RotRott,\om)$
and Theorem \ref{highorderregpotextdomela} shows
$v:=\PotP_{\symGrad,\gat}^{k}(S-\widehat{S})\in\H{k+1}{\gat}(\om)$ 
and thus 
$$S=\widehat{S}+\symGrad v
\in\H{k+2}{\S,\gat}(\om)+\symGrad\H{k+1}{\gat}(\om)
\subset\H{k}{\S,\gat}(\RotRott,\om).$$
For the directness let 
$S=\PotP_{\RotRottS,\gat}^{k}T\in\bH{k}{\S,\gat,0}(\RotRott,\om)$ 
with some $T\in\bH{k}{\S,\gat,0}(\Div,\om)$.
Then $0=\RotRott S=T$ and thus $S=0$.
The assertions about the corresponding $\Div$-spaces follow analogously.
Let $v\in\bH{k}{\gat}(\symGrad,\om)$. Then $\symGrad v\in\bH{k}{\S,\gat,0}(\RotRott,\om)$
and Theorem \ref{highorderregpotextdomela} yields
$\widehat{v}:=\PotP_{\symGrad,\gat}^{k}\symGrad v\in\H{k+1}{\gat}(\om)$.
As $\symGrad(v-\widehat{v})=0$, we have $v-\widehat{v}=:r\in\RM$,
which even vanishes if $\gat\neq\emptyset$. Hence, 
$v=\widehat{v}+r\in\H{k+1}{\gat}(\om)$.
\end{proof}

By similar arguments we also obtain the following (non-standard) 
versions of Corollary \ref{highorderregdecoextdomelacor} and Corollary \ref{weakstrongextdomelacor}.

\begin{cor}[Corollary \ref{highorderregdecoextdomelacor} and Corollary \ref{weakstrongextdomelacor}
for non-standard Sobolev spaces]
\label{highorderregdecoextdomelacornonstandard}
Let $(\om,\gat)$ be an extendable bounded strong Lipschitz pair
and let $k\geq1$. Then the bounded regular decompositions 
\begin{align*}
\bH{k,k-1}{\S,\gat}(\RotRott,\om)
&=\H{k+1}{\S,\gat}(\om)
+\symGrad\H{k+1}{\gat}(\om)
=R(\PotP_{\RotRottS,\gat}^{k-1})
\dotplus\H{k}{\S,\gat,0}(\RotRott,\om)\\
&=R(\PotP_{\RotRottS,\gat}^{k-1})
\dotplus\symGrad\H{k+1}{\gat}(\om)\\
&=R(\PotP_{\RotRottS,\gat}^{k-1})
\dotplus\symGrad R(\PotP_{\symGrad,\gat}^{k})
=\H{k,k-1}{\S,\gat}(\RotRott,\om)
\end{align*}
hold with bounded linear regular decomposition operators
\begin{align*}
\PotQ_{\RotRottS,\gat}^{k,k-1,1}:=\PotP_{\RotRottS,\gat}^{k-1}\RotRott:
\bH{k,k-1}{\S,\gat}(\RotRott,\om)&\to\H{k+1}{\S,\gat}(\om),\\
\PotQ_{\RotRottS,\gat}^{k,k-1,0}:=\PotP_{\symGrad,\gat}^{k}(1-\PotQ_{\RotRottS,\gat}^{k,k-1,1}):
\bH{k,k-1}{\S,\gat}(\RotRott,\om)&\to\H{k+1}{\gat}(\om)
\end{align*}
satisfying
$\PotQ_{\RotRottS,\gat}^{k,k-1,1}
+\symGrad\PotQ_{\RotRottS,\gat}^{k,k-1,0}
=\id_{\bH{k,k-1}{\S,\gat}(\RotRott,\om)}$.
In particular, weak and strong boundary conditions coincide also for the non-standard Sobolev spaces.
\end{cor}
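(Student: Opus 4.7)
The strategy is to adapt the proof of Corollary \ref{highorderregdecoextdomelacor} verbatim, with the single modification that the $\RotRott$-potential operator must be applied at one Sobolev order lower, because in the non-standard space $\bH{k,k-1}{\S,\gat}(\RotRott,\om)$ the datum $\RotRott S$ lives only in $\H{k-1}{\gat}(\om)$.

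Pick $S\in\bH{k,k-1}{\S,\gat}(\RotRott,\om)$. By the complex property on the weak spaces we have $\RotRott S\in\bH{k-1}{\S,\gat,0}(\Div,\om)$, hence Theorem \ref{highorderregpotextdomela} applied with index $k-1$ produces
$$\widehat{S}:=\PotP_{\RotRottS,\gat}^{k-1}\RotRott S\in\H{k+1}{\S,\gat}(\om),\qquad \RotRott\widehat{S}=\RotRott S.$$
Then $S-\widehat{S}\in\bH{k}{\S,\gat,0}(\RotRott,\om)=\H{k}{\S,\gat,0}(\RotRott,\om)$ by Corollary \ref{highorderregpotextdomelacor}, and Theorem \ref{highorderregpotextdomela} applied with index $k$ yields
$$v:=\PotP_{\symGrad,\gat}^{k}(S-\widehat{S})\in\H{k+1}{\gat}(\om),\qquad \symGrad v=S-\widehat{S}.$$
Therefore $S=\widehat{S}+\symGrad v$ with $\widehat{S}\in R(\PotP_{\RotRottS,\gat}^{k-1})\subset\H{k+1}{\S,\gat}(\om)$ and $\symGrad v\in\symGrad R(\PotP_{\symGrad,\gat}^{k})\subset\symGrad\H{k+1}{\gat}(\om)\subset\H{k}{\S,\gat,0}(\RotRott,\om)$. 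This proves the first two sum representations and yields the chain of inclusions $\bH{k,k-1}{\S,\gat}(\RotRott,\om)\subset\H{k+1}{\S,\gat}(\om)+\symGrad\H{k+1}{\gat}(\om)\subset\H{k,k-1}{\S,\gat}(\RotRott,\om)$; combined with the trivial reverse inclusion this gives the identity of the weak and strong non-standard spaces.

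For directness of the sums, suppose $S=\PotP_{\RotRottS,\gat}^{k-1}T\in\H{k}{\S,\gat,0}(\RotRott,\om)$ with some $T\in\bH{k-1}{\S,\gat,0}(\Div,\om)$; then $0=\RotRott S=T$ and hence $S=0$, exactly as in the proof of Corollary \ref{highorderregdecoextdomelacor}. The definitions of $\PotQ_{\RotRottS,\gat}^{k,k-1,1}$ and $\PotQ_{\RotRottS,\gat}^{k,k-1,0}$ as given in the statement yield bounded linear operators by the boundedness of $\PotP_{\RotRottS,\gat}^{k-1}$ and $\PotP_{\symGrad,\gat}^{k}$ provided by Theorem \ref{highorderregpotextdomela}, and the decomposition identity $\PotQ_{\RotRottS,\gat}^{k,k-1,1}+\symGrad\PotQ_{\RotRottS,\gat}^{k,k-1,0}=\id$ is immediate from the construction $S=\widehat{S}+\symGrad v$.

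There is no genuine obstacle here, only bookkeeping: the only point requiring care is the correct pairing of indices, namely that $\RotRott S$ has $\H{k-1}{\gat}$-regularity so one must invoke Theorem \ref{highorderregpotextdomela} at level $k-1$ (yielding a potential in $\H{k+1}{\S,\gat}$ rather than $\H{k+2}{\S,\gat}$), while the gradient potential for the exact remainder $S-\widehat{S}$ is still applied at level $k$. Once this is set up, all other assertions (the two equivalent non-standard decompositions, the identification with $R(\PotP_{\symGrad,\gat}^{k})$, and the weak-equals-strong conclusion) follow by exactly the argument of the proof of Corollary \ref{highorderregdecoextdomelacor} and Corollary \ref{weakstrongextdomelacor}.
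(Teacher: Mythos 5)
Your proof is correct and follows exactly the route the paper intends: the paper gives no separate proof for this corollary, stating only that it follows ``by similar arguments'' from the proofs of Corollary \ref{highorderregdecoextdomelacor} and Corollary \ref{weakstrongextdomelacor}, and your argument is precisely that adaptation with the right index bookkeeping (applying $\PotP_{\RotRottS,\gat}^{k-1}$ to $\RotRott S\in\bH{k-1}{\S,\gat,0}(\Div,\om)$ and $\PotP_{\symGrad,\gat}^{k}$ to the $\RotRott$-free remainder). The directness argument and the weak-equals-strong conclusion also match the paper's template.
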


Recall the Hilbert complexes and cohomology groups from Section \ref{sec:elacomplexes} and Section \ref{sec:dirneu}.

\begin{theo}[closed and exact Hilbert complexes for extendable domains]
\label{theo:closedhilcom}
Let $(\om,\gat)$ be an extendable bounded strong Lipschitz pair
and let $k\geq0$. The domain complexes of linear elasticity
\begin{equation*}
\scriptsize
\def\arrowlength{16ex}
\def\arrowdistance{0}
\begin{tikzcd}[column sep=\arrowlength]
\RM_{\gat}
\arrow[r, rightarrow, shift left=\arrowdistance, "\iota_{\RM_{\gat}}"] 
& 
[-3em]
\H{k+1}{\gat}
\ar[r, rightarrow, shift left=\arrowdistance, "\symGradgatk"] 
&
[-1em]
\H{k}{\S,\gat}(\RotRott)
\ar[r, rightarrow, shift left=\arrowdistance, "\RotRottSgatk"] 
& 
[-1em]
\H{k}{\S,\gat}(\Div)
\arrow[r, rightarrow, shift left=\arrowdistance, "\DivSgatk"] 
& 
[-2em]
\H{k}{\gat}
\arrow[r, rightarrow, shift left=\arrowdistance, "\pi_{\RM_{\gan}}"] 
&
[-2em]
\RM_{\gan},
\end{tikzcd}
\end{equation*}
\vspace*{-5mm}
\begin{equation*}
\scriptsize
\def\arrowlength{16ex}
\def\arrowdistance{0}
\begin{tikzcd}[column sep=\arrowlength]
\RM_{\gat}
\arrow[r, leftarrow, shift right=\arrowdistance, "\pi_{\RM_{\gat}}"]
& 
[-3em]
\H{k}{\gat}
\ar[r, leftarrow, shift right=\arrowdistance, "-\DivSgank"]
&
[-2em]
\H{k}{\S,\gan}(\Div)
\ar[r, leftarrow, shift right=\arrowdistance, "\RotRottSgank"]
& 
[-1em]
\H{k}{\S,\gan}(\RotRott)
\arrow[r, leftarrow, shift right=\arrowdistance, "-\symGradgank"]
& 
[0em]
\H{k+1}{\gan}
\arrow[r, leftarrow, shift right=\arrowdistance, "\iota_{\RM_{\gan}}"]
&
[-2em]
\RM_{\gan},
\end{tikzcd}
\end{equation*}
and, for $k\geq1$,
\begin{equation*}
\scriptsize
\def\arrowlength{16ex}
\def\arrowdistance{0}
\begin{tikzcd}[column sep=\arrowlength]
\RM_{\gat}
\arrow[r, rightarrow, shift left=\arrowdistance, "\iota_{\RM_{\gat}}"] 
& 
[-3em]
\H{k+1}{\gat}
\ar[r, rightarrow, shift left=\arrowdistance, "\symGradgatk"] 
&
[-1em]
\H{k,k-1}{\S,\gat}(\RotRott)
\ar[r, rightarrow, shift left=\arrowdistance, "\RotRottSgatkkmo"] 
& 
[0em]
\H{k-1}{\S,\gat}(\Div)
\arrow[r, rightarrow, shift left=\arrowdistance, "\DivSgatkmo"] 
& 
[-2em]
\H{k-1}{\gat}
\arrow[r, rightarrow, shift left=\arrowdistance, "\pi_{\RM_{\gan}}"] 
&
[-2em]
\RM_{\gan},
\end{tikzcd}
\end{equation*}
\vspace*{-5mm}
\begin{equation*}
\scriptsize
\def\arrowlength{16ex}
\def\arrowdistance{0}
\begin{tikzcd}[column sep=\arrowlength]
\RM_{\gat}
\arrow[r, leftarrow, shift right=\arrowdistance, "\pi_{\RM_{\gat}}"]
& 
[-2em]
\H{k-1}{\gat}
\ar[r, leftarrow, shift right=\arrowdistance, "-\DivSgankmo"]
&
[-1em]
\H{k-1}{\S,\gan}(\Div)
\ar[r, leftarrow, shift right=\arrowdistance, "\RotRottSgankkmo"]
& 
[0em]
\H{k,k-1}{\S,\gan}(\RotRott)
\arrow[r, leftarrow, shift right=\arrowdistance, "-\symGradgank"]
& 
[0em]
\H{k+1}{\gan}
\arrow[r, leftarrow, shift right=\arrowdistance, "\iota_{\RM_{\gan}}"]
&
[-2em]
\RM_{\gan}
\end{tikzcd}
\end{equation*}
are exact and closed Hilbert complexes.
In particular, all ranges are closed, 
all cohomology groups (Dirichlet/Neumann fields) are trivial,
and the operators from Theorem \ref{highorderregpotextdomela}
are associated bounded regular potential operators.
\end{theo}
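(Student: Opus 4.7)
The plan is to assemble the statement from already-established pieces: the bounded regular potential operators of Theorem~\ref{highorderregpotextdomela}, the kernel/range identifications of Corollary~\ref{highorderregpotextdomelacor}, the weak-strong equivalences of Corollary~\ref{weakstrongextdomelacor}, and the corresponding non-standard versions in Corollary~\ref{highorderregdecoextdomelacornonstandard}. Since weak and strong boundary conditions coincide under the extendability hypothesis, each of the four listed domain complexes agrees with its formally dual counterpart from Section~\ref{sec:elacomplexes} after replacing weak spaces by strong ones. Moreover, once the primal complexes are handled, the dual complexes follow by the same argument with $\gat$ and $\gan$ interchanged.

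Next, since the complex properties $R(\A_{n-1})\subset N(\A_n)$ are already in place from Sections~\ref{sec:sobolev} and~\ref{sec:sobolevmore}, only the reverse inclusions — exactness — remain to be verified. Working left to right in the primal standard complex, the kernel of $\symGradgatk$ consists of rigid motions, hence equals $\RM_{\gat}$ by the definition of the latter. The potential representations in Corollary~\ref{highorderregpotextdomelacor} then identify $N(\RotRottSgatk)=\H{k}{\S,\gat,0}(\RotRott,\om)=\symGrad\H{k+1}{\gat}(\om)=R(\symGradgatk)$, next $N(\DivSgatk)=\H{k}{\S,\gat,0}(\Div,\om)=\RotRott\H{k}{\S,\gat}(\RotRott,\om)=R(\RotRottSgatk)$, and finally $R(\DivSgatk)=\H{k}{\gat}(\om)\cap\RM_{\gan}^{\bot_{\L{2}{}(\om)}}=N(\pi_{\RM_{\gan}})$, while exactness at the two endpoints is immediate from the definitions of $\iota_{\RM_{\gat}}$ and $\pi_{\RM_{\gan}}$. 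The $(k,k-1)$-variant is handled identically, now invoking additionally the identity $R(\RotRottSgatkpok)=R(\RotRottSgatk)$ from Corollary~\ref{highorderregpotextdomelacor} together with Corollary~\ref{highorderregdecoextdomelacornonstandard}.

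Finally, closedness of every nontrivial range is immediate since each such range is the image of a bounded linear right inverse furnished by Theorem~\ref{highorderregpotextdomela}; triviality of every cohomology group is just a restatement of exactness; and the identification of the $\PotP$-operators as associated regular potential operators is precisely what Theorem~\ref{highorderregpotextdomela} and Corollary~\ref{highorderregpotextdomelacor} provide. The only real difficulty here is bureaucratic rather than analytic: one has to match up the eight interior positions across the four diagrams with the appropriate identity from Corollary~\ref{highorderregpotextdomelacor} and to verify — via Corollaries~\ref{weakstrongextdomelacor} and~\ref{highorderregdecoextdomelacornonstandard} — that no distinction between the weak and strong spaces survives in the final statement. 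All the analytic work has already been done in the preceding results.
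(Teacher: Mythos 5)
Your proposal is correct and takes essentially the same route as the paper: Theorem~\ref{theo:closedhilcom} is stated there without a separate proof precisely because it is the immediate assembly of Theorem~\ref{highorderregpotextdomela} with Corollaries~\ref{highorderregpotextdomelacor}, \ref{highorderregdecoextdomelacor}, \ref{weakstrongextdomelacor}, and \ref{highorderregdecoextdomelacornonstandard} that you describe (kernels equal ranges via the regular potential representations, closedness from those same identifications, and the dual complexes by exchanging $\gat$ and $\gan$).
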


\subsubsection{General Strong Lipschitz Domains}
\label{sec:regpotdecogendom}%

Similar to \cite[Lemma 4.8]{PZ2020b} we get the following.

\begin{lem}[cutting lemma]
\label{lem:cutlem}
Let $\varphi\in\C{\infty}{}(\rt)$ and let $k\geq0$.
\begin{itemize}
\item[\bf(i)]
If $T\in\bH{k}{\S,\gat}(\Div,\om)$, then
$\varphi T\in\bH{k}{\S,\gat}(\Div,\om)$ and 
$\Div(\varphi T)=\varphi\Div T+ T\grad\varphi$ 
holds.
\item[\bf(ii)]
If $k\geq1$ and $S\in\bH{k,k-1}{\S,\gat}(\RotRott,\om)$, then 
$\varphi S\in\bH{k,k-1}{\S,\gat}(\RotRott,\om)$ and 
\begin{align*}
\RotRott(\varphi S)
=\varphi\RotRott S
+2\sym\big((\spn\grad\varphi)\Rot S\big)
+\Psi(\Grad\grad\varphi,S)
\end{align*}
holds with an algebraic operator $\Psi$.
In particular, this holds for $S\in\bH{k}{\S,\gat}(\RotRott,\om)$.
\end{itemize}
\end{lem}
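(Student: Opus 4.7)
The plan is to reduce to the smooth-field product-rule identities established in \cite[Lemma 4.8]{PZ2020b} and to check separately that multiplication by a smooth scalar $\varphi$ preserves the mixed partial boundary conditions, both strong and weak.

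For \textbf{(i)}, the distributional identity $\Div(\varphi T) = \varphi\Div T + T\grad\varphi$ follows from a direct computation needing only $T \in \L{2}{\S}(\om)$ with $\Div T \in \L{2}{}(\om)$. For $k \geq 1$, I would invoke Lemma \ref{lem:weakeqstrongela} to identify $\bH{k}{\S,\gat}(\Div,\om)$ with $\H{k}{\S,\gat}(\Div,\om)$ and approximate $T$ by $T_n \in \C{\infty}{\S,\gat}(\om)$ in the graph norm; the multiplicands $\varphi T_n$ remain in $\C{\infty}{\S,\gat}(\om)$ and the Leibniz identity passes to the limit. For $k = 0$, the weak boundary condition is verified directly: $\varphi\cdot\C{\infty}{\gan}(\om) \subset \C{\infty}{\gan}(\om)$, so testing the weak identity for $T$ against a vector field $\varphi\phi$, expanding $\symGrad(\varphi\phi) = \varphi\,\symGrad\phi + \sym\bigl(\phi(\grad\varphi)^\top\bigr)$ by the Leibniz rule, and using $T = T^\top$ to absorb the symmetrisation, yields the weak identity for $\varphi T$.

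For \textbf{(ii)}, I would first establish the product-rule identity on smooth $S$ by a direct row-wise Leibniz computation for $\Rot$ applied twice; the resulting terms split by the number of derivatives landing on $\varphi$, giving $\varphi\,\RotRott S$, the first-order cross term $2\sym\bigl((\spn\grad\varphi)\Rot S\bigr)$, and the algebraic remainder $\Psi(\Grad\grad\varphi, S)$. Writing $L := \RotRott\,M_\varphi - M_\varphi\,\RotRott$ for the commutator, the identity reads $\RotRott(\varphi S) = \varphi\,\RotRott S + LS$ and extends to general $S$ in the distributional sense. For the Sobolev regularity I would use that $S \in \H{k}{\gat}(\om)$ implies $\Rot S \in \H{k-1}{\gat}(\om)$ (apply $\partial^{\alpha}$ to approximants in $\C{\infty}{\S,\gat}(\om)$) together with the observation that smooth multipliers $\varphi$, $\spn\grad\varphi$, $\Grad\grad\varphi$ preserve the strong $\gat$-boundary condition; this places $\RotRott(\varphi S) \in \H{k-1}{\gat}(\om)$ and $\varphi S \in \H{k}{\gat}(\om)$. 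For $k \geq 2$, Lemma \ref{lem:weakeqstrongela2} closes the argument, and the ``In particular'' clause for $\bH{k}{\S,\gat}(\RotRott,\om)$ follows from $\bH{k}{\S,\gat}(\RotRott,\om) \subset \bH{k,k-1}{\S,\gat}(\RotRott,\om)$.

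The main obstacle will be the weak $\RotRott$-boundary condition for $\varphi S$ in the low-regularity case. For $\Psi_0 \in \C{\infty}{\S,\gan}(\om)$ one has $\varphi\Psi_0 \in \C{\infty}{\S,\gan}(\om)$; substituting the product-rule identity into both sides of the target equality and applying the weak defining identity of $S$ tested against $\varphi\Psi_0$, the desired $\scp{\RotRott(\varphi S)}{\Psi_0}_{\L{2}{}(\om)} = \scp{\varphi S}{\RotRott\Psi_0}_{\L{2}{}(\om)}$ collapses to the single algebraic requirement
\begin{equation*}
\scp{LS}{\Psi_0}_{\L{2}{}(\om)} + \scp{S}{L\Psi_0}_{\L{2}{}(\om)} = 0.
\end{equation*}
Since $\RotRott$ is formally self-adjoint on $\L{2}{\S}(\om)$ and $M_\varphi$ is trivially self-adjoint, $L$ is formally antisymmetric; moreover, the commutator of a second-order differential operator with a zeroth-order multiplication has order at most one, so $L$ is of first order. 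A single integration by parts thus leaves a zeroth-order boundary integral over $\gat \cup \gan$, which vanishes because $\Psi_0$ vanishes near $\gan$ and $S \in \H{k}{\gat}(\om)$ with $k \geq 1$ has vanishing trace on $\gat$. Identifying this cancellation and verifying the vanishing of the resulting boundary contribution is the delicate step; the rest is Leibniz bookkeeping.
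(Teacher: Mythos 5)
Your proposal is correct, and it supplies precisely the details that the paper delegates to the citation of \cite[Lemma 4.8]{PZ2020b} (which treats $\gat\in\{\emptyset,\ga\}$, where weak and strong conditions are already known to coincide): the distributional Leibniz identities are unchanged, and the only genuinely new content for mixed boundary conditions is that multiplication by $\varphi$ preserves the \emph{weak} partial boundary conditions. Your mechanism for this is the right one: $\C{\infty}{\gan}(\om)$ is invariant under multiplication by smooth functions on $\rt$, and for (ii) the verification collapses to the formal skew-symmetry of the first-order commutator $L$ of $\RotRott$ with multiplication by $\varphi$, tested between $S$ and $\Psi_{0}$.

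Two points should be tightened. First, in (i) for $k\geq1$ you propose approximating in the graph norm of $\H{k}{\S,\gat}(\Div,\om)$; but by \eqref{defsobolevkgeqone} this space is \emph{not} defined for $k\geq1$ as a closure of $\C{\infty}{\S,\gat}(\om)$, and density of test fields there is not available at this stage of the paper. The detour is unnecessary: your $k=0$ computation verifies the $\bH{}{\S,\gat}(\Div,\om)$-condition for every $k$, and the remaining memberships $\varphi T\in\bH{k}{\gat}(\om)$ and $\Div(\varphi T)\in\bH{k}{\gat}(\om)$ follow from Lemma \ref{lem:weakeqstrongderham} together with the fact that $\varphi$ maps $\H{k}{\gat}(\om)=\ol{\C{\infty}{\gat}(\om)}^{\H{k}{}(\om)}$ into itself. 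Second, in (ii) the vanishing of the residual ``boundary integral'' is better phrased without invoking traces: writing $L=\sum_{j}A_{j}\p_{j}+B$ with smooth matrix coefficients, one has $\scp{A_{j}\p_{j}S}{\Psi_{0}}_{\L{2}{}(\om)}=\scp{\p_{j}S}{A_{j}^{\top}\Psi_{0}}_{\L{2}{}(\om)}=-\scp{S}{\p_{j}(A_{j}^{\top}\Psi_{0})}_{\L{2}{}(\om)}$, which is exactly the defining identity of $\bH{1}{\gat}(\om)=\H{1}{\gat}(\om)$ applied to the test field $A_{j}^{\top}\Psi_{0}\in\C{\infty}{\gan}(\om)$ and is legitimate since $k\geq1$; summing over $j$ gives $\scp{LS}{\Psi_{0}}=-\scp{S}{L\Psi_{0}}$ directly, with no boundary contribution left to estimate. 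With these adjustments the argument is complete.
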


We proceed by showing regular decompositions for the elasticity complexes
extending the results of Corollary \ref{highorderregdecoextdomelacor}
and Corollary \ref{highorderregdecoextdomelacornonstandard}.

\begin{lem}[regular decompositions]
\label{lem:highorderregdecoela}
Let $k\geq0$. Then the bounded regular decompositions
\begin{align*}
\bH{k}{\S,\gat}(\Div,\om)
&=\H{k+1}{\S,\gat}(\om)
+\RotRott\H{k+2}{\S,\gat}(\om),\\
\bH{k}{\S,\gat}(\RotRott,\om)
&=\H{k+2}{\S,\gat}(\om)
+\symGrad\H{k+1}{\gat}(\om)
\intertext{and, for $k\geq1$, the non-standard bounded regular decompositions}
\bH{k}{\S,\gat}(\RotRott,\om)
\subset\bH{k,k-1}{\S,\gat}(\RotRott,\om)
&=\H{k+1}{\S,\gat}(\om)
+\symGrad\H{k+1}{\gat}(\om)
\end{align*}
hold with bounded linear regular decomposition operators
\begin{align*}
\PotQ_{\RotRottS,\gat}^{k,1}:
\bH{k}{\S,\gat}(\RotRott,\om)&\to\H{k+2}{\S,\gat}(\om),
&
\PotQ_{\DivS,\gat}^{k,1}:
\bH{k}{\S,\gat}(\Div,\om)&\to\H{k+1}{\S,\gat}(\om),\\
\PotQ_{\RotRottS,\gat}^{k,0}:
\bH{k}{\S,\gat}(\RotRott,\om)&\to\H{k+1}{\gat}(\om),
&
\PotQ_{\DivS,\gat}^{k,0}:
\bH{k}{\S,\gat}(\Div,\om)&\to\H{k+2}{\S,\gat}(\om),\\
\PotQ_{\RotRottS,\gat}^{k,k-1,1}:
\bH{k,k-1}{\S,\gat}(\RotRott,\om)&\to\H{k+1}{\S,\gat}(\om),\\
\PotQ_{\RotRottS,\gat}^{k,k-1,0}:
\bH{k,k-1}{\S,\gat}(\RotRott,\om)&\to\H{k+1}{\gat}(\om)
\end{align*}
satisfying
\begin{align*}
\PotQ_{\DivS,\gat}^{k,1}
+\RotRott\PotQ_{\DivS,\gat}^{k,0}
&=\id_{\bH{k}{\S,\gat}(\Div,\om)},\\
\PotQ_{\RotRottS,\gat}^{k,1}
+\symGrad\PotQ_{\RotRottS,\gat}^{k,0}
&=\id_{\bH{k}{\S,\gat}(\RotRott,\om)},\\
\PotQ_{\RotRottS,\gat}^{k,k-1,1}
+\symGrad\PotQ_{\RotRottS,\gat}^{k,k-1,0}
&=\id_{\bH{k,k-1}{\S,\gat}(\RotRott,\om)},\qquad
k\geq1.
\end{align*}
It holds $\Div\PotQ_{\DivS,\gat}^{k,1}=\bDivSgatk$
and thus $\bH{k}{\S,\gat,0}(\Div,\om)$ is invariant under $\PotQ_{\DivS,\gat}^{k,1}$.
Analogously, $\RotRott\PotQ_{\RotRottS,\gat}^{k,1}=\bRotRottSgatk$
and $\RotRott\PotQ_{\RotRottS,\gat}^{k,k-1,1}=\bRotRottSgatkkmo$
and thus $\bH{k}{\S,\gat,0}(\RotRott,\om)$ 
is invariant under $\PotQ_{\RotRottS,\gat}^{k,1}$
and $\PotQ_{\RotRottS,\gat}^{k,k-1,1}$, respectively.
\end{lem}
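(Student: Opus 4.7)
The approach is to reduce the global statement to the extendable case already settled by Corollaries~\ref{highorderregdecoextdomelacor} and~\ref{highorderregdecoextdomelacornonstandard} via a finite partition of unity. I would cover $\ol{\om}$ by open balls $U_{1},\ldots,U_{J}$ small enough that each pair $(\om_{j},\gat^{j}):=(U_{j}\cap\om,U_{j}\cap\gat)$ is an extendable bounded strong Lipschitz pair, and fix a subordinate partition of unity $\{\varphi_{j}\}\subset\C{\infty}{}(\rt)$. The cutting Lemma~\ref{lem:cutlem} then serves as the bridge between local and global information, and the finiteness of the cover ensures that the global decomposition operators inherit boundedness from the local ones.

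For the $\Div$-decomposition: given $T\in\bH{k}{\S,\gat}(\Div,\om)$, Lemma~\ref{lem:cutlem}(i) together with Corollary~\ref{cor:weakeqstrongderham} yields $\varphi_{j}T\in\bH{k}{\S,\gat^{j}}(\Div,\om_{j})$ with $\Div(\varphi_{j}T)=\varphi_{j}\Div T+T\grad\varphi_{j}\in\H{k}{\gat^{j}}(\om_{j})$. Applying Corollary~\ref{highorderregdecoextdomelacor} on each extendable pair produces $\varphi_{j}T=T_{1,j}+\RotRott S_{1,j}$ with $T_{1,j}\in\H{k+1}{\S,\gat^{j}}(\om_{j})$ and $S_{1,j}\in\H{k+2}{\S,\gat^{j}}(\om_{j})$ depending continuously on $T$; extending by zero and summing defines $\PotQ_{\DivS,\gat}^{k,1}T:=\sum_{j}T_{1,j}$ and $\PotQ_{\DivS,\gat}^{k,0}T:=\sum_{j}S_{1,j}$ with the required identity. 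The non-standard $\RotRott$-decomposition for $k\geq 1$ is obtained by precisely the same recipe using Lemma~\ref{lem:cutlem}(ii) and Corollary~\ref{highorderregdecoextdomelacornonstandard}: the product-rule error terms $2\sym((\spn\grad\varphi_{j})\Rot S)+\Psi(\Grad\grad\varphi_{j},S)$ live in $\H{k-1}{\gat^{j}}(\om_{j})$, which is precisely the slack built into the non-standard space.

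The standard $\RotRott$-decomposition at $k\geq 1$ then follows by a short bootstrap. For $S\in\bH{k}{\S,\gat}(\RotRott,\om)\subset\bH{k,k-1}{\S,\gat}(\RotRott,\om)$ the non-standard step gives $S=S_{1}+\symGrad v_{1}$ with $S_{1}\in\H{k+1}{\S,\gat}(\om)$. Since $\RotRott S_{1}=\RotRott S\in\H{k}{\gat}(\om)$, Lemma~\ref{lem:weakeqstrongela2} places $S_{1}$ in $\bH{k+1,k}{\S,\gat}(\RotRott,\om)$; applying the non-standard decomposition one level higher yields $S_{1}=S_{2}+\symGrad v_{2}$ with $S_{2}\in\H{k+2}{\S,\gat}(\om)$ and $v_{2}\in\H{k+2}{\gat}(\om)\subset\H{k+1}{\gat}(\om)$, so $S=S_{2}+\symGrad(v_{1}+v_{2})$.

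The main obstacle is the case $k=0$ of the standard $\RotRott$-decomposition, since Lemma~\ref{lem:cutlem}(ii) is restricted to $k\geq 1$: at $k=0$ the term $2\sym((\spn\grad\varphi)\Rot S)$ is only a distribution, so the cutoff argument does not land $\varphi_{j}S$ back in $\bH{}{\S,\gat^{j}}(\RotRott,\om_{j})$. Here I would pass through the $\Div$-side: the $\Div$-decomposition at $k=0$ applied to $\RotRott S\in\bH{}{\S,\gat,0}(\Div,\om)$, together with the invariance of div-free fields under $\PotQ_{\DivS,\gat}^{0,1}$, gives $\RotRott S=T_{1}+\RotRott S_{1}$ with $S_{1}\in\H{2}{\S,\gat}(\om)$ and div-free residual $T_{1}\in\H{1}{\S,\gat,0}(\Div,\om)$. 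One further iteration using the $\Div$-decomposition at $k=1$ combined with the already-established non-standard $\RotRott$-decomposition at $k=1$ realises $T_{1}$ as $\RotRott\widetilde{S}$ with $\widetilde{S}\in\H{2}{\S,\gat}(\om)$, so that $S-(S_{1}+\widetilde{S})\in\bH{}{\S,\gat,0}(\RotRott,\om)=\symGrad\H{1}{\gat}(\om)$, the last identity being obtained by the analogous partition-of-unity argument for $\symGrad$. Finally, the invariance identities $\Div\,\PotQ_{\DivS,\gat}^{k,1}=\bDivSgatk$ and the $\RotRott$-analogues follow by applying the appropriate first-order operator to the decomposition identity and invoking the complex properties $\Div\RotRott=0$ and $\RotRott\symGrad=0$.
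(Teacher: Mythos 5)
Your partition-of-unity framework and the bootstrap for $k\geq1$ (standard decomposition from the non-standard one via Lemma \ref{lem:weakeqstrongela2}) match the paper's proof. But your treatment of the delicate case $k=0$ of the $\RotRott$-decomposition contains a genuine gap. You claim that one further iteration of the $\Div$-decomposition realises the divergence-free residual $T_{1}\in\H{1}{\S,\gat,0}(\Div,\om)$ as $\RotRott\widetilde{S}$, and that $\bH{}{\S,\gat,0}(\RotRott,\om)=\symGrad\H{1}{\gat}(\om)$ follows from a partition-of-unity argument. Both statements are exactness assertions that hold only for extendable pairs and are false for general bounded strong Lipschitz pairs: by Theorem \ref{theo:miniFATzeroorder} one has $N(\RotRottSgat)=R(\symGradgat)\oplus_{\L{2}{\S,\eps}(\om)}\Harm{}{\S,\gat,\gan,\eps}(\om)$ and, dually, divergence-free fields with the $\gat$-condition split off a cohomology space $\Harm{}{\S,\gan,\gat,\eps}(\om)$, neither of which is trivial in general. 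Iterating the $\Div$-decomposition only produces ever more regular divergence-free residuals ($T_{2}\in\H{2}{\S,\gat,0}(\Div,\om)$, etc.) and never removes the harmonic obstruction; and the partition-of-unity argument for the kernel yields only $\bH{}{\S,\gat,0}(\RotRott,\om)\subset\H{1}{\S,\gat}(\om)+\symGrad\H{1}{\gat}(\om)$, not equality with $\symGrad\H{1}{\gat}(\om)$, because the cutoff error $\sym\big(v_{\ell,0}(\grad\varphi_{\ell})^{\top}\big)$ is not a symmetric gradient. (Arguing via orthogonality to $\Harm{}{\S,\gan,\gat,\eps}(\om)$ would invoke Theorem \ref{theo:rangeselawithoutbdpot}, which is proved \emph{from} this lemma, hence circular.)

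The detour is also unnecessary: your stated obstacle — that Lemma \ref{lem:cutlem}(ii) fails at $k=0$ — disappears if you reverse the order of cutting and decomposing, which is what the paper does. Decompose the \emph{restriction} $S|_{\om_{\ell}}\in\bH{k}{\S,\ga_{t,\ell}}(\RotRott\!\!,\om_{\ell})$ first by Corollary \ref{highorderregdecoextdomelacor} (valid for all $k\geq0$ on the extendable pair), and only then multiply the two regular pieces by $\varphi_{\ell}$; the product rule is then classical since $v_{\ell,0}\in\H{k+1}{}$, the error term costs one derivative and is absorbed into the regular part, giving $S\in\H{k+1}{\S,\gat}(\om)+\symGrad\H{k+1}{\gat}(\om)$, and the full $\H{k+2}{\S,\gat}(\om)$ regularity is then recovered by your own bootstrap through the non-standard decomposition at level $k+1$. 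This order also repairs a secondary flaw in your setup: decomposing $\varphi_{j}T$ with respect to the small boundary part $\gat^{j}=U_{j}\cap\gat$ produces pieces that need not vanish near $\p U_{j}\cap\om$, so their extensions by zero do not lie in $\H{k+1}{\S,\gat}(\om)$; one must end up with boundary conditions on the enlarged part $\widehat\ga_{t,\ell}$, which the paper achieves precisely by multiplying the already-decomposed pieces by $\varphi_{\ell}$.
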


\begin{cor}[weak and strong partial boundary conditions coincide]
\label{cor:weakstrongela}
Let $k\geq0$. Weak and strong boundary conditions coincide, i.e.,
\begin{align*}
\bH{k}{\gat}(\symGrad,\om)
&=\H{k}{\gat}(\symGrad,\om)
=\H{k+1}{\gat}(\om)
=\bH{k+1}{\gat}(\om),\\
\bH{k}{\S,\gat}(\Div,\om)
&=\H{k}{\S,\gat}(\Div,\om),\\
\bH{k}{\S,\gat}(\RotRott,\om)
&=\H{k}{\S,\gat}(\RotRott,\om),\\
\bH{k,k-1}{\S,\gat}(\RotRott,\om)
&=\H{k,k-1}{\S,\gat}(\RotRott,\om),\qquad
k\geq1.
\end{align*}
In particular, $\bsymGradgatk=\symGradgatk$, $\bRotRottSgatk=\RotRottSgatk$,
and $\bDivSgatk=\DivSgatk$, as well as, for $k\geq1$, $\bRotRottSgatkkmo=\RotRottSgatkkmo$.
\end{cor}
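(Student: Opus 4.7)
My plan is to exploit the regular decompositions from Lemma \ref{lem:highorderregdecoela} to reduce each ``weak $=$ strong'' assertion to the analogous statement for the standard Sobolev spaces, where it has already been settled by Corollary \ref{cor:weakeqstrongderham} (and, for the $\symGrad$-case, by Lemma \ref{lem:weakeqstrongela}(i) together with Korn's inequality). Since the inclusion ``strong $\subset$ weak'' has already been recorded in Section \ref{sec:sobolev}, I only need to chase the reverse inclusion.

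First I would dispatch the $\RotRott$-case. Picking $S\in\bH{k}{\S,\gat}(\RotRott,\om)$ and applying Lemma \ref{lem:highorderregdecoela}, I obtain
$$S=\PotQ_{\RotRottS,\gat}^{k,1}S+\symGrad\PotQ_{\RotRottS,\gat}^{k,0}S$$
with $\PotQ_{\RotRottS,\gat}^{k,1}S\in\H{k+2}{\S,\gat}(\om)$ and $\PotQ_{\RotRottS,\gat}^{k,0}S\in\H{k+1}{\gat}(\om)$. The first summand is an $\H{k+2}$-limit of test fields in $\C{\infty}{\S,\gat}(\om)$, so $\RotRott$ applied to it is an $\H{k}$-limit of fields in $\C{\infty}{\S,\gat}(\om)$; hence it lies in $\H{k}{\S,\gat}(\RotRott,\om)$. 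The second summand equals $\symGrad v$ with $v\in\H{k+1}{\gat}(\om)$, that is, an $\H{k}$-limit of symmetric gradients of $\C{\infty}{\gat}(\om)$-fields, and therefore lies in $\H{k}{\S,\gat,0}(\RotRott,\om)$. Summing both pieces gives $S\in\H{k}{\S,\gat}(\RotRott,\om)$, as required.

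The remaining cases are handled in exactly the same manner, each from the corresponding decomposition of Lemma \ref{lem:highorderregdecoela}: for $\bH{k}{\S,\gat}(\Div,\om)$ via $T=\PotQ_{\DivS,\gat}^{k,1}T+\RotRott\PotQ_{\DivS,\gat}^{k,0}T$, and for $\bH{k,k-1}{\S,\gat}(\RotRott,\om)$ via $\PotQ_{\RotRottS,\gat}^{k,k-1,1}+\symGrad\PotQ_{\RotRottS,\gat}^{k,k-1,0}$, while the $\symGrad$-assertion is a direct restatement of Lemma \ref{lem:weakeqstrongela}(i). In every instance the first summand lands in a standard Sobolev space with strong partial boundary conditions (where weak $=$ strong by Corollary \ref{cor:weakeqstrongderham}), and the second summand lies in the appropriate strong kernel space by the same approximation argument. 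Once domains agree, the operator identities $\bsymGradgatk=\symGradgatk$, $\bRotRottSgatk=\RotRottSgatk$, $\bDivSgatk=\DivSgatk$, and (for $k\geq1$) $\bRotRottSgatkkmo=\RotRottSgatkkmo$ are automatic from the pointwise agreement of the differential expressions.

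I do not anticipate a real obstacle, since all the substantive work has been absorbed upstream: Lemma \ref{lem:highorderregdecoela} already provides decompositions defined on the weak spaces, constructed via the cutting Lemma \ref{lem:cutlem} (which is formulated for weak spaces from the outset) combined with the extendable-domain potentials of Theorem \ref{highorderregpotextdomela}. The only point that warrants a quick sanity check is that each decomposition operator genuinely accepts weak inputs; this is built into its construction, since multiplication by a cut-off and subsequent zero-extension through $\gat$ are valid verbatim for weakly defined fields.
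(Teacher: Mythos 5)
Your treatment of the $\Div$-, $\RotRott$-, and $\RotRott^{k,k-1}$-cases is correct and is in substance the paper's own argument: the paper proves Lemma \ref{lem:highorderregdecoela} and this corollary in one combined proof, and at the end of each localization step records exactly your observation that the decomposition lands in $\H{k+1}{\S,\gat}(\om)+\RotRott\H{k+2}{\S,\gat}(\om)\subset\H{k}{\S,\gat}(\Div,\om)$ (and analogously for $\RotRott$), using that a strong higher-order Sobolev space embeds into the strong graph space and that the strong complex property puts the second summand into the strong kernel space. Granting Lemma \ref{lem:highorderregdecoela} as stated (its decomposition operators are indeed defined on the weak spaces), those three identities follow just as you say.

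There is, however, a genuine gap in the $\symGrad$-line. You dismiss it as ``a direct restatement of Lemma \ref{lem:weakeqstrongela}(i)'', but part (i) of that lemma only asserts $\H{k}{\gat}(\symGrad,\om)=\H{k+1}{\gat}(\om)=\bH{k+1}{\gat}(\om)$; it says nothing about $\bH{k}{\gat}(\symGrad,\om)$. Part (ii) does cover $\bH{k}{\gat}(\symGrad,\om)=\H{k}{\gat}(\symGrad,\om)$, but only for $k\geq1$. The remaining case $k=0$, i.e.\ $\bH{}{\gat}(\symGrad,\om)\subset\H{}{\gat}(\symGrad,\om)=\H{1}{\gat}(\om)$, is precisely one of the four open inclusions flagged in Remark \ref{rem:weakeqstrongela}, and it does not follow from Korn's inequality alone: Korn gives $v\in\H{1}{}(\om)$, but upgrading the weak integration-by-parts condition against $\C{\infty}{\S,\gan}(\om)$ to membership in the closure $\H{1}{\gat}(\om)$ requires an argument. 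Lemma \ref{lem:highorderregdecoela} offers no regular decomposition of $\bH{k}{\gat}(\symGrad,\om)$ to fall back on, and invoking a global regular potential for $\symGrad v$ would be circular, since the bounded potentials on general domains (Theorem \ref{theo:regpothigherorder}) are derived downstream of this corollary. The paper closes this case with a separate localization: for a partition of unity subordinate to extendable pairs $(\om_{\ell},\widehat\ga_{t,\ell})$ one has $\varphi_{\ell}v\in\bH{k}{\widehat\ga_{t,\ell}}(\symGrad,\om_{\ell})=\H{k+1}{\widehat\ga_{t,\ell}}(\om_{\ell})$ by Corollary \ref{weakstrongextdomelacor}, and summing the zero-extensions gives $v\in\H{k+1}{\gat}(\om)$. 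You need to add this (or an equivalent) step.
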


\begin{proof}[Proof of Lemma \ref{lem:highorderregdecoela} and Corollary \ref{cor:weakstrongela}.]
According to \cite{PS2021a} and \cite{PZ2020b}, cf.~\cite{BPS2016a,BPS2018a,BPS2019a},
let $(U_{\ell},\varphi_{\ell})$ be a partition of unity for $\om$, i.e.,
$$\om=\bigcup_{\ell=-L}^{L}\om_{\ell},\qquad
\om_{\ell}:=\om\cap U_{\ell},\qquad
\varphi_{\ell}\in\C{\infty}{\p U_{\ell}}(U_{\ell}),$$
and $(\om_{\ell},\widehat\ga_{t,\ell})$ are extendable bounded strong Lipschitz pairs.
Recall $\ga_{t,\ell}:=\gat\cap U_{\ell}$ and $\widehat\ga_{t,\ell}$.
\begin{itemize}
\item
Let $k\geq0$ and let $T\in\bH{k}{\S,\gat}(\Div,\om)$. Then by definition
$T|_{\om_{\ell}}\in\bH{k}{\S,\ga_{t,\ell}}(\Div,\om_{\ell})$ 
and we decompose by Corollary \ref{highorderregdecoextdomelacor}
$$T|_{\om_{\ell}}=T_{\ell,1}
+\RotRott S_{\ell,0}$$
with 
$T_{\ell,1}:=\PotQ_{\DivS,\ga_{t,\ell}}^{k,1}T|_{\om_{\ell}}\in\H{k+1}{\S,\ga_{t,\ell}}(\om_{\ell})$
and 
$S_{\ell,0}:=\PotQ_{\DivS,\ga_{t,\ell}}^{k,0}T|_{\om_{\ell}}\in\H{k+2}{\S,\ga_{t,\ell}}(\om_{\ell})$.
Lemma \ref{lem:cutlem} yields
\begin{align*}
\varphi_{\ell}T|_{\om_{\ell}}
&=\varphi_{\ell}T_{\ell,1}
+\varphi_{\ell}\RotRott S_{\ell,0}\\
&=\overbrace{\varphi_{\ell}T_{\ell,1}
-2\sym\big((\spn\grad\varphi_{\ell})\Rot S_{\ell,0}\big)
-\Psi(\Grad\grad\varphi_{\ell},S_{\ell,0})}^{=:T_{\ell}}\\
&\qquad+\RotRott(\underbrace{\varphi_{\ell}S_{\ell,0})}_{=:S_{\ell}}
\end{align*}
with $T_{\ell}\in\H{k+1}{\S,\widehat\ga_{t,\ell}}(\om_{\ell})$
and $S_{\ell}\in\H{k+2}{\S,\widehat\ga_{t,\ell}}(\om_{\ell})$.
Extending $T_{\ell}$ and $S_{\ell}$ by zero to $\om$
gives tensor fields $\widetilde{T}_{\ell}\in\H{k+1}{\S,\gat}(\om)$ and 
$\widetilde{S}_{\ell}\in\H{k+2}{\S,\gat}(\om)$ as well as 
\begin{align*}
T=\sum_{\ell=-L}^{L}\varphi_{\ell}T|_{\om_{\ell}}
&=\sum_{\ell=-L}^{L}\widetilde{T}_{\ell}+\RotRott\sum_{\ell=-L}^{L}\widetilde{S}_{\ell}\\
&\in\H{k+1}{\S,\gat}(\om)+\RotRott\H{k+2}{\S,\gat}(\om)
\subset\H{k}{\S,\gat}(\Div,\om).
\end{align*}
As all operations have been linear and continuous we set
$$\PotQ_{\DivS,\gat}^{k,1}T:=\sum_{\ell=-L}^{L}\widetilde{T}_{\ell}\in\H{k+1}{\S,\gat}(\om),\qquad
\PotQ_{\DivS,\gat}^{k,0}T:=\sum_{\ell=-L}^{L}\widetilde{S}_{\ell}\in\H{k+2}{\S,\gat}(\om).$$
\item
Let $k\geq1$ and let  $S\in\bH{k,k-1}{\S,\gat}(\RotRott,\om)$. Then by definition
$S|_{\om_{\ell}}\in\bH{k,k-1}{\S,\ga_{t,\ell}}(\RotRott,\om_{\ell})$ 
and we decompose by Corollary \ref{highorderregdecoextdomelacornonstandard}
$$S|_{\om_{\ell}}=S_{\ell,1}
+\symGrad v_{\ell,0}$$
with 
$S_{\ell,1}:=\PotQ_{\RotRottS,\ga_{t,\ell}}^{k,k-1,1}S|_{\om_{\ell}}\in\H{k+1}{\S,\ga_{t,\ell}}(\om_{\ell})$
and 
$v_{\ell,0}:=\PotQ_{\RotRottS,\ga_{t,\ell}}^{k,k-1,0}S|_{\om_{\ell}}\in\H{k+1}{\ga_{t,\ell}}(\om_{\ell})$.
Thus
\begin{align}
\begin{aligned}
\label{decovarphiS}
\varphi_{\ell}S|_{\om_{\ell}}
&=\varphi_{\ell}S_{\ell,1}
+\varphi_{\ell}\symGrad v_{\ell,0}\\
&=\underbrace{\varphi_{\ell}S_{\ell,1}
-\sym\big(v_{\ell,0}(\grad\varphi_{\ell})^{\top}\big)}_{=:S_{\ell}}
+\symGrad(\underbrace{\varphi_{\ell}v_{\ell,0})}_{=:v_{\ell}}
\end{aligned}
\end{align}
with $S_{\ell}\in\H{k+1}{\S,\widehat\ga_{t,\ell}}(\om_{\ell})$
and $v_{\ell}\in\H{k+1}{\widehat\ga_{t,\ell}}(\om_{\ell})$.
Extending $S_{\ell}$ and $v_{\ell}$ by zero to $\om$
gives fields $\widetilde{S}_{\ell}\in\H{k+1}{\S,\gat}(\om)$ and 
$\widetilde{v}_{\ell}\in\H{k+1}{\gat}(\om)$ as well as 
\begin{align*}
S=\sum_{\ell=-L}^{L}\varphi_{\ell}S|_{\om_{\ell}}
&=\sum_{\ell=-L}^{L}\widetilde{S}_{\ell}+\symGrad\sum_{\ell=-L}^{L}\widetilde{v}_{\ell}\\
&\in\H{k+1}{\S,\gat}(\om)+\symGrad\H{k+1}{\gat}(\om)
\subset\H{k,k-1}{\S,\gat}(\RotRott,\om).
\end{align*}
As all operations have been linear and continuous we set
$$\PotQ_{\RotRottS,\gat}^{k,k-1,1}S:=\sum_{\ell=-L}^{L}\widetilde{S}_{\ell}\in\H{k+1}{\S,\gat}(\om),\qquad
\PotQ_{\RotRottS,\gat}^{k,k-1,0}S:=\sum_{\ell=-L}^{L}\widetilde{v}_{\ell}\in\H{k+1}{\gat}(\om).$$
\item
Let $k\geq0$ and let $S\in\bH{k}{\S,\gat}(\RotRott,\om)$. Then by definition
$S|_{\om_{\ell}}\in\bH{k}{\S,\ga_{t,\ell}}(\RotRott,\om_{\ell})$ 
and we decompose by Corollary \ref{highorderregdecoextdomelacor}
$$S|_{\om_{\ell}}=S_{\ell,1}
+\symGrad v_{\ell,0}$$
with 
$S_{\ell,1}:=\PotQ_{\RotRottS,\ga_{t,\ell}}^{k,1}S|_{\om_{\ell}}\in\H{k+2}{\S,\ga_{t,\ell}}(\om_{\ell})$
and 
$v_{\ell,0}:=\PotQ_{\RotRottS,\ga_{t,\ell}}^{k,0}S|_{\om_{\ell}}\in\H{k+1}{\ga_{t,\ell}}(\om_{\ell})$.
Now we follow the arguments from \eqref{decovarphiS} on.
Note that still only $S_{\ell}\in\H{k+1}{\S,\widehat\ga_{t,\ell}}(\om_{\ell})$ holds,
i.e., we have lost one order of regularity for $S_{\ell}$.
Nevertheless, we get 
$$S\in\H{k+1}{\S,\gat}(\om)+\symGrad\H{k+1}{\gat}(\om),$$
and all operations have been linear and continuous.
But this implies by the previous step
$$S\in\H{k+1,k}{\S,\gat}(\RotRott,\om)+\symGrad\H{k+1}{\gat}(\om).$$
Again by the previous step we obtain 
\begin{align*}
S&\in\H{k+2}{\S,\gat}(\om)+\symGrad\H{k+2}{\gat}(\om)+\symGrad\H{k+1}{\gat}(\om)\\
&=\H{k+2}{\S,\gat}(\om)+\symGrad\H{k+1}{\gat}(\om)
\subset\H{k}{\S,\gat}(\RotRott,\om),
\end{align*}
and all operations have been linear and continuous.
\end{itemize}

It remains to prove $\bH{k}{\gat}(\symGrad,\om)\subset\H{k}{\gat}(\symGrad,\om)$.
Let $v\in\bH{k}{\gat}(\symGrad,\om)$. 
Then we have 
$\varphi_{\ell}v\in\bH{k}{\widehat\ga_{t,\ell}}(\symGrad,\om_{\ell})
=\H{k}{\widehat\ga_{t,\ell}}(\symGrad,\om_{\ell})=\H{k+1}{\widehat\ga_{t,\ell}}(\om_{\ell})$
by Corollary \ref{weakstrongextdomelacor}.
Extending $\varphi_{\ell}v$ by zero to $\om$
yields vector fields $v_{\ell}\in\H{k+1}{\gat}(\om)$
as well as $v=\sum_{\ell}\varphi_{\ell}v=\sum_{\ell}v_{\ell}\in\H{k+1}{\gat}(\om)$.
\end{proof}

\subsection{Mini FA-ToolBox}
\label{sec:fatb}%

\subsubsection{Zero Order Mini FA-ToolBox}
\label{sec:zofatb}%

Recall Section \ref{sec:dirneu} and let $\eps$, $\mu$ be admissible.
In Section \ref{sec:elaop} (for $\eps=\mu=\id$) we have seen that
the densely defined and closed linear operators
\begin{align*}
\A_{0}=\symGradgat:\H{1}{\gat}(\om)\subset\L{2}{}(\om)&\to\L{2}{\S,\eps}(\om),\\
\A_{1}=\mu^{-1}\RotRottSgat:\H{}{\S,\gat}(\RotRott\!\!,\om)\subset\L{2}{\S,\eps}(\om)&\to\L{2}{\S,\mu}(\om),\\
\A_{2}=\DivSgat\mu:\mu^{-1}\H{}{\S,\gat}(\Div,\om)\subset\L{2}{\S,\mu}(\om)&\to\L{2}{}(\om),\\
\A_{0}^{*}=-\DivSgan\eps:\eps^{-1}\H{}{\S,\gan}(\Div,\om)\subset\L{2}{\S,\eps}(\om)&\to\L{2}{}(\om),\\
\A_{1}^{*}=\eps^{-1}\RotRottSgan:\H{}{\S,\gan}(\RotRott\!\!,\om)\subset\L{2}{\S,\mu}(\om)&\to\L{2}{\S,\eps}(\om),\\
\A_{2}^{*}=-\symGradgan:\H{1}{\gan}(\om)\subset\L{2}{}(\om)&\to\L{2}{\S,\mu}(\om),
\end{align*}
where we have used Corollary \ref{cor:weakstrongela},
build the long primal and dual elasticity Hilbert complex
\begin{equation}
\label{elacomplex5}
\scriptsize
\def\arrowlength{19ex}
\def\arrowdistance{.8}
\begin{tikzcd}[column sep=\arrowlength]
\RM_{\gat}
\arrow[r, rightarrow, shift left=\arrowdistance, "\A_{-1}=\iota_{\RM_{\gat}}"] 
\arrow[r, leftarrow, shift right=\arrowdistance, "\A_{-1}^{*}=\pi_{\RM_{\gat}}"']
& 
[-2em]
\L{2}{}(\om) 
\ar[r, rightarrow, shift left=\arrowdistance, "\A_{0}=\symGradgat"] 
\ar[r, leftarrow, shift right=\arrowdistance, "\A_{0}^{*}=-\DivSgan\eps"']
&
[-1em]
\L{2}{\S,\eps}(\om) 
\ar[r, rightarrow, shift left=\arrowdistance, "\A_{1}=\mu^{-1}\RotRottSgat"] 
\ar[r, leftarrow, shift right=\arrowdistance, "\A_{1}^{*}=\eps^{-1}\RotRottSgan"']
& 
[0em]
\L{2}{\S,\mu}(\om) 
\arrow[r, rightarrow, shift left=\arrowdistance, "\A_{2}=\DivSgat\mu"] 
\arrow[r, leftarrow, shift right=\arrowdistance, "\A_{2}^{*}=-\symGradgan"']
& 
[-1em]
\L{2}{}(\om) 
\arrow[r, rightarrow, shift left=\arrowdistance, "\A_{3}=\pi_{\RM_{\gan}}"] 
\arrow[r, leftarrow, shift right=\arrowdistance, "\A_{3}^{*}=\iota_{\RM_{\gan}}"']
&
[-2em]
\RM_{\gan}
\end{tikzcd}
\end{equation}
cf. \eqref{elacomplex2}.

\begin{theo}[compact embedding]
\label{theo:cptembzeroorder}
The embedding
$$\H{}{\S,\gat}(\RotRott\!\!,\om)\cap\eps^{-1}\H{}{\S,\gan}(\Div,\om)
\incl\L{2}{\S,\eps}(\om)$$
is compact. Moreover, the compactness does not depend on $\eps$.
\end{theo}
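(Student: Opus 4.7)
The plan is to verify the hypotheses of the abstract compactness machinery of \cite[Lemma 2.22]{PS2021a} for the elasticity complex \eqref{elacomplex5} and then invoke it; equivalently, to carry out the classical ``regular decomposition $+$ Rellich $+$ weak integration by parts'' scheme. The three required ingredients are now all in place: the bounded regular decomposition $\bH{}{\S,\gat}(\RotRott\!\!,\om)=\H{2}{\S,\gat}(\om)+\symGrad\H{1}{\gat}(\om)$ from Lemma \ref{lem:highorderregdecoela} at $k=0$; the compactness of the Rellich embeddings $\H{2}{\S,\gat}(\om)\incl\L{2}{\S}(\om)$ and $\H{1}{\gat}(\om)\incl\L{2}{}(\om)$; and the ``strong $=$ weak'' identification from Corollary \ref{cor:weakstrongela}.

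Carried out directly, let $(S_n)$ be a bounded sequence in $\H{}{\S,\gat}(\RotRott\!\!,\om)\cap\eps^{-1}\H{}{\S,\gan}(\Div,\om)$ equipped with its natural graph norm. Applying the regular decomposition operators furnishes
$S_n=T_n+\symGrad w_n$
with $T_n:=\PotQ_{\RotRottS,\gat}^{0,1}S_n\in\H{2}{\S,\gat}(\om)$ and $w_n:=\PotQ_{\RotRottS,\gat}^{0,0}S_n\in\H{1}{\gat}(\om)$, both sequences bounded in their respective Sobolev norms. Rellich's selection theorem then yields a subsequence (not relabelled) with $T_n\to T$ in $\L{2}{\S}(\om)$ and $w_n\to w$ in $\L{2}{}(\om)$.

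It remains to show that $(\symGrad w_n)$ is Cauchy in $\L{2}{\S,\eps}(\om)$, which combined with the convergence of $(T_n)$ settles the proof. Put $u_{nm}:=w_n-w_m\in\H{1}{\gat}(\om)$ and note $\symGrad u_{nm}=(S_n-S_m)-(T_n-T_m)$. Then
\begin{align*}
\|\symGrad u_{nm}\|_{\L{2}{\S,\eps}(\om)}^{2}
&=\scp{\eps(S_n-S_m)}{\symGrad u_{nm}}_{\L{2}{}(\om)}
-\scp{\eps(T_n-T_m)}{\symGrad u_{nm}}_{\L{2}{}(\om)}.
\end{align*}
Since $\eps(S_n-S_m)\in\bH{}{\S,\gan}(\Div,\om)$ and $u_{nm}\in\H{1}{\gat}(\om)=\ol{\C{\infty}{\gat}(\om)}^{\H{1}{}(\om)}$, the weak-derivative characterisation of $\bH{}{\S,\gan}(\Div,\om)$ together with Corollary \ref{cor:weakstrongela} and a density argument allow integration by parts, so the first term equals $-\scp{\Div(\eps(S_n-S_m))}{u_{nm}}_{\L{2}{}(\om)}$, which is bounded by $C\|u_{nm}\|_{\L{2}{}(\om)}\to 0$. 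The second term is bounded by $C\|T_n-T_m\|_{\L{2}{\S}(\om)}\|\symGrad u_{nm}\|_{\L{2}{\S}(\om)}$ and, using the boundedness of $(\symGrad w_n)$ in $\L{2}{\S}(\om)$, also tends to zero. Consequently $(S_n)$ is Cauchy in $\L{2}{\S,\eps}(\om)$.

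The stated independence from $\eps$ comes for free: the regular decomposition, Rellich, and density steps involve no weight at all, and $\eps$ enters only through the equivalent inner product on $\L{2}{\S,\eps}(\om)$ and on the $\Div$-side of the argument above, which proceeds uniformly in any admissible $\eps$. The only delicate point -- and the structural reason for setting up the decomposition of $\RotRott$-fields with $\gat$-boundary data rather than $\gan$ -- is the boundary-condition matching in the integration by parts: the $\gat$-boundary data of $w_n$ is precisely what dualises the $\gan$-boundary data of $\eps S_n$, so the weak Green formula applies without boundary terms and produces the needed $\|u_{nm}\|_{\L{2}{}(\om)}$-bound.
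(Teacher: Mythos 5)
Your proof is correct and follows essentially the same route as the paper: a bounded regular decomposition from Lemma \ref{lem:highorderregdecoela}, Rellich's selection theorem, and the weak integration by parts that underlies \cite[Lemma 2.22]{PS2021a} -- you simply unwind that abstract lemma explicitly instead of citing it. The only (immaterial) difference is that you decompose $D(\A_{1})=\H{}{\S,\gat}(\RotRott\!\!,\om)$ via $\H{2}{\S,\gat}(\om)+\symGrad\H{1}{\gat}(\om)$, whereas the paper decomposes the dual domain $D(\A_{0}^{*})=\H{}{\S,\gan}(\Div,\om)$ via $\H{1}{\S,\gan}(\om)+\RotRott\H{2}{\S,\gan}(\om)$; both come from the same lemma and the argument is symmetric under exchanging the primal and dual complexes.
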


\begin{proof}
Note that this type of compact embedding is independent of $\eps$ and $\mu$,
cf.~\cite[Lemma 5.1]{PW2020a}. So, let $\eps=\mu=\id$.
Lemma \ref{lem:highorderregdecoela} (for $k=0$)
yields the bounded regular decomposition
$$D(\A_{0}^{*})
=\H{}{\S,\gan}(\Div,\om)
=\H{1}{\S,\gan}(\om)
+\RotRott\H{2}{\S,\gan}(\om)
=\H{+}{1}+\A_{1}^{*}\H{+}{2}$$
with $\H{+}{1}=\H{1}{\S,\gan}(\om)$ and $\H{+}{2}=\H{2}{\S,\gan}(\om)$ and 
$\H{}{1}=\H{}{2}=\L{2}{\S}(\om)$.
Rellich's selection theorem and 
\cite[Corollary 2.12]{PZ2020b}, cf.~\cite[Lemma 2.22]{PS2021a},
yield that $D(\A_{1})\cap D(\A_{0}^{*})\incl\H{}{1}$ is compact.
\end{proof}

\begin{rem}[compact embedding]
\label{rem:cptembzeroorder}
The embeddings 
\begin{align*}
D(\A_{0})\cap D(\A_{-1}^{*})
=\H{1}{\gat}(\om)
&\incl\L{2}{}(\om)
=\H{}{0},\\
D(\A_{1})\cap D(\A_{0}^{*})
=\H{}{\S,\gat}(\RotRott\!\!,\om)\cap\eps^{-1}\H{}{\S,\gan}(\Div,\om)
&\incl\L{2}{\S,\eps}(\om)
=\H{}{1},\\
D(\A_{2})\cap D(\A_{1}^{*})
=\mu^{-1}\H{}{\S,\gat}(\Div,\om)\cap\H{}{\S,\gan}(\RotRott\!\!,\om)
&\incl\L{2}{\S,\mu}(\om)
=\H{}{2},\\
D(\A_{3})\cap D(\A_{2}^{*})
=\H{1}{\gan}(\om)
&\incl\L{2}{}(\om)
=\H{}{3}
\end{align*}
are compact, and the compactness does not depend on $\eps$ or $\mu$.
\end{rem}

\begin{theo}[compact elasticity complex]
\label{theo:cptembzeroorder2}
The long primal and dual elasticity Hilbert complex \eqref{elacomplex5} is compact. 
In particular, the complex is closed.
\end{theo}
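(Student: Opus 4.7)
The plan is to combine the compact embeddings already assembled in Remark \ref{rem:cptembzeroorder} and Theorem \ref{theo:cptembzeroorder} with the abstract machinery of the FA-ToolBox, which converts pointwise compact embeddings of intersection domains into compactness (and in particular closedness) of the full Hilbert complex.

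First, recall that compactness of the long primal and dual Hilbert complex \eqref{elacomplex5} means that at every position $n\in\{-1,0,1,2,3\}$ of the complex the embedding
\[
D(\A_{n})\cap D(\A_{n-1}^{*}) \incl \H{}{n}
\]
is compact. For the two outer positions $n=-1$ and $n=3$ the relevant intersections reduce to $\RM_{\gat}$ and $\RM_{\gan}$, which are finite-dimensional ($\dim\RM=6$), hence the corresponding embeddings are trivially compact. For the four inner positions $n\in\{0,1,2,3\}$ the required compact embeddings are exactly those listed in Remark \ref{rem:cptembzeroorder}: the two outer ones are Rellich's selection theorem applied to $\H{1}{\gat}(\om)\incl\L{2}{}(\om)$ and $\H{1}{\gan}(\om)\incl\L{2}{}(\om)$, and the two inner ones are Theorem \ref{theo:cptembzeroorder} (together with its obvious symmetric counterpart obtained by swapping $\gat$ and $\gan$ and the roles of $\eps$ and $\mu$). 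Since all four compactness assertions are independent of the admissible weights $\eps$ and $\mu$, the same holds for the compactness of the full complex.

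With all intersection embeddings established to be compact, the assertion that \eqref{elacomplex5} is a compact Hilbert complex follows. For the additional statement that the complex is closed, I would invoke the abstract FA-ToolBox result \cite[Lemma 2.22]{PS2021a} (together with the version used in the proof of Theorem \ref{theo:cptembzeroorder}, cf.~\cite[Corollary 2.12]{PZ2020b}), which asserts that for a densely defined, closed Hilbert complex the compactness of the embeddings $D(\A_{n})\cap D(\A_{n-1}^{*})\incl\H{}{n}$ implies that every range $R(\A_{n})$ is closed, the cohomology groups $N(\A_{n})/R(\A_{n-1})$ are finite-dimensional, and corresponding Friedrichs/Poincaré type estimates and Helmholtz decompositions hold.

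There is no real obstacle here: the heavy lifting has already been done in Lemma \ref{lem:highorderregdecoela}, which provides the regular decompositions feeding into the compact embedding Theorem \ref{theo:cptembzeroorder}. The present theorem is essentially a repackaging step, applying the abstract FA-ToolBox lemma once per position in the long complex. The only detail to be careful about is confirming that the endpoints $\RM_{\gat}$, $\RM_{\gan}$ are handled correctly (they contribute trivially because of finite dimension) and that the adjoints $\A_{n}^{*}$ stated in Section \ref{sec:elaop} match the ones entering \eqref{elacomplex5}, both of which are immediate consequences of Corollary \ref{cor:weakstrongela}.
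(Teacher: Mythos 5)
Your proposal is correct and matches the paper's (implicit) argument: the theorem is stated directly after Remark \ref{rem:cptembzeroorder}, which collects exactly the four compact intersection embeddings you list (two via Rellich, two via Theorem \ref{theo:cptembzeroorder} and its $\gat\leftrightarrow\gan$ counterpart), and closedness then follows from the abstract FA-ToolBox. The paper treats this as an immediate consequence and gives no separate proof, so your repackaging is precisely the intended route.
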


Let us recall the reduced operators
{\small
\begin{align*}
(\A_{0})_{\bot}=(\symGradgat)_{\bot}:D\big((\symGradgat)_{\bot}\big)\subset
(\RM_{\gat})^{\bot_{\L{2}{}(\om)}}&\to R(\symGradgat),\\
(\A_{1})_{\bot}=(\mu^{-1}\RotRottSgat)_{\bot}:D\big((\mu^{-1}\RotRottSgat)_{\bot}\big)\subset 
N(\mu^{-1}\RotRottSgat)^{\bot_{\L{2}{\S,\eps}(\om)}}&\to R(\mu^{-1}\RotRottSgat),\\
(\A_{2})_{\bot}=(\DivSgat\mu)_{\bot}:D\big((\DivSgat\mu)_{\bot}\big)\subset
N(\DivSgat\mu)^{\bot_{\L{2}{\S,\mu}(\om)}}&\to R(\DivSgat\mu),\\
(\A_{0}^{*})_{\bot}=-(\DivSgan\eps)_{\bot}:D\big((\DivSgan\eps)_{\bot}\big)\subset
N(\DivSgan\eps)^{\bot_{\L{2}{\S,\eps}(\om)}}&\to R(\DivSgan\eps),\\
(\A_{1}^{*})_{\bot}=(\eps^{-1}\RotRottSgan)_{\bot}:D\big((\eps^{-1}\RotRottSgan)_{\bot}\big)\subset 
N(\eps^{-1}\RotRottSgan)^{\bot_{\L{2}{\S,\mu}(\om)}}&\to R(\eps^{-1}\RotRottSgan),\\
(\A_{2}^{*})_{\bot}=(\symGradgan)_{\bot}:D\big((\symGradgan)_{\bot}\big)\subset
(\RM_{\gan})^{\bot_{\L{2}{}(\om)}}&\to R(\symGradgan),\\
\end{align*}
}
with domains of definition 
{\footnotesize
\begin{align*}
D\big((\A_{0})_{\bot}\big)
&=D(\symGradgat)\cap(\RM_{\gat})^{\bot_{\L{2}{}(\om)}},\\
D\big((\A_{1})_{\bot}\big)
&=D(\mu^{-1}\RotRottSgat)\cap N(\mu^{-1}\RotRottSgat)^{\bot_{\L{2}{\S,\eps}(\om)}}
=D(\mu^{-1}\RotRottSgat)\cap R(\eps^{-1}\RotRottSgan),\\
D\big((\A_{2})_{\bot}\big)
&=D(\DivSgat\mu)\cap N(\DivSgat\mu)^{\bot_{\L{2}{\S,\mu}(\om)}}
=D(\DivSgat\mu)\cap R(\symGradgan),\\
D\big((\A_{0}^{*})_{\bot}\big)
&=D(\DivSgan\eps)\cap N(\DivSgan\eps)^{\bot_{\L{2}{\S,\eps}(\om)}}
=D(\DivSgan\eps)\cap R(\symGradgat),\\
D\big((\A_{1}^{*})_{\bot}\big)
&=D(\eps^{-1}\RotRottSgan)\cap N(\eps^{-1}\RotRottSgan)^{\bot_{\L{2}{\S,\mu}(\om)}}
=D(\eps^{-1}\RotRottSgan)\cap R(\mu^{-1}\RotRottSgat),\\
D\big((\A_{2}^{*})_{\bot}\big)
&=D(\symGradgan)\cap(\RM_{\gan})^{\bot_{\L{2}{}(\om)}}.
\end{align*}
}

Note that $R(\A_{n})=R\big((\A_{n})_{\bot}\big)$
and $R(\A_{n}^{*})=R\big((\A_{n}^{*})_{\bot}\big)$ hold.
\cite[Lemma 2.9]{PS2021a} shows:

\begin{theo}[mini FA-ToolBox]
\label{theo:miniFATzeroorder}
For the zero order elasticity complex it holds:
\begin{itemize}
\item[\bf(i)]
The ranges $R(\symGradgat)$, $R(\mu^{-1}\RotRottSgat)$, and $R(\DivSgat\mu)$ are closed.
\item[\bf(i)]
The inverse operators $(\symGradgat)_{\bot}^{-1}$, $(\mu^{-1}\RotRottSgat)_{\bot}^{-1}$, 
and $(\DivSgat\mu)_{\bot}^{-1}$ are compact.
\item[\bf(iii)]
The cohomology group of generalised Dirichlet/Neumann tensor fields
$\Harm{}{\S,\gat,\gan,\eps}(\om)$ is finite-dimensional.
Moreover, the dimension does not depend on $\eps$.
\item[\bf(iv)]
The orthonormal Helmholtz type decompositions
\begin{align*}
\L{2}{\S,\eps}(\om)
&=R(\symGradgat)
\oplus_{\L{2}{\S,\eps}(\om)}N(\DivSgan\eps)\\
&=N(\mu^{-1}\RotRottSgat)
\oplus_{\L{2}{\S,\eps}(\om)}R(\eps^{-1}\RotRottSgan)\\
&=R(\symGradgat)
\oplus_{\L{2}{\S,\eps}(\om)}\Harm{}{\S,\gat,\gan,\eps}(\om)
\oplus_{\L{2}{\S,\eps}(\om)}R(\eps^{-1}\RotRottSgan)
\end{align*}
hold.
\item[\bf(v)]
There exist (optimal) $c_{0},c_{1},c_{2}>0$ such that
the Friedrichs/Poincar\'e type estimates 
\begin{align*}
\forall\,v&\in\H{1}{\gat}(\om)\cap(\RM_{\gat})^{\bot_{\L{2}{}(\om)}}
&
\norm{v}_{\L{2}{}(\om)}&\leq c_{0}\norm{\symGrad v}_{\L{2}{\S,\eps}(\om)},\\
\forall\,T&\in\eps^{-1}\H{}{\S,\gan}(\Div,\om)\cap R(\symGradgat)
&
\norm{T}_{\L{2}{\S,\eps}(\om)}&\leq c_{0}\norm{\Div\eps T}_{\L{2}{}(\om)},\\
\forall\,S&\in\H{}{\S,\gat}(\RotRott\!\!,\om)\cap R(\eps^{-1}\RotRottSgan)
&
\norm{S}_{\L{2}{\S,\eps}(\om)}&\leq c_{1}\norm{\mu^{-1}\RotRott S}_{\L{2}{\S,\mu}(\om)},\\
\forall\,S&\in\H{}{\S,\gan}(\RotRott\!\!,\om)\cap R(\mu^{-1}\RotRottSgat)
&
\norm{S}_{\L{2}{\S,\mu}(\om)}&\leq c_{1}\norm{\eps^{-1}\RotRott S}_{\L{2}{\S,\eps}(\om)},\\
\forall\,T&\in\mu^{-1}\H{}{\S,\gat}(\Div,\om)\cap R(\symGradgan)
&
\norm{T}_{\L{2}{\S,\mu}(\om)}&\leq c_{2}\norm{\Div\mu T}_{\L{2}{}(\om)},\\
\forall\,v&\in\H{1}{\gan}(\om)\cap(\RM_{\gan})^{\bot_{\L{2}{}(\om)}}
&
\norm{v}_{\L{2}{}(\om)}&\leq c_{2}\norm{\symGrad v}_{\L{2}{\S,\mu}(\om)}
\end{align*}
hold.
\item[\bf(vi)]
For all 
$S\in\H{}{\S,\gat}(\RotRott\!\!,\om)
\cap\eps^{-1}\H{}{\S,\gan}(\Div,\om)
\cap\Harm{}{\S,\gat,\gan,\eps}(\om)^{\bot_{\L{2}{\S,\eps}(\om)}}$
it holds
$$\norm{S}_{\L{2}{\S,\eps}(\om)}^2\leq 
c_{1}^2\norm{\mu^{-1}\RotRott S}_{\L{2}{\S,\mu}(\om)}^2
+c_{0}^2\norm{\Div\eps S}_{\L{2}{}(\om)}^2.$$
\item[\bf(vii)]
$\Harm{}{\S,\gat,\gan,\eps}(\om)=\{0\}$, if $(\om,\gat)$ is extendable.
\end{itemize}
\end{theo}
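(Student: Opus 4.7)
The plan is to invoke the abstract FA-ToolBox \cite[Lemma 2.9]{PS2021a} on the long primal and dual elasticity Hilbert complex \eqref{elacomplex5}. That lemma takes, as its only hypotheses, closedness of the complex together with the compact embeddings $D(\A_{n})\cap D(\A_{n-1}^{*})\incl\H{}{n}$, and in return delivers exactly the package (i)--(vi). Both hypotheses are already in hand: Theorem \ref{theo:cptembzeroorder2} supplies closedness, and Theorem \ref{theo:cptembzeroorder} together with Remark \ref{rem:cptembzeroorder} supplies the compact embeddings at every position of the complex (with independence of $\eps$ and $\mu$).

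Granting this, each item is a direct translation from the abstract setting to the concrete operators. Closedness of the ranges (i) is immediate from closedness of the complex. The compactness of the reduced inverses $(\A_{n})_{\bot}^{-1}$ in (ii) follows by combining closed range (so that $(\A_{n})_{\bot}$ is a topological isomorphism onto $R(\A_{n})$) with the compact embedding $D(\A_{n})\cap D(\A_{n-1}^{*})\incl\H{}{n}$ restricted to $D\big((\A_{n})_{\bot}\big)\subset N(\A_{n})^{\bot}\subset R(\A_{n-1}^{*})\subset D(\A_{n-1}^{*})$. Finite dimensionality in (iii) is the familiar fact that the identity on $\Harm{}{\S,\gat,\gan,\eps}(\om)\subset D(\A_{1})\cap D(\A_{0}^{*})$ is compact. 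The Helmholtz decompositions (iv) are the orthogonal decompositions associated to the closed ranges, refined by inserting the harmonic space. The Friedrichs/Poincar\'e estimates (v) are just the operator-norm bounds $\norm{(\A_{n})_{\bot}^{-1}}$, and (vi) results from orthogonally decomposing an arbitrary $S$ in $D(\A_{1})\cap D(\A_{0}^{*})\cap\Harm{}{\S,\gat,\gan,\eps}(\om)^{\bot}$ according to (iv) and applying (v) to each component.

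For the $\eps$-independence of $\dim\Harm{}{\S,\gat,\gan,\eps}(\om)$ in (iii), I would argue directly: the kernels $N(\RotRottSgat)=\H{}{\S,\gat,0}(\RotRott,\om)$ and $N(\DivSgan)=\H{}{\S,\gan,0}(\Div,\om)$ are independent of $\eps$, and the $\L{2}{\S,\eps}$-orthogonal projection from $N(\RotRottSgat)$ onto $\Harm{}{\S,\gat,\gan,\eps}(\om)$ (along $R(\symGradgat)$, using (iv)) is a bounded linear bijection; concatenating two such projections for different weights yields the required isomorphism. For (vii), the extendable case is already covered by Theorem \ref{theo:closedhilcom}, which asserts exactness of the domain complexes, and exactness at the middle position combined with the Helmholtz decomposition (iv) forces $\Harm{}{\S,\gat,\gan,\eps}(\om)=\{0\}$.

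There is no serious obstacle: all the genuine analytic content (regular decompositions, compact embeddings, closedness of the complex) has been established in Section \ref{sec:regpotdeco1}. The mildly nontrivial points are the $\eps$-independence in (iii), which needs the short projection argument sketched above, and keeping careful track of the $\eps$- and $\mu$-weighted inner products when translating abstract adjoints into the concrete $-\DivSgan\eps$, $\eps^{-1}\RotRottSgan$, $-\symGradgan$; but this bookkeeping has already been done in the setup of \eqref{elacomplex5}.
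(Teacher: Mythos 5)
Your proposal is correct and follows exactly the paper's route: the paper proves this theorem precisely by invoking the abstract FA-ToolBox result \cite[Lemma 2.9]{PS2021a} on the complex \eqref{elacomplex5}, with the compact embeddings from Theorem \ref{theo:cptembzeroorder} and Remark \ref{rem:cptembzeroorder} (and the resulting closedness, Theorem \ref{theo:cptembzeroorder2}) as the only input. One cosmetic slip in your elaboration of (ii): the chain should read $D\big((\A_{n})_{\bot}\big)\subset N(\A_{n})^{\bot}\subset R(\A_{n-1})^{\bot}=N(\A_{n-1}^{*})\subset D(\A_{n-1}^{*})$, since $R(\A_{n-1}^{*})$ lives in the wrong Hilbert space; the conclusion you draw from it is nevertheless the right one.
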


\subsubsection{Higher Order Mini FA-ToolBox}
\label{sec:hofatb}%

For simplicity, let $\eps=\mu=\id$.
From Section \ref{sec:elacomplexes} we recall the 
densely defined and closed higher Sobolev order operators
\begin{align}
\begin{aligned}
\label{defhighorderop}
\symGradgatk:\H{k+1}{\gat}(\om)\subset\H{k}{\gat}(\om)&\to\H{k}{\S,\gat}(\om),\\
\RotRottSgatk:\H{k}{\S,\gat}(\RotRott\!\!,\om)\subset\H{k}{\S,\gat}(\om)&\to\H{k}{\S,\gat}(\om),\\
\RotRottSgatkkmo:\H{k,k-1}{\S,\gat}(\RotRott\!\!,\om)\subset\H{k}{\S,\gat}(\om)&\to\H{k-1}{\S,\gat}(\om),\qquad
k\geq1,\\
\DivSgatk:\H{k}{\S,\gat}(\Div,\om)\subset\H{k}{\S,\gat}(\om)&\to\H{k}{\gat}(\om),
\end{aligned}
\end{align}
building the long elasticity Hilbert complexes
\begin{equation}
\label{elacomplex6}
\scriptsize
\def\arrowlength{15ex}
\def\arrowdistance{.8}
\begin{tikzcd}[column sep=\arrowlength]
\RM_{\gat}
\arrow[r, rightarrow, shift left=\arrowdistance, "\iota_{\RM_{\gat}}"] 
& 
[-2em]
\H{k}{\gat}(\om)
\ar[r, rightarrow, shift left=\arrowdistance, "\symGradgatk"] 
&
[0em]
\H{k}{\S,\gat}(\om)
\ar[r, rightarrow, shift left=\arrowdistance, "\RotRottSgatk"] 
& 
[0em]
\H{k}{\S,\gat}(\om)
\arrow[r, rightarrow, shift left=\arrowdistance, "\DivSgatk"] 
& 
[-2em]
\H{k}{\gat}(\om)
\arrow[r, rightarrow, shift left=\arrowdistance, "\pi_{\RM_{\gan}}"] 
&
[-2em]
\RM_{\gan},\quad k\geq0,
\end{tikzcd}
\end{equation}
\vspace*{-6mm}
\begin{equation}
\label{elacomplex7}
\scriptsize
\def\arrowlength{15ex}
\def\arrowdistance{.8}
\begin{tikzcd}[column sep=\arrowlength]
\RM_{\gat}
\arrow[r, rightarrow, shift left=\arrowdistance, "\iota_{\RM_{\gat}}"] 
& 
[-2em]
\H{k}{\gat}(\om)
\ar[r, rightarrow, shift left=\arrowdistance, "\symGradgatk"] 
&
[0em]
\H{k}{\S,\gat}(\om)
\ar[r, rightarrow, shift left=\arrowdistance, "\RotRottSgatkkmo"] 
& 
[0em]
\H{k-1}{\S,\gat}(\om)
\arrow[r, rightarrow, shift left=\arrowdistance, "\DivSgatkmo"] 
& 
[-2em]
\H{k-1}{\gat}(\om)
\arrow[r, rightarrow, shift left=\arrowdistance, "\pi_{\RM_{\gan}}"] 
&
[-2em]
\RM_{\gan},\quad k\geq1.
\end{tikzcd}
\end{equation}

We start with regular representations
implied by Lemma \ref{lem:highorderregdecoela} and Corollary \ref{cor:weakstrongela}.

\begin{theo}[regular representations and closed ranges]
\label{theo:rangeselawithoutbdpot}
Let $k\geq0$. Then the regular potential representations 
\begin{align*}
R(\symGradgatk)
&=\symGrad\H{k}{\gat}(\symGrad,\om)
=\symGrad\H{k+1}{\gat}(\om)\\
&=\H{k}{\S,\gat}(\om)\cap R(\symGradgat)\\
&=\H{k}{\S,\gat}(\om)\cap\H{}{\S,\gat,0}(\RotRott,\om)\cap\Harm{}{\S,\gat,\gan,\eps}(\om)^{\bot_{\L{2}{\S,\eps}(\om)}}\\
&=\H{k}{\S,\gat,0}(\RotRott,\om)\cap\Harm{}{\S,\gat,\gan,\eps}(\om)^{\bot_{\L{2}{\S,\eps}(\om)}},\\
R(\RotRottSgatkpok)
=R(\RotRottSgatk)
&=\RotRott\H{k}{\S,\gat}(\RotRott,\om)
=\RotRott\H{k+2}{\S,\gat}(\om)\\
&=\RotRott\H{k+1,k}{\S,\gat}(\RotRott,\om)\\
&=\H{k}{\S,\gat}(\om)\cap R(\RotRottSgat)\\
&=\H{k}{\S,\gat}(\om)\cap\H{}{\S,\gat,0}(\Div,\om)\cap\Harm{}{\S,\gan,\gat,\eps}(\om)^{\bot_{\L{2}{\S}(\om)}},\\
&=\H{k}{\S,\gat,0}(\Div,\om)\cap\Harm{}{\S,\gan,\gat,\eps}(\om)^{\bot_{\L{2}{\S}(\om)}},\\
R(\DivSgatk)
&=\Div\H{k}{\S,\gat}(\Div,\om)
=\Div\H{k+1}{\S,\gat}(\om)\\
&=\H{k}{\gat}(\om)\cap R(\DivSgat)
=\H{k}{\gat}(\om)\cap(\RM_{\gan})^{\bot_{\L{2}{}(\om)}}
\end{align*}
hold. In particular, the latter spaces are closed subspaces of 
$\H{k}{\S}(\om)$ and $\H{k}{}(\om)$, respectively,
and all ranges of the higher Sobolev order operators 
in \eqref{defhighorderop} are closed.
Moroever, the long elasticity Hilbert complexes \eqref{elacomplex6} and \eqref{elacomplex7}
are closed.
\end{theo}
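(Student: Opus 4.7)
The plan is to combine three ingredients from earlier in the paper: the zero-order Helmholtz decompositions and closed ranges from Theorem \ref{theo:miniFATzeroorder}, the bounded regular decompositions of Lemma \ref{lem:highorderregdecoela}, and the identification of weak and strong boundary conditions in Corollary \ref{cor:weakstrongela}. The unified strategy for each of the three range identities is to produce a rough zero-order potential via the Helmholtz decomposition and then iteratively lift its Sobolev regularity, exploiting $\RotRott\symGrad=0$ and $\Div\RotRott=0$ so that corrections introduced by a regular decomposition are invisible to the subsequent operator.

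For $\symGradgatk$, the definitional identity $D(\symGradgatk)=\H{k+1}{\gat}(\om)$ (Corollary \ref{cor:weakstrongela}) yields $R(\symGradgatk)=\symGrad\H{k+1}{\gat}(\om)$. The inclusion $R(\symGradgatk)\subset\H{k}{\S,\gat}(\om)\cap R(\symGradgat)$ is clear. Conversely, given $S$ in the right-hand side, Theorem \ref{theo:miniFATzeroorder}(iv) furnishes $w\in\H{1}{\gat}(\om)$ with $w\perp\RM_{\gat}$ and $\symGrad w=S$; iterated application of Lemma \ref{lem:weakeqstrongela}(ii) (higher-order Korn) then promotes $w$ successively through $\H{2}{\gat}(\om),\H{3}{\gat}(\om),\ldots,\H{k+1}{\gat}(\om)$. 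The characterizations involving $\H{k}{\S,\gat,0}(\RotRott,\om)$ and the harmonic complement follow by intersecting the orthogonal Helmholtz decomposition of Theorem \ref{theo:miniFATzeroorder}(iv) with $\H{k}{\S,\gat}(\om)$, using strong $=$ weak to rewrite $\H{k}{\S,\gat}(\om)\cap\H{}{\S,\gat,0}(\RotRott,\om)$ as $\H{k}{\S,\gat,0}(\RotRott,\om)$.

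For $\DivSgatk$ and $\RotRottSgatk$, the identities $\Div\H{k}{\S,\gat}(\Div,\om)=\Div\H{k+1}{\S,\gat}(\om)$ and $\RotRott\H{k}{\S,\gat}(\RotRott,\om)=\RotRott\H{k+2}{\S,\gat}(\om)=\RotRott\H{k+1,k}{\S,\gat}(\RotRott,\om)$ (hence $R(\RotRottSgatk)=R(\RotRottSgatkpok)$) follow by substituting the regular decomposition of Lemma \ref{lem:highorderregdecoela} into the preimage and killing the $\symGrad$-summand via $\RotRott\symGrad=0$, respectively the $\RotRott$-summand via $\Div\RotRott=0$. The characterizations as $\H{k}{\gat}(\om)\cap\RM_{\gan}^{\bot_{\L{2}{}(\om)}}$ and its $\RotRott$-analogue come from taking a zero-order potential $T_0\in D(\DivSgat)$, respectively $M_0\in D(\RotRottSgat)$, from Theorem \ref{theo:miniFATzeroorder} and iterating Lemma \ref{lem:highorderregdecoela}. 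In the $\Div$-case each step replaces $T^{(j)}\in\H{j}{\S,\gat}(\Div,\om)$ with $\Div T^{(j)}=v\in\H{k}{\gat}(\om)$ by $T^{(j+1)}\in\H{j+1}{\S,\gat}(\om)$ via the regular decomposition, gaining one Sobolev order; after $k+1$ such steps the desired $\H{k+1}{\S,\gat}(\om)$-potential is obtained. In the $\RotRott$-case, one first reduces to $M^{(1)}\in\H{2}{\S,\gat}(\om)$ with $\RotRott M^{(1)}=T$ via the standard regular decomposition of $M_0$, and then iterates the non-standard decomposition $k$ times, using that $\RotRott M^{(j)}=T\in\H{k}{\gat}(\om)$ persists and forces $M^{(j)}\in\H{j+1,j}{\S,\gat}(\RotRott,\om)$ as long as $j\leq k$, to reach $M^{(k+1)}\in\H{k+2}{\S,\gat}(\om)$.

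Closedness of each range is then immediate from its characterization as an intersection of the closed subspaces $\H{k}{\S,\gat,0}(\RotRott,\om)$, $\H{k}{\S,\gat,0}(\Div,\om)$, respectively $\H{k}{\gat}(\om)$, with the finite-codimensional orthogonal complement of the corresponding harmonic space or of $\RM_{\gan}$; consequently, the long elasticity Hilbert complexes \eqref{elacomplex6} and \eqref{elacomplex7} are closed. I expect the main technical obstacle to be the $\RotRott$ bootstrap: because $\RotRott$ is second-order, a single application of Lemma \ref{lem:highorderregdecoela} offers a choice between the standard decomposition (gaining two Sobolev orders, but requiring full $\RotRott$-regularity) and the non-standard one (gaining one order with weaker $\RotRott$-regularity), and the iteration has to be arranged so that the available $\RotRott$-image regularity $\RotRott M^{(j)}=T\in\H{k}{\gat}(\om)$ is always just sufficient to reapply the lemma at the next level.
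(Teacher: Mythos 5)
Your proposal is correct and matches the paper's proof in all essentials: it combines the zero-order Helmholtz decompositions and closed ranges of Theorem \ref{theo:miniFATzeroorder} with the regular decompositions of Lemma \ref{lem:highorderregdecoela} and the identity strong $=$ weak, and then lifts the regularity of a zero-order potential by a finite bootstrap, exactly as the paper does for $R(\RotRottSgatk)$ (the other two cases being, as you note, the same but simpler). The only cosmetic difference is in the $\RotRott$ bootstrap, where the paper gains two Sobolev orders per step by re-invoking \eqref{rangeRotRotkp2} at successively higher levels, while you gain one order per step through the non-standard spaces $\H{j+1,j}{\S,\gat}(\RotRott\!\!,\om)$; both iterations terminate at the same $\H{k+2}{\S,\gat}(\om)$-potential.
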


Note that in Theorem \ref{theo:rangeselawithoutbdpot} 
we claim nothing about bounded regular potential operators, 
leaving the question of bounded potentials to the next sections,
cf.~Theorem \ref{theo:regpothigherorder}.

\begin{proof}[Proof of Theorem \ref{theo:rangeselawithoutbdpot}.]
We only show the representations for $R(\RotRottSgatk)$.
The other follow analogously, but simpler.
By Lemma \ref{lem:highorderregdecoela} and Corollary \ref{cor:weakstrongela} we have
\begin{align}
\nonumber
\RotRott\H{k+2}{\S,\gat}(\om)
&\subset\RotRott\H{k+1,k}{\S,\gat}(\RotRott,\om)
=R(\RotRottSgatkpok)\\
\nonumber
&\subset\RotRott\H{k}{\S,\gat}(\RotRott,\om)
=R(\RotRottSgatk)
=\RotRott\H{k+2}{\S,\gat}(\om).
\intertext{In particular,}
\label{rangeRotRotkp2}
R(\RotRottSgatk)
&=\RotRott\H{k}{\S,\gat}(\RotRott,\om)
=\RotRott\H{k+2}{\S,\gat}(\om).
\intertext{Moreover,}
\nonumber
R(\RotRottSgatk)
&\subset\H{k}{\S,\gat,0}(\Div,\om)\cap\Harm{}{\S,\gan,\gat,\eps}(\om)^{\bot_{\L{2}{\S}(\om)}}\\
\nonumber
&=\H{k}{\S,\gat}(\om)\cap\H{}{\S,\gat,0}(\Div,\om)\cap\Harm{}{\S,\gan,\gat,\eps}(\om)^{\bot_{\L{2}{\S}(\om)}}
=\H{k}{\S,\gat}(\om)\cap R(\RotRottSgat),
\end{align}
since by Theorem \ref{theo:miniFATzeroorder} (iv)
\begin{align}
\label{helmdecoela1}
R(\RotRottSgat)
=\H{}{\S,\gat,0}(\Div,\om)\cap\Harm{}{\S,\gan,\gat,\eps}(\om)^{\bot_{\L{2}{\S}(\om)}}.
\end{align}
Thus it remains to show
$$\H{k}{\S,\gat,0}(\Div,\om)\cap\Harm{}{\S,\gan,\gat,\eps}(\om)^{\bot_{\L{2}{\S}(\om)}}
\subset\RotRott\H{k}{\S,\gat}(\RotRott,\om),\qquad
k\geq1.$$
For this, let $k\geq1$ and
$T\in\H{k}{\S,\gat,0}(\Div,\om)\cap\Harm{}{\S,\gan,\gat,\eps}(\om)^{\bot_{\L{2}{\S}(\om)}}$.
By \eqref{helmdecoela1} and \eqref{rangeRotRotkp2} we have 
$$T\in R(\RotRottSgat)=\RotRott\H{2}{\S,\gat}(\om)$$
and hence there is $S_{1}\in\H{2}{\S,\gat}(\om)$ such that $\RotRott S_{1}=T$.
We see $S_{1}\in\H{2}{\S,\gat}(\RotRott,\om)$ resp. $S_{1}\in\H{1}{\S,\gat}(\RotRott,\om)$ if $k=1$.
Hence we are done for $k=1$ and $k=2$.
For $k\geq2$ we have 
$T\in\RotRott\H{2}{\S,\gat}(\RotRott,\om)=\RotRott\H{4}{\S,\gat}(\om)$ 
by \eqref{rangeRotRotkp2}.
Thus there is $S_{2}\in\H{4}{\S,\gat}(\om)$ such that $\RotRott S_{2}=T$.
Then $S_{2}\in\H{4}{\S,\gat}(\RotRott,\om)$ resp. $S_{2}\in\H{3}{\S,\gat}(\RotRott,\om)$ if $k=3$,
and we are done for $k=3$ and $k=4$.
After finitely many steps, we observe that
$T$ belongs to $\RotRott\H{k}{\S,\gat}(\RotRott,\om)$,
finishing the proof.
\end{proof}

The reduced operators corresponding to \eqref{defhighorderop} are
\begin{align*}
(\symGradgatk)_{\bot}:D\big((\symGradgatk)_{\bot}\big)\subset
(\RM_{\gat})^{\bot_{\H{k}{\gat}(\om)}}&\to R(\symGradgatk),\\
(\RotRottSgatk)_{\bot}:D\big((\RotRottSgatk)_{\bot}\big)\subset 
N(\RotRottSgatk)^{\bot_{\H{k}{\S,\gat}(\om)}}&\to R(\RotRottSgatk),\\
(\RotRottSgatkkmo)_{\bot}:D\big((\RotRottSgatkkmo)_{\bot}\big)\subset 
N(\RotRottSgatk)^{\bot_{\H{k}{\S,\gat}(\om)}}&\to R(\RotRottSgatkmo),\quad
k\geq1,\\
(\DivSgatk)_{\bot}:D\big((\DivSgatk)_{\bot}\big)\subset
N(\DivSgatk)^{\bot_{\H{k}{\S,\gat}(\om)}}&\to R(\DivSgatk)
\end{align*}
with domains of definition 
\begin{align*}
D\big((\symGradgatk)_{\bot}\big)
&=D(\symGradgatk)\cap(\RM_{\gat})^{\bot_{\H{k}{\gat}(\om)}},\\
D\big((\RotRottSgatk)_{\bot}\big)
&=D(\RotRottSgatk)\cap N(\RotRottSgatk)^{\bot_{\H{k}{\S,\gat}(\om)}},\\
D\big((\RotRottSgatkkmo)_{\bot}\big)
&=D(\RotRottSgatkkmo)\cap N(\RotRottSgatk)^{\bot_{\H{k}{\S,\gat}(\om)}},\quad
k\geq1,\\
D\big((\DivSgatk)_{\bot}\big)
&=D(\DivSgatk)\cap N(\DivSgatk)^{\bot_{\H{k}{\S,\gat}(\om)}}.
\end{align*}

\cite[Lemma 2.1]{PS2021a} and Theorem \ref{theo:rangeselawithoutbdpot} yield:

\begin{theo}[closed ranges and bounded inverse operators]
\label{theo:clrangebdinvela}
Let $k\geq0$. Then:
\begin{itemize}
\item[\bf(i)]
$R(\symGradgatk)=R\big((\symGradgatk)_{\bot}\big)$ are closed and, equivalently, 
the inverse operator 
\begin{align*}
(\symGradgatk)_{\bot}^{-1}:R(\symGradgatk)&\to D\big((\symGradgatk)_{\bot}\big)\\
\text{resp.}\qquad
(\symGradgatk)_{\bot}^{-1}:R(\symGradgatk)&\to D(\symGradgatk)
\end{align*}
is bounded. Equivalently, there is $c>0$ such that
for all $v\in D\big((\symGradgatk)_{\bot}\big)$
$$\norm{v}_{\H{k}{}(\om)}\leq c\norm{\symGrad v}_{\H{k}{\S}(\om)}.$$
\item[\bf(ii)]
$R\big((\RotRottSgatk)_{\bot}\big)
=R(\RotRottSgatk)
=R(\RotRottSgatkpok)
=R\big((\RotRottSgatkpok)_{\bot}\big)$ 
are closed and, equivalently, the inverse operators
\begin{align*}
(\RotRottSgatk)_{\bot}^{-1}:R(\RotRottSgatk)&\to D\big((\RotRottSgatk)_{\bot}\big)\\
\text{resp.}\qquad
(\RotRottSgatk)_{\bot}^{-1}:R(\RotRottSgatk)&\to D(\RotRottSgatk),\\
(\RotRottSgatkpok)_{\bot}^{-1}:R(\RotRottSgatk)&\to D\big((\RotRottSgatkpok)_{\bot}\big)\\
\text{resp.}\qquad
(\RotRottSgatkpok)_{\bot}^{-1}:R(\RotRottSgatk)&\to D(\RotRottSgatkpok)
\end{align*}
are bounded. Equivalently, there is $c>0$ such that for all
$S\in D\big((\RotRottSgatk)_{\bot}\big)$ resp. $S\in D\big((\RotRottSgatkpok)_{\bot}\big)$
$$\norm{S}_{\H{k}{\S}(\om)}\leq c\norm{\RotRott S}_{\H{k}{\S}(\om)}
\quad\text{resp.}\quad
\norm{S}_{\H{k+1}{\S}(\om)}\leq c\norm{\RotRott S}_{\H{k}{\S}(\om)}.$$
\item[\bf(iii)]
$R(\DivSgatk)=R\big((\DivSgatk)_{\bot}\big)$ are closed and, equivalently, 
the inverse operator 
\begin{align*}
(\DivSgatk)_{\bot}^{-1}:R(\DivSgatk)&\to D\big((\DivSgatk)_{\bot}\big)\\
\text{resp.}\qquad
(\DivSgatk)_{\bot}^{-1}:R(\DivSgatk)&\to D(\DivSgatk)
\end{align*}
is bounded. Equivalently, there is $c>0$ such that for all
$T\in D\big((\DivSgatk)_{\bot}\big)$
$$\norm{T}_{\H{k}{\S}(\om)}\leq c\norm{\Div T}_{\H{k}{}(\om)}.$$
\end{itemize}
\end{theo}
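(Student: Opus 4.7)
The plan is to derive both conclusions in each of (i)--(iii) by a single structural argument: obtain closed ranges directly from Theorem \ref{theo:rangeselawithoutbdpot}, and then invoke the abstract \cite[Lemma 2.1]{PS2021a} to translate closed ranges into bounded inverses of the reduced operators and into the corresponding Friedrichs/Poincar\'e-type estimates. No new analysis specific to elasticity is needed beyond a careful tracking of Hilbert-space topologies for each operator.

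First I would read off the closed range assertions from Theorem \ref{theo:rangeselawithoutbdpot}. That theorem identifies each of $R(\symGradgatk)$, $R(\RotRottSgatk)$, $R(\RotRottSgatkpok)$, and $R(\DivSgatk)$ with an intersection of a higher-order Sobolev space and the kernel of a zero-order operator, and states that these intersections are closed subspaces of $\H{k}{\S}(\om)$ or $\H{k}{}(\om)$. This already gives the closed-range half of each item, together with the coincidence $R(\RotRottSgatk)=R(\RotRottSgatkpok)$ used in (ii).

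Next I would invoke \cite[Lemma 2.1]{PS2021a}. For a densely defined closed linear operator $\A:D(\A)\subset X\to Y$ between Hilbert spaces, this lemma establishes the equivalence of: closedness of $R(\A)$ in $Y$; boundedness of the inverse of the reduced operator $\A_{\bot}:D(\A)\cap N(\A)^{\bot_{X}}\to R(\A)$; and the Friedrichs/Poincar\'e-type inequality $\norm{x}_{X}\leq c\norm{\A x}_{Y}$ for all $x\in D(\A_{\bot})$. All four operators in \eqref{defhighorderop} are densely defined and closed between the specified Hilbert spaces by construction in Section \ref{sec:elacomplexes} (relying on Corollary \ref{cor:weakstrongela} to make sense of weak versus strong boundary conditions in the graph-norm definition), so feeding the closed-range information from the previous step into this equivalence yields (i), (iii), and the $R(\RotRottSgatk)$-part of (ii) with their stated Friedrichs/Poincar\'e-type estimates.

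The one point requiring care, which I expect to be the main obstacle, is the operator $\RotRottSgatkpok$ appearing in (ii). Its domain $\H{k+1,k}{\S,\gat}(\RotRott,\om)$ endowed with its graph norm is a Hilbert space whose topology is by definition the full $\H{k+1}{\S}(\om)$-topology on $S$ together with the $\H{k}{\S}(\om)$-topology on $\RotRott S$; in particular the source space $X$ in the abstract lemma has to be taken as $\H{k+1}{\S,\gat}(\om)$, not merely as $\H{k}{\S,\gat}(\om)$. Applying the equivalence of \cite[Lemma 2.1]{PS2021a} to this triple, the bounded inverse therefore produces the stronger estimate $\norm{S}_{\H{k+1}{\S}(\om)}\leq c\norm{\RotRott S}_{\H{k}{\S}(\om)}$ claimed in (ii), and identifies $N(\RotRottSgatkpok)^{\bot_{\H{k+1}{\S,\gat}(\om)}}$ with $N(\RotRottSgatk)^{\bot_{\H{k}{\S,\gat}(\om)}}$ as expected. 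I do not anticipate further complications beyond this bookkeeping of source and target topologies.
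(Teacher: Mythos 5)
Your proposal is correct and follows essentially the same route as the paper, whose proof is precisely the one-line combination of the closed-range statements of Theorem \ref{theo:rangeselawithoutbdpot} with the abstract equivalence of \cite[Lemma 2.1]{PS2021a} applied to the operators in \eqref{defhighorderop}, including the point that for $\RotRottSgatkpok$ the source space is $\H{k+1}{\S,\gat}(\om)$, which yields the stronger $\H{k+1}{\S}(\om)$-estimate. Only your closing aside that $N(\RotRottSgatkpok)^{\bot_{\H{k+1}{\S,\gat}(\om)}}$ is ``identified'' with $N(\RotRottSgatk)^{\bot_{\H{k}{\S,\gat}(\om)}}$ is unneeded (and not literally true, as the complements are taken with respect to different inner products); the reduced operator $(\RotRottSgatkpok)_{\bot}$ is simply defined via the complement in $\H{k+1}{\S,\gat}(\om)$, which your bookkeeping already supplies.
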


\begin{lem}[Schwarz' lemma]
\label{lem:scharzlemma}
Let $0\leq|\alpha|\leq k$.
\begin{itemize}
\item[\bf(i)]
For $S\in\H{k}{\S,\gat}(\RotRott\!\!,\om)$ resp.
$S\in\H{k+1,k}{\S,\gat}(\RotRott\!\!,\om)$ it holds 
$\p^{\alpha}S\in\H{}{\S,\gat}(\RotRott\!\!,\om)$ resp.
$\p^{\alpha}S\in\H{1,0}{\S,\gat}(\RotRott\!\!,\om)$
and $\RotRott\p^{\alpha}S=\p^{\alpha}\RotRott S$.
\item[\bf(ii)]
For $T\in\H{k}{\S,\gat}(\Div,\om)$ it holds 
$\p^{\alpha}T\in\H{}{\S,\gat}(\Div,\om)$ and $\Div\p^{\alpha}T=\p^{\alpha}\Div T$.
\end{itemize}
\end{lem}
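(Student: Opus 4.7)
The plan is to reduce the claim to a formal integration-by-parts computation by invoking the ``weak $=$ strong'' equivalence established in Corollary \ref{cor:weakstrongela} (together with Corollary \ref{cor:weakeqstrongderham} and Lemma \ref{lem:weakeqstrongderham} for the underlying standard Sobolev spaces). Since partial derivatives commute with $\RotRott$ and $\Div$ in the distributional sense, and the required $\L{2}{}$-regularity of $\p^\alpha S$, $\p^\alpha \RotRott S$, $\p^\alpha T$, and $\p^\alpha \Div T$ follows immediately from the hypotheses (for instance, $S \in \H{k}{\gat}(\om)$ and $|\alpha|\leq k$ give $\p^\alpha S \in \H{k-|\alpha|}{\gat}(\om)\subset\L{2}{\S}(\om)$), the only real task is to verify that $\p^\alpha S$ and $\p^\alpha T$ satisfy the respective \emph{weak} partial boundary conditions on $\gat$.

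For part (ii), fix $T\in\H{k}{\S,\gat}(\Div,\om)$ and any test field $\phi\in\C{\infty}{\gan}(\om)$. Observe that $\p^\alpha\phi$ and $\symGrad\p^\alpha\phi$ again lie in $\C{\infty}{\gan}(\om)$ and $\C{\infty}{\S,\gan}(\om)$ respectively. I would then chain four integration-by-parts steps: move $\p^\alpha$ off $\Div T\in\bH{k}{\gat}(\om)$ onto $\phi$, pass $\Div$ from $T\in\bH{}{\S,\gat}(\Div,\om)$ onto $\symGrad\p^\alpha\phi$, commute $\p^\alpha$ with $\symGrad$ on the smooth field, and finally transfer $\p^\alpha$ back from $T\in\bH{k}{\S,\gat}(\om)$ (which equals $\H{k}{\S,\gat}(\om)$ by Corollary \ref{cor:weakeqstrongderham}) onto $\symGrad\phi$. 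The resulting identity $\scp{\Div\p^\alpha T}{\phi}_{\L{2}{}(\om)}=-\scp{\p^\alpha T}{\symGrad\phi}_{\L{2}{}(\om)}$ shows $\p^\alpha T\in\bH{}{\S,\gat}(\Div,\om)$, and invoking weak $=$ strong one last time concludes part (ii).

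For part (i) the scheme is entirely analogous, with $\Div$ replaced by the second-order operator $\RotRott$ and the test field taken as $\Psi\in\C{\infty}{\S,\gan}(\om)$. One uses that both $\RotRott\Psi$ and $\p^\alpha\Psi$ remain smooth symmetric test fields supported away from $\gan$, so the same shuffle passes $\p^\alpha$ successively off $\RotRott S$, then $\RotRott$ off $S$, commutes $\p^\alpha$ with $\RotRott$ on a smooth field, and finally moves $\p^\alpha$ back onto $S$ through $\bH{k}{\S,\gat}(\om)$. In the non-standard case $S\in\H{k+1,k}{\S,\gat}(\RotRott\!\!,\om)$, the extra order of regularity of $S$ improves $\p^\alpha S$ to $\H{k+1-|\alpha|}{\gat}(\om)\subset\H{1}{\S,\gat}(\om)$ while $\p^\alpha\RotRott S$ still lies in $\L{2}{\S}(\om)$, yielding the sharper conclusion $\p^\alpha S\in\H{1,0}{\S,\gat}(\RotRott\!\!,\om)$.

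The main bookkeeping obstacle is ensuring that every intermediate test field produced by applying $\p^\alpha$, $\symGrad$, or $\RotRott$ to a smooth field in $\C{\infty}{\S,\gan}(\om)$ or $\C{\infty}{\gan}(\om)$ stays in the appropriate test space; this however is automatic, since these spaces are invariant under smooth differential operators (supports remain bounded away from $\gan$). Thus once the weak/strong equivalence is in place, the commutation becomes an essentially routine exercise in successive integration by parts.
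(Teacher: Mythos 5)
Your proposal is correct and follows essentially the same route as the paper: a chain of integration by parts against test fields in $\C{\infty}{\gan}(\om)$ resp.\ $\C{\infty}{\S,\gan}(\om)$, using the weak characterisations of the boundary conditions on $S$ (resp.\ $T$), $\RotRott S$ (resp.\ $\Div T$), and the standard Sobolev spaces, to show that $\p^{\alpha}S$ resp.\ $\p^{\alpha}T$ satisfies the weak boundary condition, followed by an appeal to Corollary \ref{cor:weakstrongela} to upgrade weak to strong. The paper writes the identical computation starting from $\scp{\p^{\alpha}S}{\RotRott\Phi}_{\L{2}{\S}(\om)}$ rather than from the left-hand side, but the ingredients and logic coincide.
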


\begin{proof}
Let $S\in\H{k}{\S,\gat}(\RotRott\!\!,\om)$. For $\Phi\in\C{\infty}{\gan}(\om)$ we have 
\begin{align*}
\scp{\p^{\alpha}S}{\RotRott\Phi}_{\L{2}{\S}(\om)}
&=(-1)^{|\alpha|}\scp{S}{\RotRott\p^{\alpha}\Phi}_{\L{2}{\S}(\om)}\\
&=(-1)^{|\alpha|}\scp{\RotRott S}{\p^{\alpha}\Phi}_{\L{2}{\S}(\om)}
=\scp{\p^{\alpha}\RotRott S}{\Phi}_{\L{2}{\S}(\om)}
\end{align*}
as $S\in\H{k}{\S,\gat}(\om)\cap\H{}{\S,\gat}(\RotRott,\om)$ and
$\RotRott S\in\H{k}{\S,\gat}(\om)$. Hence
$$\p^{\alpha}S\in\bH{}{\S,\gat}(\RotRott\!\!,\om)
=\H{}{\S,\gat}(\RotRott\!\!,\om)$$
by Corollary \ref{cor:weakstrongela} and 
$\RotRott\p^{\alpha}S=\p^{\alpha}\RotRott S$.
The other assertions follow analogously.
\end{proof}

\begin{theo}[compact embedding]
\label{theo:cptembhigherorder}
Let $k\geq0$. Then the embedding
$$\H{k}{\S,\gat}(\RotRott\!\!,\om)\cap\H{k}{\S,\gan}(\Div,\om)
\incl\H{k}{\S,\ga}(\om)$$
is compact.
\end{theo}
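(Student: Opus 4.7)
The plan is to reduce the claim to the zero-order compact embedding (Theorem \ref{theo:cptembzeroorder}) via Schwarz's lemma. For $k=0$ the target space is $\H{0}{\S,\ga}(\om)=\L{2}{\S}(\om)$, so the statement is exactly Theorem \ref{theo:cptembzeroorder}; I may therefore assume $k\geq1$.

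Let $(S_{n})$ be a bounded sequence in $\H{k}{\S,\gat}(\RotRott\!\!,\om)\cap\H{k}{\S,\gan}(\Div,\om)$. For each multi-index $\alpha$ with $|\alpha|\leq k$, Schwarz's lemma (Lemma \ref{lem:scharzlemma}) yields
$$\p^{\alpha}S_{n}\in\H{}{\S,\gat}(\RotRott\!\!,\om)\cap\H{}{\S,\gan}(\Div,\om),\qquad
\RotRott\p^{\alpha}S_{n}=\p^{\alpha}\RotRott S_{n},\qquad
\Div\p^{\alpha}S_{n}=\p^{\alpha}\Div S_{n},$$
and the corresponding zero-order norm is bounded by the $\H{k}{\S}(\om)$-norm of $S_{n}$. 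Theorem \ref{theo:cptembzeroorder} then provides, for each fixed $\alpha$, a subsequence along which $\p^{\alpha}S_{n}$ converges in $\L{2}{\S}(\om)$. Successively extracting over the finitely many admissible $\alpha$'s produces a single subsequence, still denoted $(S_{n})$, along which $\p^{\alpha}S_{n}$ converges in $\L{2}{\S}(\om)$ for every $|\alpha|\leq k$. Consequently $(S_{n})$ is Cauchy in $\H{k}{\S}(\om)$ and hence converges to some $S\in\H{k}{\S}(\om)$.

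It remains to identify the target space and verify that the limit $S$ lies in $\H{k}{\S,\ga}(\om)$. The defining relations in Section \ref{sec:highsobolev} together with Corollary \ref{cor:weakstrongela} give the inclusions $\H{k}{\S,\gat}(\RotRott\!\!,\om)\subset\H{k}{\S,\gat}(\om)$ and $\H{k}{\S,\gan}(\Div,\om)\subset\H{k}{\S,\gan}(\om)$, so that each $S_{n}$ sits in $\H{k}{\S,\gat}(\om)\cap\H{k}{\S,\gan}(\om)$. Using weak-equals-strong (Lemma \ref{lem:weakeqstrongderham} together with Corollary \ref{cor:weakeqstrongderham}) and a standard cutoff/partition-of-unity argument based on the covering $\ga=\ol{\gat}\cup\gan$, this intersection coincides with $\H{k}{\S,\ga}(\om)$, which is closed in $\H{k}{\S}(\om)$. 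Thus $S\in\H{k}{\S,\ga}(\om)$, finishing the proof.

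I expect the main obstacle to be precisely this final identification of the target space, i.e., confirming that the two partial boundary conditions on $\gat$ and $\gan$ together amount to the full boundary condition on $\ga$; the Schwarz-based reduction to the zero-order compact embedding is, by contrast, essentially a routine diagonal extraction.
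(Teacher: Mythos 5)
Your proof is correct and follows essentially the same route as the paper: reduce to the zero-order compact embedding (Theorem \ref{theo:cptembzeroorder}) via Schwarz's lemma (Lemma \ref{lem:scharzlemma}) and extract convergent subsequences of the derivatives. The only cosmetic difference is that the paper runs an induction on $k$ and treats only the top-order derivatives $|\alpha|=k$ at each step, whereas you apply the zero-order result to all $\p^{\alpha}S_{n}$ with $|\alpha|\leq k$ at once with a finite diagonal extraction; also, the identification $\H{k}{\S,\gat}(\om)\cap\H{k}{\S,\gan}(\om)=\H{k}{\S,\ga}(\om)$ that you single out as the main obstacle is simply asserted in the paper at the same level of detail.
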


\begin{proof}
We follow in close lines the proof of \cite[Theorem 4.11]{PZ2020b},
cf.~\cite[Theorem 4.16]{PS2021a}, using induction.
The case $k=0$ is given by Theorem \ref{theo:cptembzeroorder}.
Let $k\geq1$ and let $(S_{\ell})$ be a bounded sequence in 
$\H{k}{\S,\gat}(\RotRott\!\!,\om)\cap\H{k}{\S,\gan}(\Div,\om)$.
Note that 
$$\H{k}{\S,\gat}(\RotRott\!\!,\om)\cap\H{k}{\S,\gan}(\Div,\om)
\subset\H{k}{\S,\gat}(\om)\cap\H{k}{\S,\gan}(\om)
=\H{k}{\S,\ga}(\om).$$
By assumption and w.l.o.g. we have that $(S_{\ell})$ 
is a Cauchy sequence in $\H{k-1}{\S,\ga}(\om)$.
Moreover, for all $|\alpha|=k$ we have
$\p^{\alpha}S_{\ell}\in\H{}{\S,\gat}(\RotRott\!\!,\om)\cap\H{}{\S,\gan}(\Div,\om)$
with $\RotRott\p^{\alpha}S_{\ell}=\p^{\alpha}\RotRott S_{\ell}$
and $\Div\p^{\alpha}S_{\ell}=\p^{\alpha}\Div S_{\ell}$
by Lemma \ref{lem:scharzlemma}.
Hence $(\p^{\alpha}S_{\ell})$ is a bounded sequence in the zero order space
$\H{}{\S,\gat}(\RotRott\!\!,\om)\cap\H{}{\S,\gan}(\Div,\om)$.
Thus, w.l.o.g. $(\p^{\alpha}S_{\ell})$ is a Cauchy sequence in $\L{2}{\S}(\om)$
by Theorem \ref{theo:cptembzeroorder}.
Finally, $(S_{\ell})$ is a Cauchy sequence in $\H{k}{\S,\ga}(\om)$,
finishing the proof.
\end{proof}

\begin{rem}[compact embedding]
\label{rem:cptembhigherorder}
For $k\geq1$, cf.~\cite[Remark 4.12]{PZ2020b}, there is
another and slightly more general proof using a variant of \cite[Lemma 2.22]{PS2021a}. 

For this, let $(S_{\ell})$ be a bounded sequence in 
$\H{k}{\S,\gat}(\RotRott\!\!,\om)\cap\H{k}{\S,\gan}(\Div,\om)$.
In particular, $(S_{\ell})$ is bounded in 
$\H{k,k-1}{\S,\gat}(\RotRott\!\!,\om)\cap\H{k}{\S,\gan}(\Div,\om)$.
According to Lemma \ref{lem:highorderregdecoela} we decompose 
$S_{\ell}=T_{\ell}+\symGrad v_{\ell}$
with $T_{\ell}\in\H{k+1}{\S,\gat}(\om)$ and $v_{\ell}\in\H{k+1}{\gat}(\om)$.
By the boundedness of the regular decomposition operators,
$(T_{\ell})$ and $(v_{\ell})$ are bounded in 
$\H{k+1}{\S,\gat}(\om)$ and $\H{k+1}{\gat}(\om)$, respectively.
W.l.o.g. $(T_{\ell})$ and $(v_{\ell})$ converge in 
$\H{k}{\S,\gat}(\om)$ and $\H{k}{\gat}(\om)$, respectively.
For all $0\leq|\alpha|\leq k$ Lemma \ref{lem:scharzlemma} yields
$(\p^{\alpha}S_{\ell})\subset\H{}{\S,\gan}(\Div,\om)$
and $\Div\p^{\alpha}T=\p^{\alpha}\Div T$. With 
$S_{\ell,l}:=S_{\ell}-S_{l}$, $T_{\ell,l}:=T_{\ell}-T_{l}$, 
and $v_{\ell,l}:=v_{\ell}-v_{l}$ we get
\begin{align*}
\norm{S_{\ell,l}}_{\H{k}{\S}(\om)}^2
&=\scp{S_{\ell,l}}{T_{\ell,l}}_{\H{k}{\S}(\om)}
+\scp{S_{\ell,l}}{\symGrad v_{\ell,l}}_{\H{k}{\S}(\om)}\\
&=\scp{S_{\ell,l}}{T_{\ell,l}}_{\H{k}{\S}(\om)}
-\scp{\Div S_{\ell,l}}{v_{\ell,l}}_{\H{k}{}(\om)}
\leq c\big(\norm{T_{\ell,l}}_{\H{k}{\S}(\om)}+\norm{v_{\ell,l}}_{\H{k}{}(\om)}\big)
\to0.
\end{align*}
\end{rem}

The latter remark shows immediately:

\begin{theo}[compact embedding]
\label{theo:cptembhigherorderkkmo}
Let $k\geq1$. Then the embedding
$$\H{k,k-1}{\S,\gat}(\RotRott\!\!,\om)\cap\H{k}{\S,\gan}(\Div,\om)
\incl\H{k}{\S,\ga}(\om)$$
is compact.
\end{theo}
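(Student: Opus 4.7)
The plan is to mimic the argument of Remark \ref{rem:cptembhigherorder} essentially verbatim, with the only change that the starting space is the weaker $\H{k,k-1}{\S,\gat}(\RotRott,\om)$ rather than $\H{k}{\S,\gat}(\RotRott,\om)$. This is enabled by the fact that Lemma \ref{lem:highorderregdecoela} provides a bounded regular decomposition already on this weaker space into exactly the same target regularity $\H{k+1}{\S,\gat}(\om)+\symGrad\H{k+1}{\gat}(\om)$.

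Concretely, let $(S_\ell)$ be a bounded sequence in $\H{k,k-1}{\S,\gat}(\RotRott,\om)\cap\H{k}{\S,\gan}(\Div,\om)$. First I would invoke the non-standard regular decomposition from Lemma \ref{lem:highorderregdecoela} to write
\begin{equation*}
S_\ell=T_\ell+\symGrad v_\ell,\qquad
T_\ell:=\PotQ_{\RotRottS,\gat}^{k,k-1,1}S_\ell\in\H{k+1}{\S,\gat}(\om),\qquad
v_\ell:=\PotQ_{\RotRottS,\gat}^{k,k-1,0}S_\ell\in\H{k+1}{\gat}(\om),
\end{equation*}
with bounds controlled by $\|S_\ell\|_{\H{k,k-1}{\S,\gat}(\RotRott,\om)}$. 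By Rellich's selection theorem, after extracting subsequences, $(T_\ell)$ converges in $\H{k}{\S,\gat}(\om)$ and $(v_\ell)$ converges in $\H{k}{\gat}(\om)$.

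Second, I would use Schwarz' Lemma \ref{lem:scharzlemma} applied componentwise in multi-indices $|\alpha|\leq k$ to justify that $\partial^{\alpha}S_\ell\in\H{}{\S,\gan}(\Div,\om)$ with $\Div\partial^{\alpha}S_\ell=\partial^{\alpha}\Div S_\ell$, so that partial integration against $\partial^{\alpha}v_{\ell,m}$ (where $v_\ell$ has vanishing trace on $\gat$) is legitimate. With $S_{\ell,m}:=S_\ell-S_m$, $T_{\ell,m}:=T_\ell-T_m$, $v_{\ell,m}:=v_\ell-v_m$, I would then estimate
\begin{align*}
\norm{S_{\ell,m}}_{\H{k}{\S}(\om)}^2
&=\scp{S_{\ell,m}}{T_{\ell,m}}_{\H{k}{\S}(\om)}
+\scp{S_{\ell,m}}{\symGrad v_{\ell,m}}_{\H{k}{\S}(\om)}\\
&=\scp{S_{\ell,m}}{T_{\ell,m}}_{\H{k}{\S}(\om)}
-\scp{\Div S_{\ell,m}}{v_{\ell,m}}_{\H{k}{}(\om)}
\leq c\bigl(\norm{T_{\ell,m}}_{\H{k}{\S}(\om)}+\norm{v_{\ell,m}}_{\H{k}{}(\om)}\bigr)\to 0,
\end{align*}
where the constant $c$ comes from the uniform bound of $(S_\ell)$ in $\H{k}{\S,\gat}$ (for the first term) and of $(\Div S_\ell)$ in $\H{k}{}(\om)$ (for the second). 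Hence $(S_\ell)$ is Cauchy in $\H{k}{\S,\ga}(\om)$.

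I do not expect any genuine obstacle: the only thing to double-check is that Lemma \ref{lem:scharzlemma}(ii) continues to apply to $(S_\ell)$ even though $S_\ell$ is only in $\H{k,k-1}{\S,\gat}(\RotRott,\om)$ -- but the argument there uses only $S_\ell\in\H{k}{\S,\gan}(\Div,\om)$, which is part of the hypothesis. The RotRot-regularity is used merely to produce the decomposition, and for that the non-standard space suffices by construction of Lemma \ref{lem:highorderregdecoela}.
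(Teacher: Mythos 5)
Your proposal is correct and is exactly the route the paper takes: the paper states that Theorem \ref{theo:cptembhigherorderkkmo} follows ``immediately'' from Remark \ref{rem:cptembhigherorder}, precisely because that argument only ever uses the bounded regular decomposition of $\H{k,k-1}{\S,\gat}(\RotRott\!\!,\om)$ from Lemma \ref{lem:highorderregdecoela} together with Lemma \ref{lem:scharzlemma} applied to the $\Div$-part of the intersection. Your verification that the $\RotRott$-regularity enters only through the decomposition, while the integration by parts needs only $S_\ell\in\H{k}{\S,\gan}(\Div,\om)$, is the right observation and matches the paper's intent.
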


\begin{theo}[Friedrichs/Poincar\'e type estimate]
\label{theo:friedrichspoincarehigherorder}
There exists $\widehat{c}_{k}>0$ such that for all 
$S$ in $\H{k}{\S,\gat}(\RotRott\!\!,\om)
\cap\H{k}{\S,\gan}(\Div,\om)
\cap\Harm{}{\S,\gat,\gan,\id}(\om)^{\bot_{\L{2}{\S}(\om)}}$
$$\norm{S}_{\H{k}{\S}(\om)}\leq 
\widehat{c}_{k}\big(\norm{\RotRott S}_{\H{k}{\S}(\om)}
+\norm{\Div S}_{\H{k}{}(\om)}\big).$$
The condition $\Harm{}{\S,\gat,\gan,\id}(\om)^{\bot_{\L{2}{\S}(\om)}}$
can be replaced by the weaker conditions 
$\Harm{k}{\S,\gat,\gan,\id}(\om)^{\bot_{\L{2}{\S}(\om)}}$ or
$\Harm{k}{\S,\gat,\gan,\id}(\om)^{\bot_{\H{k}{\S}(\om)}}$.
In particular, it holds
\begin{align*}
\forall\,S&\in\H{k}{\S,\gat}(\RotRott\!\!,\om)
\cap R(\RotRottSgank)
&\norm{S}_{\H{k}{\S}(\om)}
&\leq\widehat{c}_{k}\norm{\RotRott S}_{\H{k}{\S}(\om)},\\
\forall\,S&\in\H{k}{\S,\gan}(\Div,\om)
\cap R(\symGradgatk)
&\norm{S}_{\H{k}{\S}(\om)}
&\leq\widehat{c}_{k}\norm{\Div S}_{\H{k}{}(\om)}
\end{align*}
with
\begin{align*}
R(\RotRottSgankpok)=R(\RotRottSgank)
&=\H{k}{\S,\gan,0}(\Div,\om)\cap\Harm{}{\S,\gat,\gan,\id}(\om)^{\bot_{\L{2}{\S}(\om)}},\\
R(\symGradgatk)&=\H{k}{\S,\gat,0}(\RotRott,\om)\cap\Harm{}{\S,\gat,\gan,\id}(\om)^{\bot_{\L{2}{\S}(\om)}}.
\end{align*}

Analogously, for $k\geq1$ there exists $\widehat{c}_{k,k-1}>0$ such that 
$$\norm{S}_{\H{k}{\S}(\om)}\leq 
\widehat{c}_{k,k-1}\big(\norm{\RotRott S}_{\H{k-1}{\S}(\om)}
+\norm{\Div S}_{\H{k}{}(\om)}\big)$$
for all $S$ in $\H{k,k-1}{\S,\gat}(\RotRott\!\!,\om)
\cap\H{k}{\S,\gan}(\Div,\om)
\cap\Harm{}{\S,\gat,\gan,\id}(\om)^{\bot_{\L{2}{\S}(\om)}}$.
Moreover, 
\begin{align*}
\forall\,S&\in\H{k,k-1}{\S,\gat}(\RotRott\!\!,\om)
\cap R(\RotRottSgank)
&\norm{S}_{\H{k}{\S}(\om)}
&\leq\widehat{c}_{k,k-1}\norm{\RotRott S}_{\H{k-1}{\S}(\om)}.
\end{align*}
\end{theo}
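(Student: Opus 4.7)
The plan is to prove the main Friedrichs/Poincar\'e type estimate by the classical Peetre--Tartar compactness--contradiction argument, using Theorem \ref{theo:cptembhigherorder} as the crucial compact embedding. Assuming the estimate fails, a standard normalising-and-extracting procedure produces a sequence $(S_{\ell})$ in $\H{k}{\S,\gat}(\RotRott,\om)\cap\H{k}{\S,\gan}(\Div,\om)\cap\Harm{}{\S,\gat,\gan,\id}(\om)^{\bot_{\L{2}{\S}(\om)}}$ with $\norm{S_{\ell}}_{\H{k}{\S}(\om)}=1$ while $\norm{\RotRott S_{\ell}}_{\H{k}{\S}(\om)}+\norm{\Div S_{\ell}}_{\H{k}{}(\om)}\to 0$. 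I would then invoke Theorem \ref{theo:cptembhigherorder} to extract a subsequence converging to some $S$ in $\H{k}{\S,\ga}(\om)$, giving $\norm{S}_{\H{k}{\S}(\om)}=1$; the resulting $\L{2}{\S}(\om)$-convergence combined with closedness of $\RotRottSgat$ and $\DivSgan$ forces $\RotRott S=0$ and $\Div S=0$, so $S\in\Harm{}{\S,\gat,\gan,\id}(\om)$. Since $\L{2}{\S}(\om)$-orthogonality against this fixed finite-dimensional space passes to the limit, $S=0$ follows, contradicting $\norm{S}_{\H{k}{\S}(\om)}=1$.

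For the weaker orthogonality conditions $\Harm{k}{\S,\gat,\gan,\id}(\om)^{\bot_{\L{2}{\S}(\om)}}$ and $\Harm{k}{\S,\gat,\gan,\id}(\om)^{\bot_{\H{k}{\S}(\om)}}$ the same scheme applies: the limit $S$ now lies in $\H{k}{\S,\gat,0}(\RotRott,\om)\cap\H{k}{\S,\gan,0}(\Div,\om)=\Harm{k}{\S,\gat,\gan,\id}(\om)$, thanks to the $\H{k}{\S,\ga}(\om)$-convergence and the Schwarz-type Lemma \ref{lem:scharzlemma}, while the chosen orthogonality passes to the limit by continuity of the respective inner product, again forcing $S=0$. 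The two specialised estimates on $R(\RotRottSgank)$ and $R(\symGradgatk)$ follow directly from the main estimate via the range representations of Theorem \ref{theo:rangeselawithoutbdpot} (with $\gat$ and $\gan$ interchanged in the $\RotRott$-case): any $S\in\H{k}{\S,\gat}(\RotRott,\om)\cap R(\RotRottSgank)$ automatically lies in $\H{k}{\S,\gan,0}(\Div,\om)\cap\Harm{}{\S,\gat,\gan,\id}(\om)^{\bot_{\L{2}{\S}(\om)}}$, so the $\Div$-term in the general estimate vanishes; the $\symGrad$-case is symmetric.

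The $(k,k-1)$-variant is handled identically, simply replacing Theorem \ref{theo:cptembhigherorder} by Theorem \ref{theo:cptembhigherorderkkmo} in the compactness step. The main obstacle I anticipate is essentially bookkeeping: carefully ensuring, in the weakest orthogonality form $\Harm{k}{\S,\gat,\gan,\id}(\om)^{\bot_{\H{k}{\S}(\om)}}$, that the limit of the extracted subsequence really sits in the smaller kernel $\Harm{k}{\S,\gat,\gan,\id}(\om)$ and not only in $\Harm{}{\S,\gat,\gan,\id}(\om)$; this requires that $\H{k}{\S,\gat,0}(\RotRott,\om)$ and $\H{k}{\S,\gan,0}(\Div,\om)$ be closed subspaces of $\H{k}{\S,\ga}(\om)$, which follows from the boundedness and closedness of $\RotRottSgatk$ and $\DivSgank$ combined with Lemma \ref{lem:scharzlemma}.
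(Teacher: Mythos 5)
Your proposal is correct and follows essentially the same route as the paper: a compactness--contradiction argument based on Theorem \ref{theo:cptembhigherorder} (resp.\ Theorem \ref{theo:cptembhigherorderkkmo} for the $(k,k-1)$-variant), with the limit forced into $\Harm{}{\S,\gat,\gan,\id}(\om)\cap\Harm{}{\S,\gat,\gan,\id}(\om)^{\bot_{\L{2}{\S}(\om)}}=\{0\}$, and the specialised estimates obtained by restriction via Theorem \ref{theo:rangeselawithoutbdpot}. The only cosmetic difference is that the paper extracts the weak limit first and then upgrades to strong convergence, whereas you pass to the strong limit directly; both are standard phrasings of the same argument.
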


\begin{proof}
We follow the proof of \cite[Theorem 4.17]{PS2021a}.
To show the first estimate, we use a standard strategy and assume the contrary. Then there is a sequence 
$$(S_{\ell})\subset\H{k}{\S,\gat}(\RotRott\!\!,\om)
\cap\H{k}{\S,\gan}(\Div,\om)
\cap\Harm{}{\S,\gat,\gan,\id}(\om)^{\bot_{\L{2}{\S}(\om)}}$$
with $\norm{S_{\ell}}_{\H{k}{\S}(\om)}=1$ and 
$\norm{\RotRott S_{\ell}}_{\H{k}{\S}(\om)}
+\norm{\Div S_{\ell}}_{\H{k}{}(\om)}\to0$.
Hence we may assume that $S_{\ell}$ converges weakly to some $S$ in
$\H{k}{\S}(\om)\cap\Harm{}{\S,\gat,\gan,\id}(\om)\cap\Harm{}{\S,\gat,\gan,\id}(\om)^{\bot_{\L{2}{\S}(\om)}}$.
Thus $S=0$. By Theorem \ref{theo:cptembhigherorder} $(S_{\ell})$ converges strongly to $0$ in $\H{k}{\S}(\om)$,
in contradiction to $\norm{S_{\ell}}_{\H{k}{}(\om)}=1$.
The other estimates follow analogously 
resp. with Theorem \ref{theo:rangeselawithoutbdpot} by restriction.
\end{proof}

\begin{rem}[Friedrichs/Poincar\'e/Korn type estimate]
\label{rem:friedrichspoincarekornhigherorder}
Similar to Theorem \ref{theo:friedrichspoincarehigherorder}
and by Rellich's selection theorem
there exists $c>0$ such that for all 
$v\in\H{k+1}{\gat}(\om)\cap(\RM_{\gat})^{\bot_{\L{2}{}(\om)}}$
$$\norm{v}_{\H{k}{}(\om)}\leq c\norm{\symGrad v}_{\H{k}{\S}(\om)}.$$
As in Theorem \ref{theo:clrangebdinvela},
$(\RM_{\gat})^{\bot_{\L{2}{}(\om)}}$ can be replaced 
by $(\RM_{\gat})^{\bot_{\H{k}{\gat}(\om)}}$.
\end{rem}

\subsection{Regular Potentials and Decompositions II}
\label{sec:regpotdeco2}%

Let $k\geq0$.
According to Theorem \ref{theo:clrangebdinvela} the inverses
of the reduced operators  
\begin{align*}
(\symGradgatk)_{\bot}^{-1}:R(\symGradgatk)&\to D(\symGradgatk)=\H{k+1}{\gat}(\om),\\
(\RotRottSgatk)_{\bot}^{-1}:R(\RotRottSgatk)&\to D(\RotRottSgatk)=\H{k}{\S,\gat}(\RotRott,\om),\\
(\RotRottSgatkpok)_{\bot}^{-1}:R(\RotRottSgatk)&\to D(\RotRottSgatkpok)=\H{k+1,k}{\S,\gat}(\RotRott,\om),\\
(\DivSgatk)_{\bot}^{-1}:R(\DivSgatk)&\to D(\DivSgatk)=\H{k}{\S,\gat}(\Div,\om)
\end{align*}
are bounded and we recall the bounded linear regular decomposition operators
\begin{align*}
\PotQ_{\DivS,\gat}^{k,1}:
\H{k}{\S,\gat}(\Div,\om)&\to\H{k+1}{\S,\gat}(\om),
&
\PotQ_{\DivS,\gat}^{k,0}:
\H{k}{\S,\gat}(\Div,\om)&\to\H{k+2}{\S,\gat}(\om),\\
\PotQ_{\RotRottS,\gat}^{k,1}:
\H{k}{\S,\gat}(\RotRott,\om)&\to\H{k+2}{\S,\gat}(\om),
&
\PotQ_{\RotRottS,\gat}^{k,0}:
\H{k}{\S,\gat}(\RotRott,\om)&\to\H{k+1}{\gat}(\om),\\
\PotQ_{\RotRottS,\gat}^{k+1,k,1}:
\H{k+1,k}{\S,\gat}(\RotRott,\om)&\to\H{k+2}{\S,\gat}(\om),
&
\PotQ_{\RotRottS,\gat}^{k+1,k,0}:
\H{k+1,k}{\S,\gat}(\RotRott,\om)&\to\H{k+2}{\gat}(\om)
\end{align*}
from Lemma \ref{lem:highorderregdecoela}.
Similar to \cite[Theorem 4.18, Theorem 5.2]{PS2021a},
cf.~\cite[Lemma 2.22, Theorem 2.23]{PS2021a},
we obtain the following sequence of results:

\begin{theo}[bounded regular potentials from bounded regular decompositions]
\label{theo:regpothigherorder}
For $k\geq0$ there exist bounded linear regular potential operators
\begin{align*}
\PotP_{\symGrad,\gat}^{k}:=(\symGradgatk)_{\bot}^{-1}:
\H{k}{\S,\gat,0}(\RotRott,\om)\cap\Harm{}{\S,\gat,\gan,\eps}(\om)^{\bot_{\L{2}{\S,\eps}(\om)}}
&\to\H{k+1}{\gat}(\om),\\
\PotP_{\RotRottS,\gat}^{k}:=\PotQ_{\RotRottS,\gat}^{k,1}(\RotRottSgatk)_{\bot}^{-1}:
\H{k}{\S,\gat,0}(\Div,\om)\cap\Harm{}{\S,\gan,\gat,\eps}(\om)^{\bot_{\L{2}{\S}(\om)}}
&\to\H{k+2}{\S,\gat}(\om),\\
\PotP_{\RotRottS,\gat}^{k+1,k}:=\PotQ_{\RotRottS,\gat}^{k+1,k,1}(\RotRottSgatkpok)_{\bot}^{-1}:
\H{k}{\S,\gat,0}(\Div,\om)\cap\Harm{}{\S,\gan,\gat,\eps}(\om)^{\bot_{\L{2}{\S}(\om)}}
&\to\H{k+2}{\S,\gat}(\om),\\
\PotP_{\DivS,\gat}^{k}:=\PotQ_{\DivS,\gat}^{k,1}(\DivSgatk)_{\bot}^{-1}:
\H{k}{\gat}(\om)\cap(\RM_{\gan})^{\bot_{\L{2}{}(\om)}}
&\to\H{k+1}{\S,\gat}(\om),
\end{align*}
such that 
\begin{align*}
\symGrad\PotP_{\symGrad,\gat}^{k}
&=\id|_{\H{k}{\S,\gat,0}(\RotRott,\om)\cap\Harm{}{\S,\gat,\gan,\eps}(\om)^{\bot_{\L{2}{\S,\eps}(\om)}}},\\
\RotRott\PotP_{\RotRottS,\gat}^{k+1,k}
=\RotRott\PotP_{\RotRottS,\gat}^{k}
&=\id|_{\H{k}{\S,\gat,0}(\Div,\om)\cap\Harm{}{\S,\gan,\gat,\eps}(\om)^{\bot_{\L{2}{\S}(\om)}}},\\
\Div\PotP_{\DivS,\gat}^{k}
&=\id|_{\H{k}{\gat}(\om)\cap(\RM_{\gan})^{\bot_{\L{2}{}(\om)}}}.
\end{align*}
In particular, all potentials in Theorem \ref{theo:rangeselawithoutbdpot}
can be chosen such that they depend continuously on the data.
$\PotP_{\symGrad,\gat}^{k}$, $\PotP_{\RotRottS,\gat}^{k}$, $\PotP_{\RotRottS,\gat}^{k+1,k}$, and $\PotP_{\DivS,\gat}^{k}$
are right inverses of $\symGrad$, $\RotRott$, and $\Div$, respectively.
\end{theo}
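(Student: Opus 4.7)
The strategy is to assemble each potential operator by composing a bounded inverse of a reduced operator (to produce a preimage of low regularity) with the appropriate regular decomposition operator from Lemma \ref{lem:highorderregdecoela} (to upgrade the regularity), exploiting the fact that the decomposition operators are compatible with the differential operators in a way that preserves the right-inverse property.

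First, I would handle the simplest case, $\PotP_{\symGrad,\gat}^{k}$. By Theorem \ref{theo:rangeselawithoutbdpot} the domain $\H{k}{\S,\gat,0}(\RotRott,\om)\cap\Harm{}{\S,\gat,\gan,\eps}(\om)^{\bot_{\L{2}{\S,\eps}(\om)}}$ coincides exactly with $R(\symGradgatk)$, and Theorem \ref{theo:clrangebdinvela}(i) yields that $(\symGradgatk)_{\bot}^{-1}$ is a bounded operator into $D(\symGradgatk)=\H{k+1}{\gat}(\om)$. So one simply defines $\PotP_{\symGrad,\gat}^{k}:=(\symGradgatk)_{\bot}^{-1}$; by construction $\symGrad \PotP_{\symGrad,\gat}^{k}=\id$ on its domain, and no decomposition step is needed since the reduced inverse already maps into the target Sobolev space. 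The case of $\PotP_{\DivS,\gat}^{k}$ is analogous in spirit but requires one extra regularity bump: starting from $v\in\H{k}{\gat}(\om)\cap(\RM_{\gan})^{\bot_{\L{2}{}(\om)}}=R(\DivSgatk)$, apply $(\DivSgatk)_{\bot}^{-1}$ to obtain $T\in\H{k}{\S,\gat}(\Div,\om)$ with $\Div T=v$, then post-compose with $\PotQ_{\DivS,\gat}^{k,1}$ which is bounded into $\H{k+1}{\S,\gat}(\om)$. The crucial observation is that Lemma \ref{lem:highorderregdecoela} asserts $\Div\PotQ_{\DivS,\gat}^{k,1}=\bDivSgatk=\DivSgatk$, so $\Div\PotP_{\DivS,\gat}^{k}v=\Div T=v$, confirming the right-inverse property.

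Next I would treat $\PotP_{\RotRottS,\gat}^{k}$ and $\PotP_{\RotRottS,\gat}^{k+1,k}$ in the same manner. By Theorem \ref{theo:rangeselawithoutbdpot} the source space equals both $R(\RotRottSgatk)$ and $R(\RotRottSgatkpok)$. Hence applying either $(\RotRottSgatk)_{\bot}^{-1}$ or $(\RotRottSgatkpok)_{\bot}^{-1}$ (both bounded by Theorem \ref{theo:clrangebdinvela}(ii)) produces a preimage in $\H{k}{\S,\gat}(\RotRott,\om)$ respectively $\H{k+1,k}{\S,\gat}(\RotRott,\om)$, which then gets lifted to $\H{k+2}{\S,\gat}(\om)$ by $\PotQ_{\RotRottS,\gat}^{k,1}$ respectively $\PotQ_{\RotRottS,\gat}^{k+1,k,1}$. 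The identities $\RotRott\PotQ_{\RotRottS,\gat}^{k,1}=\RotRottSgatk$ and $\RotRott\PotQ_{\RotRottS,\gat}^{k+1,k,1}=\RotRottSgatkkmo$ (again from Lemma \ref{lem:highorderregdecoela}) guarantee the right-inverse identity in both cases. Boundedness is just boundedness of a composition of bounded operators.

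The only mildly subtle point—and thus the main obstacle, though it is still routine—is the bookkeeping to ensure that the intermediate preimages land in the correct domain of the decomposition operator and that the composition is well-defined. For instance, when applied to $T\in R(\RotRottSgatk)$, the element $(\RotRottSgatk)_{\bot}^{-1}T$ lies in $D(\RotRottSgatk)\cap N(\RotRottSgatk)^{\bot}$, in particular in $\H{k}{\S,\gat}(\RotRott,\om)$, so $\PotQ_{\RotRottS,\gat}^{k,1}$ can indeed be applied. The final claim about $\PotP_{\dots}^{\dots}$ being right inverses on the stated cohomology-orthogonal complements follows immediately from the four identities above, and the continuous dependence statement in Theorem \ref{theo:rangeselawithoutbdpot} follows \emph{a posteriori} from the boundedness of these constructed potentials.
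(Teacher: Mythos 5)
Your construction is correct and is precisely the one the paper intends: each potential operator is the composition of the bounded inverse of the corresponding reduced operator (Theorem \ref{theo:clrangebdinvela}, with domains identified via Theorem \ref{theo:rangeselawithoutbdpot}) with the bounded regular decomposition operator from Lemma \ref{lem:highorderregdecoela}, and the right-inverse identities follow from $\Div\PotQ_{\DivS,\gat}^{k,1}=\DivSgatk$ and its $\RotRott$ analogues exactly as you argue. Only a harmless index slip: the identity needed for $\PotP_{\RotRottS,\gat}^{k+1,k}$ is $\RotRott\PotQ_{\RotRottS,\gat}^{k+1,k,1}=\RotRottSgatkpok$, i.e.\ Lemma \ref{lem:highorderregdecoela} applied with $k$ replaced by $k+1$, not the $(k,k-1)$ version you wrote.
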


\begin{theo}[bounded regular decompositions from bounded regular potentials]
\label{theo:regdecohigherorder}
For $k\geq0$ the bounded regular decompositions
\begin{align*}
\H{k}{\S,\gat}(\Div,\om)
&=\H{k+1}{\S,\gat}(\om)
+\H{k}{\S,\gat,0}(\Div,\om)
=\H{k+1}{\S,\gat}(\om)
+\RotRott\H{k+2}{\S,\gat}(\om)\\
&=R(\widetilde\PotQ_{\DivS,\gat}^{k,1})
\dotplus\H{k}{\S,\gat,0}(\Div,\om)\\
&=R(\widetilde\PotQ_{\DivS,\gat}^{k,1})
\dotplus R(\widetilde\PotN_{\DivS,\gat}^{k}),\\
\H{k}{\S,\gat}(\RotRott,\om)
&=\H{k+2}{\S,\gat}(\om)
+\H{k}{\S,\gat,0}(\RotRott,\om)
=\H{k+2}{\S,\gat}(\om)
+\symGrad\H{k+1}{\gat}(\om)\\
&=R(\widetilde\PotQ_{\RotRottS,\gat}^{k,1})
\dotplus\H{k}{\S,\gat,0}(\RotRott,\om)\\
&=R(\widetilde\PotQ_{\RotRottS,\gat}^{k,1})
\dotplus R(\widetilde\PotN_{\RotRottS,\gat}^{k}),\\
\H{k+1,k}{\S,\gat}(\RotRott,\om)
&=\H{k+2}{\S,\gat}(\om)
+\H{k+1}{\S,\gat,0}(\RotRott,\om)
=\H{k+2}{\S,\gat}(\om)
+\symGrad\H{k+2}{\gat}(\om)\\
&=R(\widetilde\PotQ_{\RotRottS,\gat}^{k+1,k,1})
\dotplus\H{k+1}{\S,\gat,0}(\RotRott,\om)\\
&=R(\widetilde\PotQ_{\RotRottS,\gat}^{k+1,k,1})
\dotplus R(\widetilde\PotN_{\RotRottS,\gat}^{k+1,k})
\end{align*}
hold with bounded linear regular decomposition operators
\begin{align*}
\widetilde\PotQ_{\DivS,\gat}^{k,1}:=\PotP_{\DivS,\gat}^{k}\DivSgatk:
\H{k}{\S,\gat}(\Div,\om)&\to\H{k+1}{\S,\gat}(\om),\\
\widetilde\PotQ_{\RotRottS,\gat}^{k,1}:=\PotP_{\RotRottS,\gat}^{k}\RotRottSgatk:
\H{k}{\S,\gat}(\RotRott,\om)&\to\H{k+2}{\S,\gat}(\om),\\
\widetilde\PotQ_{\RotRottS,\gat}^{k+1,k,1}:=\PotP_{\RotRottS,\gat}^{k+1,k}\RotRottSgatkpok:
\H{k+1,k}{\S,\gat}(\RotRott,\om)&\to\H{k+2}{\S,\gat}(\om),\\
\widetilde\PotN_{\DivS,\gat}^{k}:
\H{k}{\S,\gat}(\Div,\om)&\to\H{k}{\S,\gat,0}(\Div,\om),\\
\widetilde\PotN_{\RotRottS,\gat}^{k}:
\H{k}{\S,\gat}(\RotRott,\om)&\to\H{k}{\S,\gat,0}(\RotRott,\om),\\
\widetilde\PotN_{\RotRottS,\gat}^{k+1,k}:
\H{k+1,k}{\S,\gat}(\RotRott,\om)&\to\H{k+1}{\S,\gat,0}(\RotRott,\om)
\end{align*}
satisfying
\begin{align*}
\id_{\H{k}{\S,\gat}(\Div,\om)}
&=\widetilde\PotQ_{\DivS,\gat}^{k,1}
+\widetilde\PotN_{\DivS,\gat}^{k},\\
\id_{\H{k}{\S,\gat}(\RotRott,\om)}
&=\widetilde\PotQ_{\RotRottS,\gat}^{k,1}
+\widetilde\PotN_{\RotRottS,\gat}^{k},\\
\id_{\H{k+1,k}{\S,\gat}(\RotRott,\om)}
&=\widetilde\PotQ_{\RotRottS,\gat}^{k+1,k,1}
+\widetilde\PotN_{\RotRottS,\gat}^{k+1,k}.
\end{align*}
\end{theo}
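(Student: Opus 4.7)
The plan is to obtain each of the three decompositions by one simple two-step recipe: define $\widetilde\PotQ$ as the composition of the bounded regular potential operator from Theorem~\ref{theo:regpothigherorder} with the associated differential operator, and set $\widetilde\PotN:=\id-\widetilde\PotQ$. Boundedness is then inherited from Theorem~\ref{theo:regpothigherorder} together with boundedness of the differential operator on the graph space, and the identity $\widetilde\PotQ+\widetilde\PotN=\id$ holds by construction. The two non-trivial checks are (a) that the input to $\PotP$ lies in its domain, and (b) that the sum is direct. The ambient non-direct representations $\H{k+1}{\S,\gat}(\om)+\RotRott\H{k+2}{\S,\gat}(\om)$ etc.\ are already supplied by Lemma~\ref{lem:highorderregdecoela} together with Corollary~\ref{cor:weakstrongela}, and the variants with $\H{k}{\S,\gat,0}(\Div,\om)$ or $\H{k}{\S,\gat,0}(\RotRott,\om)$ in place of the potential-generated pieces follow from the complex property.

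For the $\Div$-case, given $T\in\H{k}{\S,\gat}(\Div,\om)$, Theorem~\ref{theo:rangeselawithoutbdpot} gives $\Div T\in R(\DivSgatk)=\H{k}{\gat}(\om)\cap(\RM_{\gan})^{\bot_{\L{2}{}(\om)}}$, which is precisely the domain of $\PotP_{\DivS,\gat}^{k}$. Hence $\widetilde\PotQ_{\DivS,\gat}^{k,1}T:=\PotP_{\DivS,\gat}^{k}\Div T\in\H{k+1}{\S,\gat}(\om)$ depends continuously on $T$, and the right-inverse identity $\Div\PotP_{\DivS,\gat}^{k}=\id$ forces $\Div\big(\widetilde\PotQ_{\DivS,\gat}^{k,1}T\big)=\Div T$, placing the residual $\widetilde\PotN_{\DivS,\gat}^{k}T$ in $\H{k}{\S,\gat,0}(\Div,\om)$. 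Directness is immediate: if $T=\PotP_{\DivS,\gat}^{k}\Div S$ lies additionally in $\H{k}{\S,\gat,0}(\Div,\om)$, then $\Div S=\Div T=0$, whence $T=\PotP_{\DivS,\gat}^{k}(0)=0$ by linearity.

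The two $\RotRott$-cases proceed along the same template, with the domain verification replaced by Theorem~\ref{theo:rangeselawithoutbdpot}'s identity $R(\RotRottSgatk)=\H{k}{\S,\gat,0}(\Div,\om)\cap\Harm{}{\S,\gan,\gat,\eps}(\om)^{\bot_{\L{2}{\S}(\om)}}$, which is exactly the common domain of $\PotP_{\RotRottS,\gat}^{k}$ and $\PotP_{\RotRottS,\gat}^{k+1,k}$. The right-inverse identities $\RotRott\PotP_{\RotRottS,\gat}^{k}=\id$ and $\RotRott\PotP_{\RotRottS,\gat}^{k+1,k}=\id$ then close the argument exactly as above. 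I do not anticipate any real obstacle here: all the substantive analysis -- closed ranges, cohomological range characterisations, and construction of the bounded right inverses -- has already been carried out in Theorem~\ref{theo:rangeselawithoutbdpot} and Theorem~\ref{theo:regpothigherorder}, so the present statement is essentially a bookkeeping consequence mirroring the scheme of \cite[Theorem~5.2]{PS2021a}.
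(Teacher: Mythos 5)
Your proposal is correct and follows essentially the same route the paper takes (the paper only sketches this step by appeal to the general scheme of \cite[Lemma 2.22, Theorem 2.23]{PS2021a}): set $\widetilde\PotQ:=\PotP\circ\A$ with $\PotP$ from Theorem \ref{theo:regpothigherorder}, use the range identifications of Theorem \ref{theo:rangeselawithoutbdpot} to verify the domain of $\PotP$, put $\widetilde\PotN:=\id-\widetilde\PotQ$, and deduce directness from the right-inverse identity. No gaps.
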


\begin{cor}[bounded regular kernel decompositions]
\label{cor:regdecokernelhigherorder}
For $k\geq0$ the bounded regular kernel decompositions
\begin{align*}
\H{k}{\S,\gat,0}(\Div,\om)
&=\H{k+1}{\S,\gat,0}(\Div,\om)
+\RotRott\H{k+2}{\S,\gat}(\om),\\
\H{k}{\S,\gat,0}(\RotRott,\om)
&=\H{k+2}{\S,\gat,0}(\RotRott,\om)
+\symGrad\H{k+1}{\gat}(\om)
\end{align*}
hold.
\end{cor}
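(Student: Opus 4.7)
The plan is to derive the kernel decompositions by restricting the bounded regular decompositions already established in Lemma \ref{lem:highorderregdecoela} (equivalently Theorem \ref{theo:regdecohigherorder}) to the respective kernels, and to exploit the complex properties $\Div\RotRott\subset0$ and $\RotRott\symGrad\subset0$. The bounded decomposition operators for the corollary will simply be the restrictions of $\PotQ_{\DivS,\gat}^{k,1},\PotQ_{\DivS,\gat}^{k,0}$ and $\PotQ_{\RotRottS,\gat}^{k,1},\PotQ_{\RotRottS,\gat}^{k,0}$ to the closed subspaces $\H{k}{\S,\gat,0}(\Div,\om)$ and $\H{k}{\S,\gat,0}(\RotRott,\om)$, respectively.

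For the first identity, pick $T\in\H{k}{\S,\gat,0}(\Div,\om)$ and apply Lemma \ref{lem:highorderregdecoela} to write
\[
T=T_{1}+\RotRott S,\qquad T_{1}:=\PotQ_{\DivS,\gat}^{k,1}T\in\H{k+1}{\S,\gat}(\om),\quad S:=\PotQ_{\DivS,\gat}^{k,0}T\in\H{k+2}{\S,\gat}(\om).
\]
Taking $\Div$ of this identity and using $\Div T=0$ together with $\Div\RotRott S=0$ (complex property) gives $\Div T_{1}=0$. Since $k+1\geq1$, definition \eqref{defsobolevkgeqone} combined with $T_{1}\in\H{k+1}{\S,\gat}(\om)$ and $\Div T_{1}=0\in\H{k+1}{\gat}(\om)$ yields $T_{1}\in\H{k+1}{\S,\gat}(\Div,\om)$, so in fact $T_{1}\in\H{k+1}{\S,\gat,0}(\Div,\om)$. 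Boundedness of $T\mapsto T_{1}$ and $T\mapsto S$ on the kernel subspace is inherited from Lemma \ref{lem:highorderregdecoela}.

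The argument for the $\RotRott$-kernel is entirely analogous. Given $S\in\H{k}{\S,\gat,0}(\RotRott,\om)$, use the bounded regular decomposition
\[
S=S_{1}+\symGrad v,\qquad S_{1}:=\PotQ_{\RotRottS,\gat}^{k,1}S\in\H{k+2}{\S,\gat}(\om),\quad v:=\PotQ_{\RotRottS,\gat}^{k,0}S\in\H{k+1}{\gat}(\om),
\]
apply $\RotRott$, and invoke $\RotRott\symGrad=0$ to conclude $\RotRott S_{1}=0$. Since $k+2\geq2$, definition \eqref{defsobolevkgeqtwo} (no additional intersection is needed in this range) places $S_{1}$ in $\H{k+2}{\S,\gat}(\RotRott,\om)$, hence in $\H{k+2}{\S,\gat,0}(\RotRott,\om)$.

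I do not anticipate any genuine obstacle: the heavy lifting has already been done in Lemma \ref{lem:highorderregdecoela}, so all that remains is the bookkeeping of regularity and boundary conditions. The only point requiring a bit of care is ensuring that the shift indices $k+1\geq1$ and $k+2\geq2$ match exactly the thresholds in \eqref{defsobolevkgeqone} and \eqref{defsobolevkgeqtwo} so that the first summands lie in the correct strong-boundary Sobolev spaces without an extra intersection with the zero-order $\Div$- or $\RotRott$-space; this is automatic thanks to Corollary \ref{cor:weakstrongela}.
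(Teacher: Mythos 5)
Your proposal is correct and follows essentially the same route the paper intends: restrict the bounded regular decompositions of Lemma \ref{lem:highorderregdecoela} to the kernels, use $\Div\RotRott=0$ resp.\ $\RotRott\symGrad=0$ to see that the regular summand is again in the kernel, and use \eqref{defsobolevkgeqone} resp.\ \eqref{defsobolevkgeqtwo} (with Corollary \ref{cor:weakstrongela}) to place it in $\H{k+1}{\S,\gat,0}(\Div,\om)$ resp.\ $\H{k+2}{\S,\gat,0}(\RotRott\!\!,\om)$; boundedness is inherited. The reverse inclusions, which you leave implicit, are immediate from the complex property, so nothing is missing.
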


\begin{rem}[bounded regular decompositions from bounded regular potentials]
\label{rem:regdecohigherorder}
It holds 
\begin{align*}
\Div\widetilde\PotQ_{\DivS,\gat}^{k,1}
=\Div\PotQ_{\DivS,\gat}^{k,1}
&=\DivSgatk
\intertext{and hence $\H{k}{\S,\gat,0}(\Div,\om)$ is invariant under 
$\PotQ_{\DivS,\gat}^{k,1}$ and $\widetilde\PotQ_{\DivS,\gat}^{k,1}$.
Analogously,}
\RotRott\widetilde\PotQ_{\RotRottS,\gat}^{k,1}
=\RotRott\PotQ_{\RotRottS,\gat}^{k,1}
&=\RotRottSgatk,\\
\RotRott\widetilde\PotQ_{\RotRottS,\gat}^{k+1,k,1}
=\RotRott\PotQ_{\RotRottS,\gat}^{k+1,k,1}
&=\RotRottSgatkpok.
\end{align*}
Thus $\H{k}{\S,\gat,0}(\RotRott,\om)$ and $\H{k+1}{\S,\gat,0}(\RotRott,\om)$ 
are invariant under $\PotQ_{\RotRottS,\gat}^{k,1}$
and $\PotQ_{\RotRottS,\gat}^{k+1,k,1}$, respectively.
Moreover,
\begin{align*}
R(\widetilde\PotQ_{\DivS,\gat}^{k,1})
&=R(\PotP_{\DivS,\gat}^{k}),
&
\widetilde\PotQ_{\DivS,\gat}^{k,1}
&=\PotQ_{\DivS,\gat}^{k,1}(\DivSgatk)_{\bot}^{-1}\DivSgatk.
\intertext{Therefore, we have
$\widetilde\PotQ_{\DivS,\gat}^{k,1}|_{D((\DivSgatk)_{\bot})}=\PotQ_{\DivS,\gat}^{k,1}|_{D((\DivSgatk)_{\bot})}$
and thus $\widetilde\PotQ_{\DivS,\gat}^{k,1}$ may differ from $\PotQ_{\DivS,\gat}^{k,1}$
only on $N(\DivSgatk)=\H{k}{\S,\gat,0}(\Div,\om)$.
Analogously,}
R(\widetilde\PotQ_{\RotRottS,\gat}^{k,1})
&=R(\PotP_{\RotRottS,\gat}^{k}),
&
\widetilde\PotQ_{\RotRottS,\gat}^{k,1}
&=\PotQ_{\RotRottS,\gat}^{k,1}(\RotRottSgatk)_{\bot}^{-1}\RotRottSgatk,\\
R(\widetilde\PotQ_{\RotRottS,\gat}^{k+1,k,1})
&=R(\PotP_{\RotRottS,\gat}^{k+1,k}),
&
\widetilde\PotQ_{\RotRottS,\gat}^{k+1,k,1}
&=\PotQ_{\RotRottS,\gat}^{k+1,k,1}(\RotRottSgatkpok)_{\bot}^{-1}\RotRottSgatkpok.
\end{align*}
Hence
\begin{align*}
\widetilde\PotQ_{\RotRottS,\gat}^{k,1}|_{D((\RotRottSgatk)_{\bot})}
&=\PotQ_{\RotRottS,\gat}^{k,1}|_{D((\RotRottSgatk)_{\bot})},\\
\widetilde\PotQ_{\RotRottS,\gat}^{k+1,k,1}|_{D((\RotRottSgatkpok)_{\bot})}
&=\PotQ_{\RotRottS,\gat}^{k+1,k,1}|_{D((\RotRottSgatkpok)_{\bot})}
\end{align*}
and thus $\widetilde\PotQ_{\RotRottS,\gat}^{k,1}$
and $\widetilde\PotQ_{\RotRottS,\gat}^{k+1,k,1}$ may differ from 
$\PotQ_{\RotRottS,\gat}^{k,1}$ and $\PotQ_{\RotRottS,\gat}^{k+1,k,1}$
only on the kernels $N(\RotRottSgatk)=\H{k}{\S,\gat,0}(\RotRott,\om)$
and $N(\RotRottSgatkpok)=\H{k+1}{\S,\gat,0}(\RotRott,\om)$, respectively.
\end{rem}

\begin{rem}[projections]
\label{rem:regdecohigherorderproj}
Recall Theorem \ref{theo:regdecohigherorder}, e.g.,
for $\RotRottSgatk$
$$\H{k}{\S,\gat}(\RotRott,\om)
=R(\widetilde\PotQ_{\RotRottS,\gat}^{k,1})
\dotplus R(\widetilde\PotN_{\RotRottS,\gat}^{k}).$$
\begin{itemize}
\item[\bf(i)]
$\widetilde\PotQ_{\RotRottS,\gat}^{k,1}$,
$\widetilde\PotN_{\RotRottS,\gat}^{k}=1-\widetilde\PotQ_{\RotRottS,\gat}^{k,1}$
are projections.
\item[\bf(i')] 
$\widetilde\PotQ_{\RotRottS,\gat}^{k,1}\widetilde\PotN_{\RotRottS,\gat}^{k}
=\widetilde\PotN_{\RotRottS,\gat}^{k}\widetilde\PotQ_{\RotRottS,\gat}^{k,1}=0$.
\item[\bf(ii)]
For $I_{\pm}:=\widetilde\PotQ_{\RotRottS,\gat}^{k,1}\pm\widetilde\PotN_{\RotRottS,\gat}^{k}$
it holds $I_{+}=I_{-}^{2}=\id_{\H{k}{\S,\gat}(\RotRott,\om)}$.
Therefore, $I_{+}$, $I_{-}^{2}$, as well as
$I_{-}=2\widetilde\PotQ_{\RotRottS,\gat}^{k,1}-\id_{\H{k}{\S,\gat}(\RotRott,\om)}$
are topological isomorphisms on $\H{k}{\S,\gat}(\RotRott,\om)$.
\item[\bf(iii)]
There exists $c>0$ such that for all $S\in\H{k}{\S,\gat}(\RotRott,\om)$
\begin{align*}
c\norm{\widetilde\PotQ_{\RotRottS,\gat}^{k,1}S}_{\H{k+2}{\S}(\om)}
&\leq\norm{\RotRott S}_{\H{k}{\S}(\om)}
\leq\norm{S}_{\H{k}{\S}(\RotRott,\om)},\\
\norm{\widetilde\PotN_{\RotRottS,\gat}^{k}S}_{\H{k}{\S}(\om)}
&\leq\norm{S}_{\H{k}{\S}(\om)}
+\norm{\widetilde\PotQ_{\RotRottS,\gat}^{k,1}S}_{\H{k}{\S}(\om)}.
\end{align*}
\item[\bf(iii')]
For $S\in\H{k}{\S,\gat,0}(\RotRott,\om)$ we have $\widetilde\PotQ_{\RotRottS,\gat}^{k,1}S=0$
and $\widetilde\PotN_{\RotRottS,\gat}^{k}S=S$.
In particular, $\widetilde\PotN_{\RotRottS,\gat}^{k}$ is onto.
\end{itemize}
Similar results to (i)-(iii') hold for $\DivSgatk$ and $\RotRottSgatkpok$ as well.
In particular, $\widetilde\PotQ_{\DivS,\gat}^{k,1}$,
$\widetilde\PotN_{\DivS,\gat}^{k}$, and
$\widetilde\PotQ_{\RotRottS,\gat}^{k+1,k,1}$,
$\widetilde\PotN_{\RotRottS,\gat}^{k+1,k}$
are projections and there exists $c>0$ such that for all 
$T\in\H{k}{\S,\gat}(\Div,\om)$ and all $S\in\H{k+1,k}{\S,\gat}(\RotRott,\om)$
\begin{align*}
\norm{\widetilde\PotQ_{\DivS,\gat}^{k,1}T}_{\H{k+1}{\S}(\om)}
&\leq c\norm{\Div T}_{\H{k}{}(\om)},
&
\norm{\widetilde\PotQ_{\RotRottS,\gat}^{k+1,k,1}S}_{\H{k+2}{\S}(\om)}
&\leq c\norm{\RotRott S}_{\H{k}{\S}(\om)}.
\end{align*}
\end{rem}

Corollary \ref{cor:regdecokernelhigherorder} shows:

\begin{cor}[bounded regular higher order kernel decompositions]
\label{cor:regdecokernelhigherorder2}
For $k,\ell\geq0$ the bounded regular kernel decompositions
\begin{align*}
N(\DivSgatk)
=\H{k}{\S,\gat,0}(\Div,\om)
&=\H{\ell}{\S,\gat,0}(\Div,\om)
+\RotRott\H{k+2}{\S,\gat}(\om),\\
N(\RotRottSgatk)
=\H{k}{\S,\gat,0}(\RotRott,\om)
&=\H{\ell}{\S,\gat,0}(\RotRott,\om)
+\symGrad\H{k+1}{\gat}(\om)
\end{align*}
hold. In particular, for $k=0$ and all $\ell\geq0$
\begin{align*}
N(\DivSgat)
=\H{}{\S,\gat,0}(\Div,\om)
&=\H{\ell}{\S,\gat,0}(\Div,\om)
+\RotRott\H{2}{\S,\gat}(\om),\\
N(\RotRottSgat)
=\H{}{\S,\gat,0}(\RotRott,\om)
&=\H{\ell}{\S,\gat,0}(\RotRott,\om)
+\symGrad\H{1}{\gat}(\om).
\end{align*}
\end{cor}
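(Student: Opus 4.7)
My plan is to derive both identities by finitely iterating Corollary~\ref{cor:regdecokernelhigherorder}, treating the two regimes $\ell\leq k$ and $\ell>k$ separately. This is the natural route because Corollary~\ref{cor:regdecokernelhigherorder} is essentially the $\ell=k+1$ (resp.\ $\ell=k+2$) version of the claim, so the general statement should follow by induction on $\ell$.

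If $\ell\leq k$, the bounded inclusions $\H{k}{\S,\gat,0}(\Div,\om)\subset\H{\ell}{\S,\gat,0}(\Div,\om)$ and $\H{k}{\S,\gat,0}(\RotRott,\om)\subset\H{\ell}{\S,\gat,0}(\RotRott,\om)$ are trivial, and the decompositions $T=T+\RotRott\,0$ and $S=S+\symGrad\,0$ already satisfy the assertion with continuous decomposition maps. The actual content lies in the case $\ell>k$, which I would handle as follows. Starting from $T\in\H{k}{\S,\gat,0}(\Div,\om)$, Corollary~\ref{cor:regdecokernelhigherorder} gives $T=T_{k+1}+\RotRott S_{k+2}$ with $T_{k+1}\in\H{k+1}{\S,\gat,0}(\Div,\om)$ and $S_{k+2}\in\H{k+2}{\S,\gat}(\om)$, depending continuously on $T$. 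Applying the same corollary to $T_{k+1}$ produces $T_{k+1}=T_{k+2}+\RotRott S_{k+3}$ with $S_{k+3}\in\H{k+3}{\S,\gat}(\om)$. I would then absorb $S_{k+3}$ into the $\RotRott$-term using the bounded embedding $\H{k+3}{\S,\gat}(\om)\subset\H{k+2}{\S,\gat}(\om)$, so that the potential stays in $\H{k+2}{\S,\gat}(\om)$ while $T_{k+2}\in\H{k+2}{\S,\gat,0}(\Div,\om)$. After $\ell-k$ such iterations the kernel part lives in $\H{\ell}{\S,\gat,0}(\Div,\om)$ and the potential still in $\H{k+2}{\S,\gat}(\om)$, which is the desired decomposition.

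For the $\RotRott$-identity the same strategy applies verbatim, with the only twist that each iteration of Corollary~\ref{cor:regdecokernelhigherorder} gains \emph{two} orders of regularity in the kernel component rather than one. Hence I would iterate $\lceil(\ell-k)/2\rceil$ times, landing in some $\H{\ell'}{\S,\gat,0}(\RotRott,\om)$ with $\ell'\geq\ell$, and then invoke $\H{\ell'}{\S,\gat,0}(\RotRott,\om)\subset\H{\ell}{\S,\gat,0}(\RotRott,\om)$. The $\symGrad$-potentials collected along the way are absorbed into a single summand in $\symGrad\H{k+1}{\gat}(\om)$ via the inclusions $\H{k+j+1}{\gat}(\om)\subset\H{k+1}{\gat}(\om)$. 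The particular cases $k=0$ fall out by specialisation.

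I do not expect a genuine obstacle; this is essentially bookkeeping. The only delicate point is to ensure that, after iteration, the residual potentials can be consistently summed into one element of the fixed space $\RotRott\H{k+2}{\S,\gat}(\om)$ (resp.\ $\symGrad\H{k+1}{\gat}(\om)$) and that the resulting kernel decomposition operator remains bounded. Both follow because every step of the iteration is a composition of the bounded decomposition operators from Corollary~\ref{cor:regdecokernelhigherorder} with bounded Sobolev inclusions, and only finitely many such compositions are performed.
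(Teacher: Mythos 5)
Your proof takes exactly the paper's route: the corollary is obtained by finitely iterating Corollary~\ref{cor:regdecokernelhigherorder} and absorbing the accumulated potentials into the fixed spaces $\RotRott\H{k+2}{\S,\gat}(\om)$ and $\symGrad\H{k+1}{\gat}(\om)$ via the bounded Sobolev inclusions, and your bookkeeping for $\ell>k$ (one order gained per step for $\Div$, two per step for $\RotRott$) is correct. One caveat, which you share with the statement as printed: for $\ell<k$ your decomposition $T=T+\RotRott 0$ only yields the inclusion ``$\subset$'', and the reverse inclusion genuinely fails there (the right-hand side contains fields of only $\H{\ell}{}$-regularity, whereas the left-hand side lies in $\H{k}{\S,\gat}(\om)$), so the asserted equality is meaningful only for $\ell\geq k$.
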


\subsection{Dirichlet/Neumann Fields}
\label{sec:dirneuB}%

From Theorem \ref{theo:miniFATzeroorder} (iv) we recall the 
orthonormal Helmholtz type decompositions (for $\mu=1$)
\begin{align}
\begin{aligned}
\label{helmcoho1}
\L{2}{\S,\eps}(\om)
&=R(\symGradgat)
\oplus_{\L{2}{\S,\eps}(\om)}N(\DivSgan\eps)\\
&=N(\RotRottSgat)
\oplus_{\L{2}{\S,\eps}(\om)}R(\eps^{-1}\RotRottSgan)\\
&=R(\symGradgat)
\oplus_{\L{2}{\S,\eps}(\om)}\Harm{}{\S,\gat,\gan,\eps}(\om)
\oplus_{\L{2}{\S,\eps}(\om)}R(\eps^{-1}\RotRottSgan),\\
N(\RotRottSgat)
&=R(\symGradgat)
\oplus_{\L{2}{\S,\eps}(\om)}\Harm{}{\S,\gat,\gan,\eps}(\om),\\
N(\DivSgan\eps)
&=\Harm{}{\S,\gat,\gan,\eps}(\om)
\oplus_{\L{2}{\S,\eps}(\om)}R(\eps^{-1}\RotRottSgan).
\end{aligned}
\end{align}
Let us denote the $\L{2}{\S,\eps}(\om)$-orthonormal projector onto 
$N(\DivSgan\eps)$ and $N(\RotRottSgat)$ by
$$\pi_{\Div}:\L{2}{\S,\eps}(\om)\to N(\DivSgan\eps),\qquad
\pi_{\RotRott}:\L{2}{\S,\eps}(\om)\to N(\RotRottSgat),$$
respectively. Then
\begin{align*}
\pi_{\Div}|_{N(\RotRottSgat)}:N(\RotRottSgat)&\to\Harm{}{\S,\gat,\gan,\eps}(\om),\\
\pi_{\RotRott}|_{N(\DivSgan\eps)}:N(\DivSgan\eps)&\to\Harm{}{\S,\gat,\gan,\eps}(\om)
\end{align*}
are onto. Moreover, 
\begin{align*}
\pi_{\Div}|_{R(\symGradgat)}&=0,
&
\pi_{\RotRott}|_{R(\eps^{-1}\RotRottSgan)}&=0,\\
\pi_{\Div}|_{\Harm{}{\S,\gat,\gan,\eps}(\om)}&=\id_{\Harm{}{\S,\gat,\gan,\eps}(\om)},
&
\pi_{\RotRott}|_{\Harm{}{\S,\gat,\gan,\eps}(\om)}&=\id_{\Harm{}{\S,\gat,\gan,\eps}(\om)}.
\end{align*}
Therefore, by Corollary \ref{cor:regdecokernelhigherorder2} and for all $\ell\geq0$
\begin{align*}
\Harm{}{\S,\gat,\gan,\eps}(\om)
&=\pi_{\Div}N(\RotRottSgat)
=\pi_{\Div}\H{\ell}{\S,\gat,0}(\RotRott,\om),\\
\Harm{}{\S,\gat,\gan,\eps}(\om)
&=\pi_{\RotRott}N(\DivSgan\eps)
=\pi_{\RotRott}\eps^{-1}\H{\ell}{\S,\gan,0}(\Div,\om),
\end{align*}
where we have used $N(\DivSgan\eps)=\eps^{-1}\H{}{\S,\gan,0}(\Div,\om)$.
Hence with
$$\H{\infty}{\S,\gat,0}(\RotRott,\om):=\bigcap_{k\geq0}\H{k}{\S,\gat,0}(\RotRott,\om),\qquad
\H{\infty}{\S,\gan,0}(\Div,\om):=\bigcap_{k\geq0}\H{k}{\S,\gan,0}(\Div,\om)$$
we have the following result:

\begin{theo}[smooth pre-bases of Dirichlet/Neumann fields]
\label{theo:cohomologyinfty}
Let $d_{\om,\gat}:=\dim\Harm{}{\S,\gat,\gan,\eps}(\om)$. Then 
$$\pi_{\Div}\H{\infty}{\S,\gat,0}(\RotRott,\om)
=\Harm{}{\S,\gat,\gan,\eps}(\om)
=\pi_{\RotRott}\eps^{-1}\H{\infty}{\S,\gan,0}(\Div,\om).$$
Moreover, there exists a smooth $\RotRott$-\emph{pre-basis}
and a smooth $\Div$-\emph{pre-basis} of $\Harm{}{\S,\gat,\gan,\eps}(\om)$, i.e.,
there are linear independent smooth fields 
\begin{align*}
\B{\RotRott}{\S,\gat}(\om)
&:=\{B_{\S,\gat,\ell}^{\RotRott}\}_{\ell=1}^{d_{\om,\gat}}
\subset\H{\infty}{\S,\gat,0}(\RotRott,\om),\\
\B{\Div}{\S,\gan}(\om)
&:=\{B_{\S,\gan,\ell}^{\Div}\}_{\ell=1}^{d_{\om,\gat}}
\subset\H{\infty}{\S,\gan,0}(\Div,\om),
\end{align*}
such that $\pi_{\Div}\B{\RotRott}{\S,\gat}(\om)$ and $\pi_{\RotRott}\eps^{-1}\B{\Div}{\S,\gan}(\om)$
are both bases of $\Harm{}{\S,\gat,\gan,\eps}(\om)$. In particular,
$$\Lin\pi_{\Div}\B{\RotRott}{\S,\gat}(\om)
=\Harm{}{\S,\gat,\gan,\eps}(\om)
=\Lin\pi_{\RotRott}\eps^{-1}\B{\Div}{\S,\gan}(\om).$$
\end{theo}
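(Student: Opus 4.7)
The plan is to first prove the central identity
\begin{align*}
\pi_{\Div}\H{\infty}{\S,\gat,0}(\RotRott,\om)
&= \Harm{}{\S,\gat,\gan,\eps}(\om)
= \pi_{\RotRott}\eps^{-1}\H{\infty}{\S,\gan,0}(\Div,\om),
\end{align*}
and then to extract smooth pre-bases by lifting an arbitrary basis of the finite-dimensional target $\Harm{}{\S,\gat,\gan,\eps}(\om)$ through these surjections. I focus on the first equality; the second follows by the entirely symmetric argument using $\eps^{-1}\H{k}{\S,\gan,0}(\Div,\om)$ and the Helmholtz decomposition \eqref{helmcoho1}.

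The inclusion $\pi_{\Div}\H{\infty}{\S,\gat,0}(\RotRott,\om) \subset \Harm{}{\S,\gat,\gan,\eps}(\om)$ is immediate since $\H{\infty}{\S,\gat,0}(\RotRott,\om) \subset N(\RotRottSgat)$ and the preamble already established $\pi_{\Div}N(\RotRottSgat) = \Harm{}{\S,\gat,\gan,\eps}(\om)$. For the converse, fix $h \in \Harm{}{\S,\gat,\gan,\eps}(\om)$. For each $k \geq 1$, the bounded regular kernel decomposition $\H{}{\S,\gat,0}(\RotRott,\om) = \H{k}{\S,\gat,0}(\RotRott,\om) + \symGrad\H{1}{\gat}(\om)$ from Corollary \ref{cor:regdecokernelhigherorder2} yields $h = h^{(k)} + \symGrad v^{(k)}$ with $h^{(k)} \in \H{k}{\S,\gat,0}(\RotRott,\om)$; since $\pi_{\Div}$ annihilates $R(\symGradgat)$ by \eqref{helmcoho1}, the identity $\pi_{\Div} h^{(k)} = h$ persists. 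Hence each restriction of $\pi_{\Div}$ to $\H{k}{\S,\gat,0}(\RotRott,\om)$ is surjective onto the finite-dimensional space $\Harm{}{\S,\gat,\gan,\eps}(\om)$, with kernel $\H{k}{\S,\gat,0}(\RotRott,\om) \cap R(\symGradgat)$.

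The main obstacle is that the image of an intersection under a continuous linear map is in general strictly smaller than the intersection of the images, so the representatives $\{h^{(k)}\}$ obtained above need not assemble into a single element of $\H{\infty}{\S,\gat,0}(\RotRott,\om)$. I would overcome this by exploiting the finite dimensionality $d_{\om,\gat}$ of $\Harm{}{\S,\gat,\gan,\eps}(\om)$ together with the bounded regular decomposition operators of Theorem \ref{theo:regdecohigherorder}: for each fixed basis $h_1,\dots,h_{d_{\om,\gat}}$ of $\Harm{}{\S,\gat,\gan,\eps}(\om)$, the set of lifts at level $k$ is an affine coset of $\H{k}{\S,\gat,0}(\RotRott,\om) \cap R(\symGradgat)$, and compatible choices between different levels $k$ differ only by $\symGrad$-corrections whose regularity is controlled by Remark \ref{rem:regdecohigherorder} (in particular by the invariance $\RotRott\widetilde\PotQ_{\RotRottS,\gat}^{k,1} = \RotRottSgatk$ of the kernel subspaces under the decomposition projections). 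A nested-selection / diagonal argument then produces, for each $h_\ell$, a single element $B^{\RotRott}_{\S,\gat,\ell}$ that belongs to every $\H{k}{\S,\gat,0}(\RotRott,\om)$ and satisfies $\pi_{\Div}B^{\RotRott}_{\S,\gat,\ell} = h_\ell$.

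Once the identity is secured, the smooth $\RotRott$-pre-basis $\B{\RotRott}{\S,\gat}(\om) = \{B^{\RotRott}_{\S,\gat,\ell}\}_{\ell=1}^{d_{\om,\gat}}$ is exactly the output of the construction above; linear independence of the $B^{\RotRott}_{\S,\gat,\ell}$ follows automatically from linear independence of their $\pi_{\Div}$-images, and the spanning statement $\Lin\pi_{\Div}\B{\RotRott}{\S,\gat}(\om) = \Harm{}{\S,\gat,\gan,\eps}(\om)$ is then immediate. The smooth $\Div$-pre-basis $\B{\Div}{\S,\gan}(\om)$ is built by the entirely symmetric procedure applied to $\eps^{-1}\H{\infty}{\S,\gan,0}(\Div,\om)$ and the projector $\pi_{\RotRott}$.
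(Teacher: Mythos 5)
Your reduction of the theorem to the single claim $\Harm{}{\S,\gat,\gan,\eps}(\om)\subset\pi_{\Div}\H{\infty}{\S,\gat,0}(\RotRott,\om)$ is correct, and you have rightly isolated the crux: for every finite $k$ one gets $\pi_{\Div}\H{k}{\S,\gat,0}(\RotRott,\om)=\Harm{}{\S,\gat,\gan,\eps}(\om)$ from Corollary \ref{cor:regdecokernelhigherorder2} together with $\pi_{\Div}|_{R(\symGradgat)}=0$, but the image of an intersection need not be the intersection of the images. The gap is that your proposed resolution --- a ``nested-selection / diagonal argument'' on the affine fibers $L_j^{k}=\{S\in\H{k}{\S,\gat,0}(\RotRott,\om):\pi_{\Div}S=h_j\}$ --- is not an argument. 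These fibers are nested nonempty cosets of the infinite-dimensional spaces $\symGrad\H{k+1}{\gat}(\om)$, and a decreasing chain of nonempty affine subspaces of an infinite-dimensional space can have empty intersection: in the space of finitely supported sequences $x=(x_0,x_1,\dots)$ with $\pi(x)=x_0$, the nested subspaces $A_k=\{x:x_1=\dots=x_k=0,\ x_0=\sum_{j>k}x_j\}$ satisfy $\pi(A_k)=\reals$ for every $k$ while $\pi\big(\bigcap_kA_k\big)=\{0\}$. Finite dimensionality of $\Harm{}{\S,\gat,\gan,\eps}(\om)$ does not help, since the obstruction lives in the fibers, not in the target, and the invariance $\RotRott\widetilde\PotQ_{\RotRottS,\gat}^{k,1}=\RotRottSgatk$ you invoke only says where the decomposition operators send the kernel; it does not make the successive lifts stabilize.

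What actually closes the gap is a Mittag--Leffler type correction using the density built into the definition of $\H{j}{\gat}(\om)$. Iterating Corollary \ref{cor:regdecokernelhigherorder2} gives $h=h_N+\symGrad\sum_{j<N}w_j$ with $h_N\in\H{N}{\S,\gat,0}(\RotRott,\om)$ and $w_j\in\H{j+1}{\gat}(\om)$. Choose $u_j\in\C{\infty}{\gat}(\om)$ with $\norm{w_j-u_j}_{\H{j+1}{}(\om)}\leq2^{-j}$ and set $S:=h-\symGrad\sum_{j\geq0}(w_j-u_j)$; the series converges in $\H{1}{\gat}(\om)$, and for each $N$ one has $S=h_N+\symGrad\big(\sum_{j<N}u_j\big)-\symGrad\big(\sum_{j\geq N}(w_j-u_j)\big)$ with the tail convergent in $\H{N+1}{\gat}(\om)$, whence $S\in\H{N}{\S,\gat,0}(\RotRott,\om)$ for every $N$ while $\pi_{\Div}S=\pi_{\Div}h=h$. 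With this (or an equivalent stabilization device) inserted, the remainder of your proposal --- lifting a basis, deducing linear independence from the images, and the symmetric argument for $\pi_{\RotRott}\eps^{-1}\H{\infty}{\S,\gan,0}(\Div,\om)$ --- goes through as you describe. To be fair, the paper itself passes from finite $\ell$ to the intersection without comment, so you have correctly identified a step that genuinely needs justification; you just have not supplied it.
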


Note that $(1-\pi_{\Div})$ and $(1-\pi_{\RotRott})$ are the $\L{2}{\S,\eps}(\om)$-orthonormal projectors onto 
the ranges $R(\symGradgat)$ and $R(\eps^{-1}\RotRottSgan)$, respectively, i.e.,
\begin{align*}
(1-\pi_{\Div}):\L{2}{\S,\eps}(\om)&\to R(\symGradgat),
&
(1-\pi_{\RotRott}):\L{2}{\S,\eps}(\om)&\to R(\eps^{-1}\RotRottSgan).
\end{align*}
By \eqref{helmcoho1}, Theorem \ref{theo:rangeselawithoutbdpot},
and Theorem \ref{theo:cohomologyinfty} we have, e.g.,
\begin{align}
\begin{aligned}
\label{helmcoho2}
\H{}{\S,\gat,0}(\RotRott,\om)
&=R(\symGradgat)
\oplus_{\L{2}{\S,\eps}(\om)}
\Harm{}{\S,\gat,\gan,\eps}(\om)\\
&=R(\symGradgat)
\oplus_{\L{2}{\S,\eps}(\om)}
\Lin\pi_{\Div}\B{\RotRott}{\S,\gat}(\om)\\
&=R(\symGradgat)
+(\pi_{\Div}-1)\Lin\B{\RotRott}{\S,\gat}(\om)
+\Lin\B{\RotRott}{\S,\gat}(\om)\\
&=R(\symGradgat)
+\Lin\B{\RotRott}{\S,\gat}(\om),\\
\H{k}{\S,\gat,0}(\RotRott,\om)
&=R(\symGradgat)\cap\H{k}{\S,\gat,0}(\RotRott,\om)
+\Lin\B{\RotRott}{\S,\gat}(\om),\\
&=R(\symGradgatk)
+\Lin\B{\RotRott}{\S,\gat}(\om).
\end{aligned}
\end{align}

Similarly, we obtain a decomposition of 
$\H{k}{\S,\gan,0}(\Div,\om)$ using $\B{\Div}{\S,\gan}(\om)$. We conclude:

\begin{theo}[bounded regular direct decompositions]
\label{theo:highorderregdecoinfty}
Let $k\geq0$. 
Then the bounded regular direct decompositions
\begin{align*}
\H{k}{\S,\gat}(\RotRott,\om)
&=R(\widetilde\PotQ_{\RotRottS,\gat}^{k,1})
\dotplus\H{k}{\S,\gat,0}(\RotRott,\om),\\
\H{k+1,k}{\S,\gat}(\RotRott,\om)
&=R(\widetilde\PotQ_{\RotRottS,\gat}^{k+1,k,1})
\dotplus\H{k+1}{\S,\gat,0}(\RotRott,\om),\\
\H{k}{\S,\gat,0}(\RotRott,\om)
&=\symGrad\H{k+1}{\gat}(\om)
\dotplus\Lin\B{\RotRott}{\S,\gat}(\om),\\
\H{k}{\S,\gan}(\Div,\om)
&=R(\widetilde\PotQ_{\DivS,\gan}^{k,1})
\dotplus\H{k}{\S,\gan,0}(\Div,\om),\\
\H{k}{\S,\gan,0}(\Div,\om)
&=\RotRott\H{k+2}{\S,\gan}(\om)
\dotplus\Lin\B{\Div}{\S,\gan}(\om)
\end{align*}
hold. Note that 
$R(\widetilde\PotQ_{\RotRottS,\gat}^{k,1}),R(\widetilde\PotQ_{\RotRottS,\gat}^{k+1,k,1})\subset\H{k+2}{\S,\gat}(\om)$
and $R(\widetilde\PotQ_{\DivS,\gan}^{k,1})\subset\H{k+1}{\S,\gan}(\om)$.
\end{theo}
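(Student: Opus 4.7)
The plan is to assemble the five decompositions from results already proved, treating them in three groups. The first, second, and fourth decompositions are essentially restatements of the bounded regular decompositions from Theorem \ref{theo:regdecohigherorder}, once we identify the range of the kernel projector. Indeed, Theorem \ref{theo:regdecohigherorder} already gives the direct sums $R(\widetilde\PotQ_{\RotRottS,\gat}^{k,1})\dotplus R(\widetilde\PotN_{\RotRottS,\gat}^{k})$ (and analogously for the non-standard space and the $\Div$-space), and Remark \ref{rem:regdecohigherorderproj}(iii'), applied to each of $\RotRottSgatk$, $\RotRottSgatkpok$, and $\DivSgank$, shows that the respective kernel projector acts as the identity on the kernel itself. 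Hence $R(\widetilde\PotN_{\RotRottS,\gat}^{k})=\H{k}{\S,\gat,0}(\RotRott,\om)$, and the corresponding identities for $\H{k+1}{\S,\gat,0}(\RotRott,\om)$ and $\H{k}{\S,\gan,0}(\Div,\om)$, yielding the first, second, and fourth asserted splittings.

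For the third decomposition, I would begin from the final line of the computation \eqref{helmcoho2}, which gives the (not yet directly asserted) representation
\[
\H{k}{\S,\gat,0}(\RotRott,\om) = R(\symGradgatk) + \Lin\B{\RotRott}{\S,\gat}(\om).
\]
By Theorem \ref{theo:rangeselawithoutbdpot} we have $R(\symGradgatk)=\symGrad\H{k+1}{\gat}(\om)$, so this is the stated sum representation. To establish directness, suppose $\symGrad v + \sum_{\ell=1}^{d_{\om,\gat}}\lambda_{\ell}B_{\S,\gat,\ell}^{\RotRott}=0$ with $v\in\H{k+1}{\gat}(\om)$ and scalars $\lambda_{\ell}\in\reals$. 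Apply the $\L{2}{\S,\eps}(\om)$-orthoprojector $\pi_{\Div}$ from \eqref{helmcoho1}: the gradient term vanishes since $\symGrad v\in R(\symGradgat)$ is $\L{2}{\S,\eps}(\om)$-orthogonal to $N(\DivSgan\eps)$, leaving $\sum_{\ell}\lambda_{\ell}\pi_{\Div}B_{\S,\gat,\ell}^{\RotRott}=0$. Since by Theorem \ref{theo:cohomologyinfty} the family $\{\pi_{\Div}B_{\S,\gat,\ell}^{\RotRott}\}_{\ell=1}^{d_{\om,\gat}}$ is a basis of $\Harm{}{\S,\gat,\gan,\eps}(\om)$, we conclude $\lambda_{\ell}=0$ for all $\ell$, and hence $\symGrad v=0$ as well. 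Thus the sum is direct.

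The fifth decomposition is proved by the strictly dual argument: by Theorem \ref{theo:rangeselawithoutbdpot} (applied on the $\gan$-side) together with the orthogonal Helmholtz decomposition of $N(\DivSgan\eps)$ in \eqref{helmcoho1}, we have
\[
\H{k}{\S,\gan,0}(\Div,\om) = \RotRott\H{k+2}{\S,\gan}(\om) + \Lin\B{\Div}{\S,\gan}(\om),
\]
and directness follows by testing with the orthoprojector $\pi_{\RotRott}$, using that $\pi_{\RotRott}$ annihilates $R(\eps^{-1}\RotRottSgan)\supset\RotRott\H{k+2}{\S,\gan}(\om)$ (up to the weight $\eps$, which is harmless here since $\eps=1$ can be assumed by the $\eps$-independence noted after Theorem \ref{theo:cptembzeroorder}), while $\{\pi_{\RotRott}\eps^{-1}B_{\S,\gan,\ell}^{\Div}\}$ is a basis of $\Harm{}{\S,\gat,\gan,\eps}(\om)$ by Theorem \ref{theo:cohomologyinfty}.

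There is no real obstacle in any of these steps; the work consists of invoking the correct orthogonal projector for each kernel decomposition and matching it with the corresponding pre-basis from Theorem \ref{theo:cohomologyinfty}. The boundedness of the regular decompositions is inherited from Theorem \ref{theo:regdecohigherorder}, and the inclusions $R(\widetilde\PotQ_{\RotRottS,\gat}^{k,1}),R(\widetilde\PotQ_{\RotRottS,\gat}^{k+1,k,1})\subset\H{k+2}{\S,\gat}(\om)$ and $R(\widetilde\PotQ_{\DivS,\gan}^{k,1})\subset\H{k+1}{\S,\gan}(\om)$ are exactly the target-space statements already recorded in Lemma \ref{lem:highorderregdecoela} and Theorem \ref{theo:regdecohigherorder}, so they require no additional argument.
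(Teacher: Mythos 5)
Your treatment of the algebraic content is essentially the paper's own: the first, second and fourth splittings are read off from Theorem \ref{theo:regdecohigherorder} (with $\gat$ and $\gan$ interchanged for the fourth), the sum representation of the kernel comes from \eqref{helmcoho2} together with $R(\symGradgatk)=\symGrad\H{k+1}{\gat}(\om)$, and directness is obtained by applying $\pi_{\Div}$ (resp.\ $\pi_{\RotRott}$) and using that $\pi_{\Div}\B{\RotRott}{\S,\gat}(\om)$ is a basis of $\Harm{}{\S,\gat,\gan,\eps}(\om)$ by Theorem \ref{theo:cohomologyinfty}. All of that is fine.

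There is, however, a genuine gap in your final paragraph: the theorem asserts \emph{bounded} regular direct decompositions, and for the third and fifth decompositions this boundedness is not ``inherited from Theorem \ref{theo:regdecohigherorder}'' -- that theorem decomposes $\H{k}{\S,\gat}(\RotRott,\om)$, $\H{k+1,k}{\S,\gat}(\RotRott,\om)$ and $\H{k}{\S,\gat}(\Div,\om)$, but contains no decomposition of the kernels $\H{k}{\S,\gat,0}(\RotRott,\om)$ and $\H{k}{\S,\gan,0}(\Div,\om)$ into $\symGrad\H{k+1}{\gat}(\om)\dotplus\Lin\B{\RotRott}{\S,\gat}(\om)$ resp.\ $\RotRott\H{k+2}{\S,\gan}(\om)\dotplus\Lin\B{\Div}{\S,\gan}(\om)$, and \eqref{helmcoho2} only yields an algebraic sum. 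What has to be shown is that for $S=\symGrad v+B\in\H{k}{\S,\gat,0}(\RotRott,\om)$ one can choose the potential and the finite-dimensional part continuously in $S$: the paper does this by replacing $v$ with $u:=\PotP_{\symGrad,\gat}^{k}\symGrad v$, bounded by Theorem \ref{theo:regpothigherorder}, and by estimating $\norm{B}_{\H{k}{\S}(\om)}\leq c\norm{\pi_{\Div}S}_{\L{2}{\S}(\om)}\leq c\norm{S}_{\H{k}{\S}(\om)}$ via the topological isomorphism $I_{\pi,\Div}:\Lin\B{\RotRott}{\S,\gat}(\om)\to\Harm{}{\S,\gat,\gan,\eps}(\om)$ between finite-dimensional spaces (and analogously with $I_{\pi,\RotRott}$ on the $\gan$-side). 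This constitutes roughly half of the paper's proof and is missing from yours; alternatively you could argue that $\symGrad\H{k+1}{\gat}(\om)=R(\symGradgatk)$ is closed in $\H{k}{\S}(\om)$ (Theorem \ref{theo:rangeselawithoutbdpot}) and the complement is finite-dimensional, so the projections are bounded, and then still apply $\PotP_{\symGrad,\gat}^{k}$ to produce a bounded potential -- but some such argument must be supplied, not just a reference to Theorem \ref{theo:regdecohigherorder}.
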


\begin{rem}[bounded regular direct decompositions]
\label{rem:highorderregdecoinfty}
In particular, for $k=0$
\begin{align*}
\H{}{\S,\gat}(\RotRott,\om)
&=R(\widetilde\PotQ_{\RotRottS,\gat}^{0,1})
\dotplus\H{}{\S,\gat,0}(\RotRott,\om),\\
\H{}{\S,\gat,0}(\RotRott,\om)
&=\symGrad\H{1}{\gat}(\om)
\dotplus\Lin\B{\RotRott}{\S,\gat}(\om)\\
&=\symGrad\H{1}{\gat}(\om)
\oplus_{\L{2}{\S,\eps}(\om)}
\Harm{}{\S,\gat,\gan,\eps}(\om),\\
\H{}{\S,\gan}(\Div,\om)
&=R(\widetilde\PotQ_{\DivS,\gan}^{0,1})
\dotplus\H{}{\S,\gan,0}(\Div,\om),\\
\eps^{-1}\H{}{\S,\gan,0}(\Div,\om)
&=\eps^{-1}\RotRott\H{2}{\S,\gan}(\om)
\dotplus\eps^{-1}\Lin\B{\Div}{\S,\gan}(\om)\\
&=\eps^{-1}\RotRott\H{2}{\S,\gan}(\om)
\oplus_{\L{2}{\S,\eps}(\om)}
\Harm{}{\S,\gat,\gan,\eps}(\om)
\intertext{and}
\L{2}{\S,\eps}(\om)
&=\H{}{\S,\gat,0}(\RotRott,\om)
\oplus_{\L{2}{\S,\eps}(\om)}
\eps^{-1}\RotRott\H{2}{\S,\gan}(\om)\\
&=\symGrad\H{1}{\gat}(\om)
\oplus_{\L{2}{\S,\eps}(\om)}
\eps^{-1}\H{}{\S,\gan,0}(\Div,\om).
\end{align*}
\end{rem}

\begin{proof}[Proof of Theorem \ref{theo:highorderregdecoinfty}.]
Theorem \ref{theo:regdecohigherorder} and \eqref{helmcoho2} show
\begin{align*}
\H{k}{\S,\gat}(\RotRott,\om)
&=R(\widetilde\PotQ_{\RotRottS,\gat}^{k,1})
\dotplus\H{k}{\S,\gat,0}(\RotRott,\om),\\
\H{k+1,k}{\S,\gat}(\RotRott,\om)
&=R(\widetilde\PotQ_{\RotRottS,\gat}^{k+1,k,1})
\dotplus\H{k+1}{\S,\gat,0}(\RotRott,\om),\\
\H{k}{\S,\gat,0}(\RotRott,\om)
&=\symGrad\H{k+1}{\gat}(\om)
+\Lin\B{\RotRott}{\S,\gat}(\om).
\end{align*}
To prove the directness, let
$$\sum_{\ell=1}^{d_{\om,\gat}}\lambda_{\ell}B_{\S,\gat,\ell}^{\RotRott}
\in\symGrad\H{k+1}{\gat}(\om)
\cap\B{\RotRott}{\S,\gat}(\om).$$
Then $0=\sum_{\ell}\lambda_{\ell}\pi_{\Div}B_{\S,\gat,\ell}^{\RotRott}
\in\Lin\pi_{\Div}B_{\S,\gat,\ell}^{\RotRott}$
and hence $\lambda_{\ell}=0$ for all $\ell$
as $\pi_{\Div}B_{\S,\gat,\ell}^{\RotRott}$ is a basis of 
$\Harm{}{\S,\gat,\gan,\eps}(\om)$
by Theorem \ref{theo:cohomologyinfty}.
Concerning the boundedness of the decompositions, let 
$$\H{k}{\S,\gat,0}(\RotRott,\om)\ni S=\symGrad v+B,\qquad
v\in\H{k+1}{\gat}(\om),\quad
B\in\Lin\B{\RotRott}{\S,\gat}(\om).$$
By Theorem \ref{theo:regpothigherorder}
$\symGrad v\in R(\symGradgatk)$ and 
$u:=\PotP_{\symGrad,\gat}^{k}\symGrad v\in\H{k+1}{\gat}(\om)$ solves
$\symGrad u=\symGrad v$ with 
$\norm{u}_{\H{k+1}{}(\om)}\leq c\norm{\symGrad v}_{\H{k}{\S}(\om)}$.
Therefore, 
$$\norm{u}_{\H{k+1}{}(\om)}
+\norm{B}_{\H{k}{\S}(\om)}
\leq c\big(\norm{\symGrad v}_{\H{k}{\S}(\om)}
+\norm{B}_{\H{k}{\S}(\om)}\big)
\leq c\big(\norm{S}_{\H{k}{\S}(\om)}
+\norm{B}_{\H{k}{\S}(\om)}\big).$$
Note that the mapping
$$I_{\pi,\Div}:\Lin\B{\RotRott}{\S,\gat}(\om)\to
\Lin\pi_{\Div}\B{\RotRott}{\S,\gat}(\om)
=\Harm{}{\S,\gat,\gan,\eps}(\om);\quad
B_{\S,\gat,\ell}^{\RotRott}\mapsto\pi_{\Div}B_{\S,\gat,\ell}^{\RotRott}$$
is a topological isomorphism 
(between finite dimensional spaces and with arbitrary norms). Thus 
$$\norm{B}_{\H{k}{\S}(\om)}
\leq c\norm{B}_{\L{2}{\S}(\om)}
\leq c\norm{\pi_{\Div}B}_{\L{2}{\S}(\om)}
=c\norm{\pi_{\Div}S}_{\L{2}{\S}(\om)}
\leq c\norm{S}_{\L{2}{\S}(\om)}
\leq c\norm{S}_{\H{k}{\S}(\om)}.$$
Finally, we see 
$S=\symGrad u+B
\in\symGrad\H{k+1}{\gat}(\om)
\dotplus\Lin\B{\RotRott}{\S,\gat}(\om)$
and 
$$\norm{u}_{\H{k+1}{}(\om)}
+\norm{B}_{\H{k}{\S}(\om)}
\leq c\norm{S}_{\H{k}{\S}(\om)}.$$
The other assertions for $\Div$ follow analogously.
\end{proof}

\begin{rem}[bounded regular direct decompositions]
\label{rem:highorderregdecoinfty2}
By Theorem \ref{theo:highorderregdecoinfty} we have, e.g.,
\begin{align*}
\H{k}{\S,\gat}(\RotRott,\om)
&=R(\widetilde\PotQ_{\RotRottS,\gat}^{k,1})
\dotplus\Lin\B{\RotRott}{\S,\gat}(\om)
\dotplus\symGrad\H{k+1}{\gat}(\om)\\
&=\H{k+2}{\S,\gat}(\om)
+\symGrad\H{k+1}{\gat}(\om)
\end{align*}
with bounded linear regular direct decomposition operators
\begin{align*}
\widehat\PotQ_{\RotRottS,\gat}^{k,1}:
\H{k}{\S,\gat}(\RotRott,\om)&\to R(\widetilde\PotQ_{\RotRottS,\gat}^{k,1})
\subset\H{k+2}{\S,\gat}(\om),\\
\widehat\PotQ_{\RotRottS,\gat}^{k,\infty}:
\H{k}{\S,\gat}(\RotRott,\om)&\to\Lin\B{\RotRott}{\S,\gat}(\om)
\subset\H{\infty}{\S,\gat,0}(\RotRott,\om)
\subset\H{k+2}{\S,\gat}(\om),\\
\widehat\PotQ_{\RotRottS,\gat}^{k,0}:
\H{k}{\S,\gat}(\RotRott,\om)&\to
\H{k+1}{\gat}(\om)
\end{align*}
satisfying 
$\widehat\PotQ_{\RotRottS,\gat}^{k,1}
+\widehat\PotQ_{\RotRottS,\gat}^{k,\infty}
+\symGrad\widehat\PotQ_{\RotRottS,\gat}^{k,0}
=\id_{\H{k}{\S,\gat}(\RotRott,\om)}$.

A closer inspection of the latter proof 
allows for a more precise description of these bounded decomposition operators.
For this, let $S\in\H{k}{\S,\gat}(\RotRott,\om)$. 
According to Theorem \ref{theo:regdecohigherorder} and 
Remark \ref{rem:regdecohigherorderproj} we decompose 
$$S=S_{R}+S_{N}\in 
R(\widetilde\PotQ_{\RotRottS,\gat}^{k,1})
\dotplus 
R(\widetilde\PotN_{\RotRottS,\gat}^{k})$$
with 
$R(\widetilde\PotN_{\RotRottS,\gat}^{k})
=\H{k}{\S,\gat,0}(\RotRott,\om)
=N(\RotRottSgatk)$
as well as $S_{R}=\widetilde\PotQ_{\RotRottS,\gat}^{k,1}S$
and $S_{N}=\widetilde\PotN_{\RotRottS,\gat}^{k}S$.
By Theorem \ref{theo:highorderregdecoinfty} we further decompose
\begin{align*}
\H{k}{\S,\gat,0}(\RotRott,\om)\ni 
S_{N}
=\symGrad u+B
\in\symGrad\H{k+1}{\gat}(\om)
\dotplus\Lin\B{\RotRott}{\S,\gat}(\om).
\end{align*}
Then 
$\pi_{\Div}S_{N}
=\pi_{\Div}B\in\Harm{}{\S,\gat,\gan,\eps}(\om)$
and thus 
$B=I_{\pi,\Div}^{-1}\pi_{\Div}S_{N}\in\Lin\B{\RotRott}{\S,\gat}(\om)$.
Therefore, 
$u=\PotP_{\symGrad,\gat}^{k}\symGrad u
=\PotP_{\symGrad,\gat}^{k}(S_{N}-B)
=\PotP_{\symGrad,\gat}^{k}(1-I_{\pi,\Div}^{-1}\pi_{\Div})S_{N}$.
Finally we see
\begin{align*}
\widehat\PotQ_{\RotRottS,\gat}^{k,1}
&=\widetilde\PotQ_{\RotRottS,\gat}^{k,1}
=\PotP_{\RotRottS,\gat}^{k}\RotRottSgatk
=\PotQ_{\RotRottS,\gat}^{k,1}(\RotRottSgatk)_{\bot}^{-1}\RotRottSgatk,\\
\widehat\PotQ_{\RotRottS,\gat}^{k,\infty}
&=I_{\pi,\Div}^{-1}\pi_{\Div}\widetilde\PotN_{\RotRottS,\gat}^{k},\\
\widehat\PotQ_{\RotRottS,\gat}^{k,0}
&=\PotP_{\symGrad,\gat}^{k}(1-I_{\pi,\Div}^{-1}\pi_{\Div})\widetilde\PotN_{\RotRottS,\gat}^{k}
\end{align*}
with 
$\widetilde\PotN_{\RotRottS,\gat}^{k}
=1-\widetilde\PotQ_{\RotRottS,\gat}^{k,1}$.
Analogously, we have
\begin{align*}
\H{k+1,k}{\S,\gat}(\RotRott,\om)
&=R(\widetilde\PotQ_{\RotRottS,\gat}^{k+1,k,1})
\dotplus\Lin\B{\RotRott}{\S,\gat}(\om)
\dotplus\symGrad\H{k+2}{\gat}(\om)\\
&=\H{k+2}{\S,\gat}(\om)
+\symGrad\H{k+2}{\gat}(\om),\\
\H{k}{\S,\gan}(\Div,\om)
&=R(\widetilde\PotQ_{\DivS,\gan}^{k,1})
\dotplus\Lin\B{\Div}{\S,\gan}(\om)
\dotplus\RotRott\H{k+2}{\S,\gan}(\om)\\
&=\H{k+1}{\S,\gan}(\om)
+\RotRott\H{k+2}{\S,\gan}(\om)
\end{align*}
with bounded linear regular direct decomposition operators
\begin{align*}
\widehat\PotQ_{\RotRottS,\gat}^{k+1,k,1}:
\H{k+1,k}{\S,\gat}(\RotRott,\om)&\to R(\widetilde\PotQ_{\RotRottS,\gat}^{k+1,k,1})
\subset\H{k+2}{\S,\gat}(\om),\\
\widehat\PotQ_{\RotRottS,\gat}^{k+1,k,\infty}:
\H{k+1,k}{\S,\gat}(\RotRott,\om)&\to\Lin\B{\RotRott}{\S,\gat}(\om)
\subset\H{\infty}{\S,\gat,0}(\RotRott,\om)
\subset\H{k+2}{\S,\gat}(\om),\\
\widehat\PotQ_{\RotRottS,\gat}^{k+1,k,0}:
\H{k+1,k}{\S,\gat}(\RotRott,\om)&\to
\H{k+2}{\gat}(\om),\\
\widehat\PotQ_{\DivS,\gan}^{k,1}:
\H{k}{\S,\gan}(\Div,\om)&\to R(\widetilde\PotQ_{\DivS,\gan}^{k,1})
\subset\H{k+1}{\S,\gan}(\om),\\
\widehat\PotQ_{\DivS,\gan}^{k,\infty}:
\H{k}{\S,\gan}(\Div,\om)&\to\Lin\B{\Div}{\S,\gan}(\om)
\subset\H{\infty}{\S,\gan,0}(\Div,\om)
\subset\H{k+1}{\S,\gan}(\om),\\
\widehat\PotQ_{\DivS,\gan}^{k,0}:
\H{k}{\S,\gan}(\Div,\om)&\to
\H{k+2}{\S,\gan}(\om)
\end{align*}
satisfying 
\begin{align*}
\widehat\PotQ_{\RotRottS,\gat}^{k+1,k,1}
+\widehat\PotQ_{\RotRottS,\gat}^{k+1,k,\infty}
+\symGrad\widehat\PotQ_{\RotRottS,\gat}^{k+1,k,0}
&=\id_{\H{k+1,k}{\S,\gat}(\RotRott,\om)},\\
\widehat\PotQ_{\DivS,\gan}^{k,1}
+\widehat\PotQ_{\DivS,\gan}^{k,\infty}
+\RotRott\widehat\PotQ_{\DivS,\gan}^{k,0}
&=\id_{\H{k}{\S,\gan}(\Div,\om)}
\end{align*}
and 
\begin{align*}
\widehat\PotQ_{\RotRottS,\gat}^{k+1,k,1}
&=\widetilde\PotQ_{\RotRottS,\gat}^{k+1,k,1}
=\PotP_{\RotRottS,\gat}^{k+1,k}\RotRottSgatkpok,\\
\widehat\PotQ_{\RotRottS,\gat}^{k+1,k,\infty}
&=I_{\pi,\Div}^{-1}\pi_{\Div}\widetilde\PotN_{\RotRottS,\gat}^{k+1,k},\\
\widehat\PotQ_{\RotRottS,\gat}^{k+1,k,0}
&=\PotP_{\symGrad,\gat}^{k+1}(1-I_{\pi,\Div}^{-1}\pi_{\Div})\widetilde\PotN_{\RotRottS,\gat}^{k+1,k},\\
\widehat\PotQ_{\DivS,\gan}^{k,1}
&=\widetilde\PotQ_{\DivS,\gan}^{k,1}
=\PotP_{\DivS,\gan}^{k}\DivSgank,\\
\widehat\PotQ_{\DivS,\gan}^{k,\infty}
&=I_{\pi,\RotRott}^{-1}\pi_{\RotRott}\widetilde\PotN_{\DivS,\gan}^{k},\\
\widehat\PotQ_{\DivS,\gan}^{k,0}
&=\PotP_{\RotRottS,\gan}^{k}(1-I_{\pi,\RotRott}^{-1}\pi_{\RotRott})\widetilde\PotN_{\DivS,\gan}^{k}
\end{align*}
with 
\begin{align*}
\widetilde\PotN_{\RotRottS,\gat}^{k+1,k}
&=1-\widetilde\PotQ_{\RotRottS,\gat}^{k+1,k,1},
&
\widetilde\PotN_{\DivS,\gan}^{k}
&=1-\widehat\PotQ_{\DivS,\gan}^{k,1},\\
\PotP_{\RotRottS,\gat}^{k+1,k}
&=\PotQ_{\RotRottS,\gat}^{k+1,k,1}(\RotRottSgatkpok)_{\bot}^{-1},
&
\PotP_{\DivS,\gan}^{k}
&=\PotQ_{\DivS,\gan}^{k,1}(\DivSgank)_{\bot}^{-1},
\end{align*}
and
$$I_{\pi,\RotRott}:\Lin\B{\Div}{\S,\gan}(\om)\to
\Lin\pi_{\RotRott}\B{\Div}{\S,\gan}(\om)
=\Harm{}{\S,\gan,\gan,\eps}(\om);\quad
B_{\S,\gan,\ell}^{\Div}\mapsto\pi_{\RotRott}B_{\S,\gan,\ell}^{\Div}.$$
\end{rem}

Noting
\begin{align}
\label{Bortho}
R(\eps^{-1}\RotRottSgan)\bot_{\L{2}{\S,\eps}(\om)}\B{\RotRott}{\S,\gat}(\om),\qquad
R(\symGradgat)\bot_{\L{2}{\S}(\om)}\B{\Div}{\S,\gan}(\om)
\end{align}
we see:

\begin{theo}[alternative Dirichlet/Neumann projections]
\label{theo:Bcoho1}
It holds
\begin{align*}
\Harm{}{\S,\gat,\gan,\eps}(\om)\cap\B{\RotRott}{\S,\gat}(\om)^{\bot_{\L{2}{\S,\eps}(\om)}}
&=\{0\},\\
N(\DivSgan\eps)\cap\B{\RotRott}{\S,\gat}(\om)^{\bot_{\L{2}{\S,\eps}(\om)}}
&=R(\eps^{-1}\RotRottSgan),\\
\Harm{}{\S,\gat,\gan,\eps}(\om)\cap\B{\Div}{\S,\gan}(\om)^{\bot_{\L{2}{\S}(\om)}}
&=\{0\},\\
N(\RotRottSgat)\cap\B{\Div}{\S,\gan}(\om)^{\bot_{\L{2}{\S}(\om)}}
&=R(\symGradgat).
\end{align*}
Moreover, for all $k\geq0$
\begin{align*}
N(\DivSgank\eps)\cap\B{\RotRott}{\S,\gat}(\om)^{\bot_{\L{2}{\S,\eps}(\om)}}
&=R(\eps^{-1}\RotRottSgank)
=\eps^{-1}\RotRott\H{k+2}{\S,\gan}(\om),\\
N(\RotRottSgatk)\cap\B{\Div}{\S,\gan}(\om)^{\bot_{\L{2}{\S}(\om)}}
&=R(\symGradgatk)
=\symGrad\H{k+1}{\gat}(\om).
\end{align*}
\end{theo}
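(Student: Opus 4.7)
My plan is to exploit three ingredients in combination: (a) the fact that $\pi_{\Div}\B{\RotRott}{\S,\gat}(\om)$ and $\pi_{\RotRott}\eps^{-1}\B{\Div}{\S,\gan}(\om)$ are \emph{bases} of $\Harm{}{\S,\gat,\gan,\eps}(\om)$ (Theorem \ref{theo:cohomologyinfty}); (b) the orthogonality relations \eqref{Bortho}; and (c) the Helmholtz decompositions \eqref{helmcoho1} together with the range characterisations of Theorem \ref{theo:rangeselawithoutbdpot}.

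First I would dispose of the two ``trivial-intersection'' identities. Given $h\in\Harm{}{\S,\gat,\gan,\eps}(\om)\cap\B{\RotRott}{\S,\gat}(\om)^{\bot_{\L{2}{\S,\eps}(\om)}}$, I expand $h=\sum_m\lambda_m\pi_{\Div}B_{\S,\gat,m}^{\RotRott}$ using Theorem \ref{theo:cohomologyinfty}. Since $(1-\pi_{\Div})B_{\S,\gat,\ell}^{\RotRott}\in R(\symGradgat)$ and $h\bot_{\L{2}{\S,\eps}(\om)}R(\symGradgat)$ by \eqref{helmcoho1}, the hypothesis reduces to $0=\langle h,\pi_{\Div}B_{\S,\gat,\ell}^{\RotRott}\rangle_{\L{2}{\S,\eps}(\om)}$ for every $\ell$, i.e., a Gram system for the basis $\{\pi_{\Div}B_{\S,\gat,m}^{\RotRott}\}$, forcing $\lambda=0$ and hence $h=0$. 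The symmetric statement $\Harm{}{\S,\gat,\gan,\eps}(\om)\cap\B{\Div}{\S,\gan}(\om)^{\bot_{\L{2}{\S}(\om)}}=\{0\}$ follows identically, testing instead against $\eps^{-1}B_{\S,\gan,\ell}^{\Div}$ and using $(1-\pi_{\RotRott})\eps^{-1}B_{\S,\gan,\ell}^{\Div}\in R(\eps^{-1}\RotRottSgan)\bot_{\L{2}{\S,\eps}(\om)}\Harm{}{\S,\gat,\gan,\eps}(\om)$. From these the zero-order range identities follow at once: ``$\supset$'' is the complex property plus \eqref{Bortho}, and for ``$\subset$'' I Helmholtz-decompose $S=h+r\in\Harm{}{\S,\gat,\gan,\eps}(\om)\oplus_{\L{2}{\S,\eps}(\om)}R(\eps^{-1}\RotRottSgan)$, observe that $r\bot_{\L{2}{\S,\eps}(\om)}\B{\RotRott}{\S,\gat}(\om)$ by \eqref{Bortho}, deduce $h\bot_{\L{2}{\S,\eps}(\om)}\B{\RotRott}{\S,\gat}(\om)$, and conclude $h=0$ from the trivial-intersection step; the identity $R(\symGradgat)=N(\RotRottSgat)\cap\B{\Div}{\S,\gan}(\om)^{\bot_{\L{2}{\S}(\om)}}$ is obtained by the dual argument based on $N(\RotRottSgat)=R(\symGradgat)\oplus_{\L{2}{\S,\eps}(\om)}\Harm{}{\S,\gat,\gan,\eps}(\om)$.

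For the higher-order identities, ``$\supset$'' is again immediate from the complex property and \eqref{Bortho}. For the converse of the first, given $S\in N(\DivSgank\eps)\cap\B{\RotRott}{\S,\gat}(\om)^{\bot_{\L{2}{\S,\eps}(\om)}}$, one has $\eps S\in\H{k}{\S,\gan,0}(\Div,\om)$, and by Theorem \ref{theo:rangeselawithoutbdpot} (applied with the roles of $\gat$, $\gan$ swapped) it suffices to show $\eps S\bot_{\L{2}{\S}(\om)}\Harm{}{\S,\gat,\gan,\eps}(\om)$, equivalently $S\bot_{\L{2}{\S,\eps}(\om)}\Harm{}{\S,\gat,\gan,\eps}(\om)$. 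Expanding $h=\sum_m\lambda_m\pi_{\Div}B_{\S,\gat,m}^{\RotRott}$ and decomposing each basis field as $\pi_{\Div}B_{\S,\gat,m}^{\RotRott}=B_{\S,\gat,m}^{\RotRott}-\symGrad u_m$ with $u_m\in\H{1}{\gat}(\om)$, the hypothesis combined with the adjoint identity $\langle S,\symGrad u_m\rangle_{\L{2}{\S,\eps}(\om)}=\langle\eps S,\symGrad u_m\rangle_{\L{2}{\S}(\om)}=-\langle\Div\eps S,u_m\rangle_{\L{2}{}(\om)}=0$ delivers the required orthogonality, whence $\eps S=\RotRott T$ for some $T\in\H{k+2}{\S,\gan}(\om)$ by Theorem \ref{theo:rangeselawithoutbdpot}. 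The identity for $R(\symGradgatk)$ is fully analogous, writing $\pi_{\RotRott}\eps^{-1}B_{\S,\gan,m}^{\Div}=\eps^{-1}B_{\S,\gan,m}^{\Div}-\eps^{-1}\RotRott T_m$ with $T_m\in\H{}{\S,\gan}(\RotRott,\om)$ and using $\langle S,\RotRott T_m\rangle_{\L{2}{\S}(\om)}=\langle\RotRott S,T_m\rangle_{\L{2}{\S}(\om)}=0$ in the adjoint step. The main obstacle I anticipate is the careful bookkeeping between the two pairings $\L{2}{\S}(\om)$ and $\L{2}{\S,\eps}(\om)$: the orthogonality against $\B{\RotRott}{\S,\gat}(\om)$ is a \emph{weighted} statement, while Theorem \ref{theo:rangeselawithoutbdpot} demands an \emph{unweighted} orthogonality of $\eps S$ against $\Harm{}{\S,\gat,\gan,\eps}(\om)$, so the weight $\eps$ must be shuffled correctly through the Helmholtz projections and the adjoint integration by parts.
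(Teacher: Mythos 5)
Your proof is correct, and its zero-order half coincides with the paper's: both reduce the two trivial-intersection claims to testing against the bases $\pi_{\Div}\B{\RotRott}{\S,\gat}(\om)$ resp.\ $\pi_{\RotRott}\eps^{-1}\B{\Div}{\S,\gan}(\om)$ of $\Harm{}{\S,\gat,\gan,\eps}(\om)$ from Theorem \ref{theo:cohomologyinfty} (the paper uses self-adjointness of the orthogonal projectors where you use $(1-\pi_{\Div})B_{\S,\gat,\ell}^{\RotRott}\in R(\symGradgat)\bot_{\L{2}{\S,\eps}(\om)}\Harm{}{\S,\gat,\gan,\eps}(\om)$ --- the same computation), and then both deduce the zero-order range identities from \eqref{helmcoho1} and \eqref{Bortho}. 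For the higher-order identities you take a genuinely different, though equally valid, route. The paper simply intersects the $k=0$ identity with $\H{k}{\S,\gat}(\om)$:
\begin{align*}
N(\RotRottSgatk)\cap\B{\Div}{\S,\gan}(\om)^{\bot_{\L{2}{\S}(\om)}}
=\H{k}{\S,\gat}(\om)\cap N(\RotRottSgat)\cap\B{\Div}{\S,\gan}(\om)^{\bot_{\L{2}{\S}(\om)}}
=\H{k}{\S,\gat}(\om)\cap R(\symGradgat)
=R(\symGradgatk),
\end{align*}
invoking the characterisation $R(\symGradgatk)=\H{k}{\S,\gat}(\om)\cap R(\symGradgat)$ from Theorem \ref{theo:rangeselawithoutbdpot}; you instead re-derive orthogonality to $\Harm{}{\S,\gat,\gan,\eps}(\om)$ at order $k$ by expanding in the pre-basis and integrating by parts, and then use the other characterisation $R(\symGradgatk)=\H{k}{\S,\gat,0}(\RotRott,\om)\cap\Harm{}{\S,\gat,\gan,\eps}(\om)^{\bot_{\L{2}{\S,\eps}(\om)}}$ from the same theorem. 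What the paper's intersection trick buys is precisely the avoidance of the second round of duality and $\eps$-bookkeeping that you rightly flag as the delicate point: once the $k=0$ identity is established, the higher-order statement becomes purely set-theoretic and no further integration by parts is needed.
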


\begin{proof}
For $k=0$ and 
$S\in\Harm{}{\S,\gat,\gan,\eps}(\om)\cap\B{\RotRott}{\S,\gat}(\om)^{\bot_{\L{2}{\S,\eps}(\om)}}$
we have
\begin{align*}
0=\scp{S}{B_{\S,\gat,\ell}^{\RotRott}}_{\L{2}{\S,\eps}(\om)}
=\scp{\pi_{\Div}S}{B_{\S,\gat,\ell}^{\RotRott}}_{\L{2}{\S,\eps}(\om)}
=\scp{S}{\pi_{\Div}B_{\S,\gat,\ell}^{\RotRott}}_{\L{2}{\S,\eps}(\om)}
\end{align*}
and hence $S=0$ by Theorem \ref{theo:cohomologyinfty}.
Analogously, we see for 
$S\in\Harm{}{\S,\gat,\gan,\eps}(\om)\cap\B{\Div}{\S,\gan}(\om)^{\bot_{\L{2}{\S}(\om)}}$
\begin{align*}
0=\scp{S}{B_{\S,\gan,\ell}^{\Div}}_{\L{q}{}(\om)}
=\scp{\pi_{\RotRott}S}{\eps^{-1}B_{\S,\gan,\ell}^{\Div}}_{\L{2}{\S,\eps}(\om)}
=\scp{S}{\pi_{\RotRott}\eps^{-1}B_{\S,\gan,\ell}^{\Div}}_{\L{2}{\S,\eps}(\om)}
\end{align*}
and thus $S=0$ again by Theorem \ref{theo:cohomologyinfty}.
According to \eqref{helmcoho1} we can decompose
\begin{align*}
N(\DivSgan\eps)
&=R(\eps^{-1}\RotRottSgan)
\oplus_{\L{2}{\S,\eps}(\om)}
\Harm{}{\S,\gat,\gan,\eps}(\om),\\
N(\RotRottSgat)
&=R(\symGradgat)
\oplus_{\L{2}{\S,\eps}(\om)}
\Harm{}{\S,\gat,\gan,\eps}(\om),
\end{align*}
which shows by \eqref{Bortho} the other two assertions.
Let $k\geq0$. The case $k=0$ and Theorem \ref{theo:rangeselawithoutbdpot} show
\begin{align*}
N(\RotRottSgatk)\cap\B{\Div}{\S,\gan}(\om)^{\bot_{\L{2}{\S}(\om)}}
&=\H{k}{\S,\gat}(\om)\cap N(\RotRottSgat)\cap\B{\Div}{\S,\gan}(\om)^{\bot_{\L{2}{\S}(\om)}}\\
&=\H{k}{\S,\gat}(\om)\cap R(\symGradgat)\\
&=R(\symGradgatk)
=\symGrad\H{k+1}{\gat}(\om).
\end{align*}
Analogously,
\begin{align*}
N(\DivSgank\eps)\cap\B{\RotRott}{\S,\gat}(\om)^{\bot_{\L{2}{\S,\eps}(\om)}}
&=\eps^{-1}\H{k}{\S,\gan}(\om)\cap N(\DivSgan\eps)\cap\B{\RotRott}{\S,\gat}(\om)^{\bot_{\L{2}{\S,\eps}(\om)}}\\
&=\eps^{-1}\H{k}{\S,\gan}(\om)\cap R(\eps^{-1}\RotRottSgan)\\
&=R(\eps^{-1}\RotRottSgank)
=\eps^{-1}\RotRott\H{k+2}{\S,\gan}(\om),
\end{align*}
completing the proof.
\end{proof}

Theorem \ref{theo:highorderregdecoinfty} implies:

\begin{theo}[cohomology groups]
\label{theo:Bcoho2}
It holds
$$\frac{N(\RotRottSgatk)}{R(\symGradgatk)}
\cong\Lin\B{\RotRott}{\S,\gat}(\om)
\cong\Harm{}{\S,\gat,\gan,\eps}(\om)
\cong\Lin\B{\Div}{\S,\gan}(\om)
\cong\frac{N(\DivSgank)}{R(\RotRottSgank)}.$$
In particular, the dimensions of the cohomology groups
(Dirichlet/Neumann fields) are independent of $k$ and $\eps$ and it holds
\begin{align*}
d_{\om,\gat}
=\dim\big(N(\RotRottSgatk)/R(\symGradgatk)\big)
=\dim\big(N(\DivSgank)/R(\RotRottSgank)\big).
\end{align*}
\end{theo}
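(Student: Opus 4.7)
The plan is to chain together isomorphisms, reading them directly off the bounded regular direct decompositions established in Theorem \ref{theo:highorderregdecoinfty} together with the identification of ranges from Theorem \ref{theo:rangeselawithoutbdpot} and the smooth pre-bases from Theorem \ref{theo:cohomologyinfty}. The key observation is that every one of the four outer objects has already essentially been described, so the theorem is a bookkeeping exercise that converts these descriptions into canonical isomorphisms.

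First, I would treat the left-hand quotient. By Theorem \ref{theo:highorderregdecoinfty} we have the direct decomposition
\begin{align*}
N(\RotRottSgatk)
=\H{k}{\S,\gat,0}(\RotRott,\om)
=\symGrad\H{k+1}{\gat}(\om)\dotplus\Lin\B{\RotRott}{\S,\gat}(\om),
\end{align*}
and by Theorem \ref{theo:rangeselawithoutbdpot} the first summand equals $R(\symGradgatk)$. Hence the canonical projection onto the second summand induces a linear isomorphism
\begin{align*}
N(\RotRottSgatk)/R(\symGradgatk)\cong\Lin\B{\RotRott}{\S,\gat}(\om).
\end{align*}
The map $I_{\pi,\Div}$ introduced in the proof of Theorem \ref{theo:highorderregdecoinfty}, defined by $B_{\S,\gat,\ell}^{\RotRott}\mapsto\pi_{\Div}B_{\S,\gat,\ell}^{\RotRott}$, is a topological isomorphism between finite dimensional spaces and, by Theorem \ref{theo:cohomologyinfty}, maps $\Lin\B{\RotRott}{\S,\gat}(\om)$ onto $\Harm{}{\S,\gat,\gan,\eps}(\om)$. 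This yields the first two $\cong$'s.

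The remaining two isomorphisms follow by the same argument applied to the dual side: Theorem \ref{theo:highorderregdecoinfty} provides
\begin{align*}
N(\DivSgank)
=\H{k}{\S,\gan,0}(\Div,\om)
=\RotRott\H{k+2}{\S,\gan}(\om)\dotplus\Lin\B{\Div}{\S,\gan}(\om),
\end{align*}
and by Theorem \ref{theo:rangeselawithoutbdpot} the first summand equals $R(\RotRottSgank)$, so the quotient is isomorphic to $\Lin\B{\Div}{\S,\gan}(\om)$. Theorem \ref{theo:cohomologyinfty} then provides the map $I_{\pi,\RotRott}:B_{\S,\gan,\ell}^{\Div}\mapsto\pi_{\RotRott}\eps^{-1}B_{\S,\gan,\ell}^{\Div}$ as a finite dimensional topological isomorphism onto $\Harm{}{\S,\gat,\gan,\eps}(\om)$. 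Concatenating all four isomorphisms produces the asserted chain.

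For the dimension statement I would simply read off $d_{\om,\gat}=\dim\Harm{}{\S,\gat,\gan,\eps}(\om)$ from Theorem \ref{theo:cohomologyinfty}, which in turn is independent of $\eps$ by Theorem \ref{theo:miniFATzeroorder}(iii); since the cohomology quotients above are isomorphic to this space for every $k\geq0$, the dimension is also independent of $k$. No step should present any serious obstacle, since all the analytic content (regular decompositions, closed ranges, the existence of smooth pre-bases, and the $\eps$-independence of the harmonic dimension) is already in place; the only thing to be careful about is to cite the correct variant of the decompositions and to verify that $I_{\pi,\Div}$ and $I_{\pi,\RotRott}$ are well-defined isomorphisms, which was done in the proofs of Theorem \ref{theo:highorderregdecoinfty} and Theorem \ref{theo:cohomologyinfty}.
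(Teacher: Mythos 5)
Your proposal is correct and follows exactly the route the paper intends: the paper gives no separate proof but introduces the theorem with ``Theorem \ref{theo:highorderregdecoinfty} implies,'' i.e.\ it reads the isomorphisms off the direct decompositions $N(\RotRottSgatk)=R(\symGradgatk)\dotplus\Lin\B{\RotRott}{\S,\gat}(\om)$ and $N(\DivSgank)=R(\RotRottSgank)\dotplus\Lin\B{\Div}{\S,\gan}(\om)$ together with the pre-basis isomorphisms onto $\Harm{}{\S,\gat,\gan,\eps}(\om)$ from Theorem \ref{theo:cohomologyinfty}, exactly as you do.
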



\bibliographystyle{plain} 
\bibliography{/Users/paule/GoogleDriveData/Tex/input/bibTex/psz}

\begin{thebibliography}{10}

\bibitem{AH2020a}
D.N. Arnold and K.~Hu.
\newblock Complexes from complexes.
\newblock {\em arXiv}, https://arxiv.org/abs/2005.12437v1, 2020.

\bibitem{BPS2016a}
S.~Bauer, D.~Pauly, and M.~Schomburg.
\newblock The {M}axwell compactness property in bounded weak {L}ipschitz
  domains with mixed boundary conditions.
\newblock {\em SIAM J. Math. Anal.}, 48(4):2912--2943, 2016.

\bibitem{BPS2018a}
S.~Bauer, D.~Pauly, and M.~Schomburg.
\newblock {W}eck's selection theorem: The {M}axwell compactness property for
  bounded weak {L}ipschitz domains with mixed boundary conditions in arbitrary
  dimensions.
\newblock {\em arXiv, https://arxiv.org/abs/1809.01192}, 2018.

\bibitem{BPS2019a}
S.~Bauer, D.~Pauly, and M.~Schomburg.
\newblock {W}eck's selection theorem: The {M}axwell compactness property for
  bounded weak {L}ipschitz domains with mixed boundary conditions in arbitrary
  dimensions.
\newblock {\em Maxwell's Equations: Analysis and Numerics (Radon Series on
  Computational and Applied Mathematics), De Gruyter}, 2019.

\bibitem{J1997a}
F.~Jochmann.
\newblock A compactness result for vector fields with divergence and curl in
  ${L}^q({\Omega})$ involving mixed boundary conditions.
\newblock {\em Appl. Anal.}, 66:189--203, 1997.

\bibitem{NPW2015a}
P.~Neff, D.~Pauly, and K.-J. Witsch.
\newblock {P}oincar\'e meets {K}orn via {M}axwell: {E}xtending {K}orn's first
  inequality to incompatible tensor fields.
\newblock {\em J. Differential Equations}, 258(3):1267--1302, 2015.

\bibitem{P2017a}
D.~Pauly.
\newblock On the {M}axwell constants in 3{D}.
\newblock {\em Math. Methods Appl. Sci.}, 40(2):435--447, 2017.

\bibitem{P2019b}
D.~Pauly.
\newblock A global div-curl-lemma for mixed boundary conditions in weak
  {L}ipschitz domains and a corresponding generalized {$\rm A_{0}^{*}$}-{$\rm
  A_{1}$}-lemma in {H}ilbert spaces.
\newblock {\em Analysis (Berlin)}, 39:33--58, 2019.

\bibitem{P2019a}
D.~Pauly.
\newblock On the {M}axwell and {F}riedrichs/{P}oincare constants in {ND}.
\newblock {\em Math. Z.}, 293(3):957--987, 2019.

\bibitem{P2020a}
D.~Pauly.
\newblock Solution theory, variational formulations, and functional a
  posteriori error estimates for general first order systems with applications
  to electro-magneto-statics and more.
\newblock {\em Numer. Funct. Anal. Optim.}, 41(1):16--112, 2020.

\bibitem{PS2021a}
D.~Pauly and M.~Schomburg.
\newblock {H}ilbert complexes with mixed boundary conditions -- {P}art 1: {D}e
  {R}ham complex.
\newblock {\em arXiv}, https://arxiv.org/abs/2106.03448, 2021.

\bibitem{PW2020a}
D.~Pauly and M.~Waurick.
\newblock The index of some mixed order {D}irac-type operators and generalised
  {D}irichlet-{N}eumann tensor fields.
\newblock {\em arXiv}, https://arxiv.org/abs/2005.07996, 2020.

\bibitem{PZ2016a}
D.~Pauly and W.~Zulehner.
\newblock On closed and exact {G}rad-grad- and div-{D}iv-complexes,
  corresponding compact embeddings for tensor rotations, and a related
  decomposition result for biharmonic problems in 3{D}.
\newblock {\em arXiv}, https://arxiv.org/abs/1609.05873, 2016.

\bibitem{PZ2020a}
D.~Pauly and W.~Zulehner.
\newblock The div{D}iv-complex and applications to biharmonic equations.
\newblock {\em Appl. Anal.}, 99(9):1579--1630, 2020.

\bibitem{PZ2020b}
D.~Pauly and W.~Zulehner.
\newblock The elasticity complex: Compact embeddings and regular
  decompositions.
\newblock {\em arXiv}, https://arxiv.org/abs/2001.11007, 2020.

\bibitem{P1984a}
R.~Picard.
\newblock An elementary proof for a compact imbedding result in generalized
  electromagnetic theory.
\newblock {\em Math. Z.}, 187:151--164, 1984.

\bibitem{PWW2001a}
R.~Picard, N.~Weck, and K.-J. Witsch.
\newblock Time-harmonic {M}axwell equations in the exterior of perfectly
  conducting, irregular obstacles.
\newblock {\em Analysis (Munich)}, 21:231--263, 2001.

\bibitem{W1980a}
C.~Weber.
\newblock A local compactness theorem for {M}axwell's equations.
\newblock {\em Math. Methods Appl. Sci.}, 2:12--25, 1980.

\bibitem{W1974a}
N.~Weck.
\newblock {M}axwell's boundary value problems on {R}iemannian manifolds with
  nonsmooth boundaries.
\newblock {\em J. Math. Anal. Appl.}, 46:410--437, 1974.

\bibitem{W1993a}
K.-J. Witsch.
\newblock A remark on a compactness result in electromagnetic theory.
\newblock {\em Math. Methods Appl. Sci.}, 16:123--129, 1993.

\end{thebibliography}


\appendix

\section{Elementary Formulas}
\label{app:sec:formulas}

From \cite{PZ2016a,PZ2020a,PZ2020b} and \cite{PW2020a} we 
have the following collection of formulas related to the elasticity and 
the biharmonic complex.

\begin{lem}[{\cite[Lemma 12.10]{PW2020a}}]
\label{lem:PZformulalem}
Let $u$, $v$, $w$, and $S$ belong to $\C{\infty}{}(\reals^{3})$.
\begin{itemize}
\item
$(\spn v)\,w=v\times w=-(\spn w)\,v$ 
\quad and  
\quad
$(\spn v)(\spn^{-1}S)=-Sv$, if $\sym S=0$
\item
$\sym\spn v=0$
\quad and \quad
$\dev(u\id)=0$
\item
$\tr\Grad v=\div v$
\quad and \quad
$2\skw\Grad v=\spn\rot v$
\item
$\Div(u\id)=\grad u$
\quad and \quad
$\Rot(u\id)=-\spn\grad u$,\\
in particular, \quad $\rot\Div(u\id)=0$ 
\quad and \quad $\rot\spn^{-1}\Rot(u\id)=0$\\
and \quad $\sym\Rot(u\id)=0$
\item
$\Div\spn v=-\rot v$
\quad and \quad
$\Div\skw S=-\rot\spn^{-1}\skw S$,\\
in particular, \quad $\div\Div\skw S=0$
\item
$\Rot\spn v=(\div v)\id-(\Grad v)^{\top}$\\
and \quad $\Rot\skw S=(\div\spn^{-1}\skw S)\id-(\Grad\spn^{-1}\skw S)^{\top}$
\item
$\dev\Rot\spn v=-(\dev\Grad v)^{\top}$
\item
$-2\Rot\sym\Grad v=2\Rot\skw\Grad v=-(\Grad\rot v)^{\top}$
\item
$2\spn^{-1}\skw\Rot S=\Div S^{\top}-\grad\tr S=\Div\big(S-(\tr S)\id\big)^{\top}$,\\
in particular, \quad $\rot\Div S^{\top}=2\rot\spn^{-1}\skw\Rot S$\\
and \quad $2\skw\Rot S=\spn\Div S^{\top}$, if $\tr S=0$
\item
$\tr\Rot S=2\div\spn^{-1}\skw S$,
\quad in particular, \quad $\tr\Rot S=0$, if $\skw S=0$,\\
and \quad $\tr\Rot\sym S=0$ \quad and \quad $\tr\Rot\skw S=\tr\Rot S$
\item
$2(\Grad\spn^{-1}\skw S)^{\top}=(\tr\Rot\skw S)\id-2\Rot\skw S$
\item
$3\Div(\dev\Grad v)^{\top}=2\grad\div v$
\item
$2\Rot\sym\Grad v=-2\Rot\skw\Grad v=-\Rot\spn\rot v=(\Grad\rot v)^{\top}$
\item
$2\Div\sym\Rot S=-2\Div\skw\Rot S=\rot\Div S^{\top}$
\item
$\Rot(\Rot\sym S)^{\top}=\sym\Rot(\Rot S)^{\top}$
\item
$\Rot(\Rot\skw S)^{\top}=\skw\Rot(\Rot S)^{\top}$
\end{itemize}
All formulas extend to distributions as well.
\end{lem}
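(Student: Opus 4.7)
\medskip

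The plan is to verify each of the listed identities by direct pointwise computation, exploiting the linear structure of the spin map and the row-wise action of the tensorial differential operators. Since all fields are smooth on $\reals^{3}$, every identity reduces to an equality of smooth functions (scalar, vector, or tensor valued), so it suffices to check it in coordinates. The distributional extension at the end then follows automatically by duality against test fields.

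First I would fix the conventions: $\spn:\reals^{3}\to\reals^{3\times3}_{\text{skw}}$ is the linear isomorphism sending $v=(v_{1},v_{2},v_{3})$ to the skew matrix with $(\spn v)_{ij}=-\eps_{ijk}v_{k}$, and $\Grad$, $\Rot$, $\Div$ act \emph{row-wise} as $\grad$, $\rot$, $\div$ on tensor fields. With this dictionary, the purely algebraic items in the first block (those stating $(\spn v)w=v\times w$, $\sym\spn v=0$, $\dev(u\id)=0$, $\tr\Grad v=\div v$, $(\spn v)(\spn^{-1}S)=-Sv$ for $\sym S=0$) are immediate from writing out the matrix entries; the skew-case identity $(\spn v)(\spn^{-1}S)=-Sv$ is just the double-cross-product identity $v\times(w\times\cdot)$ read through $\spn$.

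Next I would handle the first-order identities by treating both sides as tensor fields and computing one entry at a time, using the row-wise convention. For example, $\Rot(u\id)=-\spn\grad u$ follows because $\rot(u e_{i})$ has components given by $\eps_{ijk}\p_{j}u$, which matches the $i$-th row of $-\spn\grad u$; the identities $\Div(u\id)=\grad u$ and $\Div\spn v=-\rot v$ are similar. The formula $2\skw\Grad v=\spn\rot v$ is again a component identity $\p_{i}v_{j}-\p_{j}v_{i}=-\eps_{ijk}(\rot v)_{k}$. Once these are in hand, the more elaborate identities such as $\Rot\spn v=(\div v)\id-(\Grad v)^{\top}$, $2\spn^{-1}\skw\Rot S=\Div S^{\top}-\grad\tr S$, and $\tr\Rot S=2\div\spn^{-1}\skw S$ are obtained by the same component check: expand $\Rot$ row-wise, then read off the symmetric, skew, trace, or deviatoric part. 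The identities that assume $\sym S=0$, $\skw S=0$, or $\tr S=0$ follow by substituting the corresponding decomposition $S=\sym S+\skw S$ or $S=\dev S+\tfrac{1}{3}(\tr S)\id$ into the general formula just proved.

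Finally, the second-order and composite identities, namely the "$\rot\Div(u\id)=0$", "$\div\Div\skw S=0$", "$\Rot\sym\Grad v=\dots$", "$\Div\sym\Rot S=\dots$", and the last two $\Rot(\Rot\cdots)^{\top}$ identities, are produced by chaining the first-order rules already established, together with $\rot\grad=0$ and $\div\rot=0$ from the classical vector-proxy de Rham complex. For instance, $2\Rot\sym\Grad v=-\Rot\spn\rot v$ follows by combining $2\skw\Grad v=\spn\rot v$ with $\Rot\Grad v=0$ (row-wise $\rot\grad=0$), and then applying $\Rot\spn(\rot v)=(\div\rot v)\id-(\Grad\rot v)^{\top}=-(\Grad\rot v)^{\top}$. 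The two $\Rot(\Rot\cdots)^{\top}$ identities are a matter of observing that $\sym$ and $\skw$ commute with the outer row-wise $\Rot$ because they are algebraic, $\reals$-linear projections on the $3\times3$-matrix fibre that preserve the row-wise structure after the appropriate transpose; the details reduce to bookkeeping of indices.

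The only real obstacle is notational: the statement mixes transposes, row-wise versus column-wise actions, and decompositions into $\sym/\skw$ and $\dev/\tr$, so care is needed to line up which side of an $S$ or $\Grad v$ receives the $\top$ in each line. Once a single consistent index convention is fixed, every identity collapses to a short computation with $\eps_{ijk}$, and the extension to distributions is obtained by pairing with $\phi\in\C{\infty}{\text{c}}(\reals^{3})$ and pushing all derivatives onto the test field.
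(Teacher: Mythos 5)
The paper does not actually prove this lemma --- it is quoted from \cite[Lemma 12.10]{PW2020a} --- and your plan of fixing the row-wise conventions, verifying each identity entrywise with $\varepsilon_{ijk}$, chaining the first-order rules (plus $\rot\grad=0$, $\div\rot=0$) for the composite ones, and extending to distributions by duality is precisely the elementary computation the cited source carries out, so the approach is correct and essentially the same. Only mind the sign bookkeeping you yourself flag: with $(\spn v)_{ij}=-\varepsilon_{ijk}v_{k}$ and the row-wise convention $(\Grad v)_{ij}=\p_{j}v_{i}$, the component form of $2\skw\Grad v=\spn\rot v$ reads $\p_{j}v_{i}-\p_{i}v_{j}=-\varepsilon_{ijk}(\rot v)_{k}$, i.e.\ the transpose of the parenthetical version you wrote, though this slip does not affect the validity of the identity or of your overall argument.
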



\vspace*{5mm}
\hrule
\vspace*{3mm}


\end{document}